\newtheorem{teor}{Theorem}[section]
\newtheorem{lema}[teor]{Lemma}
\newtheorem{prop}[teor]{Proposition}
\theoremstyle{definition}
\newtheorem{eje}[teor]{Example}
\newtheorem{nota}[teor]{Remark}
\newtheorem{notas}[teor]{Remarks}
\numberwithin{equation}{section}
\newcommand{\R}{\mathbb{R}}
\newcommand{\A}{\mathbb{A}}
\newcommand{\B}{\mathbb{B}}
\newcommand{\dd}{\textsf{d}}
\newcommand{\wit}{\widetilde}
\newcommand{\lsm}{\left[\!\begin{smallmatrix}}
\newcommand{\rsm}{\end{smallmatrix}\!\right]}
\newcommand{\des}{\displaystyle}
\DeclareMathOperator{\Int}{Int} 
\DeclareMathOperator{\supp}{supp}
\begin{document}
\title[Linear-dissipative and purely dissipative parabolic PDEs]{Non-autonomous scalar linear-dissipative and purely dissipative parabolic PDEs over a compact base flow}
\author[R. Obaya]{Rafael Obaya}
\author[A.M. Sanz]{Ana M. Sanz}
\address[R. Obaya]{Departamento de Matem\'{a}tica
Aplicada, E. Ingenier\'{\i}as Industriales, Universidad de Valladolid,
47011 Valladolid, Spain, and member of IMUVA, Instituto de Investigaci\'{o}n en
Matem\'{a}ticas, Universidad de Valladolid, Spain.}
 \email{rafoba@wmatem.eis.uva.es}
\address[A.M. Sanz]{Departamento de Did\'{a}ctica de las Ciencias Experimentales, Sociales y de la Matem\'{a}tica,
Facultad de Educaci\'{o}n, Universidad de Valladolid, 34004 Palencia, Spain,
and member of IMUVA, Instituto de Investigaci\'{o}n en  Mate\-m\'{a}\-ti\-cas, Universidad de
Valladolid.} \email{anasan@wmatem.eis.uva.es}
\thanks{Both authors were partly supported by FEDER Ministerio de Econom\'{\i}a y Competitividad grants
MTM2015-66330-P and RTI2018-096523-B-I00 and by Universidad de Valladolid under project PIP-TCESC-2020}
\date{}
\begin{abstract}
In this paper a family of non-autonomous scalar parabolic PDEs over a general compact and connected flow is considered. The existence or not of a neighbourhood of  zero where the problems are linear has an influence on the methods used and on the dynamics of the induced skew-product semiflow. That is why two cases are distinguished:  linear-dissipative and purely dissipative problems. In both cases, the structure of the global and pullback attractors is studied using principal spectral theory. Besides, in the purely dissipative setting, a simple condition is given, involving both the underlying linear dynamics and some properties of the nonlinear term, to determine the nontrivial sections of the attractor.   \end{abstract}
\keywords{Non-autonomous dynamical systems; global and cocycle attractors; linear-dissipative PDEs; purely dissipative PDEs; Li-Yorke chaos}
\subjclass{37B55, 35K57, 37L30}
\renewcommand{\subjclassname}{\textup{2010} Mathematics Subject Classification}
\maketitle
\section{Introduction}\label{sec-intro}\noindent
In this paper we investigate a family of time-dependent scalar linear-dissipative parabolic PDEs over a continuous flow on a compact metric space $(P,\sigma,\R)$, with Neumann, Robin or Dirichlet boundary conditions, of the form
\begin{equation*}
\left\{\begin{array}{l} \des\frac{\partial y}{\partial t}  =
 \Delta \, y+h(p{\cdot}t,x)\,y+g(p{\cdot}t,x,y)\,,\quad t>0\,,\;\,x\in U, \;\, \text{for each}\; p\in P,
  \\[.2cm]
By:=\bar\alpha(x)\,y+\kappa\,\des\frac{\partial y}{\partial n} =0\,,\quad  t>0\,,\;\,x\in \partial U,\,
\end{array}\right.
\end{equation*}
where $p{\cdot}t=\sigma_t(p)$, $h$ denotes the coefficient of the linear part of the problems, which is continuous, and $g$ is a smooth dissipative term with $g(p,x,0)=\frac{\partial g}{\partial y}(p,x,0)=0$   for every $p \in P $ and $x \in {\bar U}$. We assume that $g$ is dominant with respect to the linear term for $y$ large enough and that there is a constant $r_0\geq 0$ such that $g(p,x,y)=0$ if and only if $|y| \leq r_0$,  which determines the structure near zero. The long-term behaviour of the solutions is studied taking into account the internal dynamics of $P$. This dynamical formulation  is similar to the one in  Mierczy{\'n}ski and Shen~\cite{mish08} to investigate deterministic time-dependent parabolic equations, with applications to Kolmogorov models.
\par
We consider standard regularity assumptions which provide existence, uniqueness, existence in the large and continuous dependence of mild solutions with respect to initial data. Then, mild solutions of the abstract Cauchy problems (ACPs for short) associated to the linear-dissipative equations generate a global skew-product semiflow $\tau$ on $P \times X$, where $X=C(\bar U)$ for Neumann and Robin boundary conditions and $X=C_0(\bar U)$ in the Dirichlet case.
\par
Although the term {\em linear-dissipative\/}   is in general applied to the complete family of the previous parabolic equations for all the values of $r_0\geq 0$, in this paper and for the sake of distinction, we will refer to the {\em linear-dissipative\/} case provided that $r_0>0$,  which implies that there is a zone around $0$ where the problems are in fact linear and a zone away from $0$ where the dissipative term comes into play, whereas the term {\em purely dissipative\/} will be employed whenever $r_0=0$,   in order to emphasize the fact that the dissipative term is always active and the problems are strictly nonlinear in every neighbourhood of zero.
\par
The global dynamics of the semiflow $\tau$ is dissipative, which implies the existence of a global attractor $\A$.
Due to the monotonicity of $\tau$, $\A$ has lower and upper boundaries which can be identified with the graphs of two semicontinuous functions $a$ and $b$, respectively. Assuming for simplicity that $g(p,x,y)$ is odd with respect to $y$,  $a=-b$ and  the global attractor can be represented  as $\A=\cup_{p \in P} \{p\}\times A(p)$, with sections $A(p) \subseteq [-b(p),b(p)]$ for $p \in P$. It turns out that for each $p \in P$ the parametrized set $\{A(p{\cdot}t)\}_{t \in \R}$ is the pullback attractor of the process $S_p(\cdot,\cdot)$ generated  by the solutions of the non-autonomous parabolic equation obtained by the evaluation  of the coefficients along the trajectory of $p$. The parametric family of compact sets $\{A(p)\}_{p \in P}$ takes the name of cocycle (or pullback) attractor of the semiflow. Some nice references for attractors in non-autonomous dynamics are Carvalho et al.~\cite{calaro} and Kloeden and Rasmussen~\cite{klra}.
\par
Since the underlying linear dynamics is decisive in the description of the attractor, we also consider the global linear skew-product semiflow $\tau_L$ induced by the solutions of the linear part of the parabolic problems.  A crucial fact in this theory is that $\tau_L$ is strongly positive in $P \times X^\gamma$, where $X^\gamma=C({\bar U})$  for Neumann and Robin boundary conditions and $X^\gamma=X^\alpha$, a fractional power space, in the case of Dirichlet boundary conditions.  In consequence, $\tau_L$ admits a continuous separation $X^\gamma=X_1(p)\oplus X_2(p)$ for  $p \in P$, in the terms stated in Pol\'{a}\v{c}ik and Tere\v{s}\v{c}\'{a}k~\cite{pote} and Shen and Yi~\cite{shyi}. The restriction of $\tau_L$ to the principal bundle $\cup_{p \in P}\{p\} \times X_1(p)$ defines a continuous one-dimensional (1-dim for short) linear flow determined by a 1-dim linear cocycle $c(t,p)$ for $t \in \R$ and $p \in P$, and its continuous spectrum  is called the {\em principal spectrum\/}. The 1-dim cocycle $c(t,p)$  determines  the asymptotic behaviour of the positive solutions of $\tau_L$ and plays an important role in the description of both topological and ergodic properties of $\tau$ on the global attractor.
\par
We investigate the structure and internal dynamics of the global and cocycle attractors of $\tau$ in terms of the principal spectrum of $\tau_L$.
These questions were initiated in Cardoso et al.~\cite{cardoso}, Caraballo et al.~\cite{calaobsa} and Langa et al.~\cite{laos} when the flow in $P$ is minimal and uniquely ergodic, and the principal spectrum is thus a singleton. Although purely dissipative problems are also considered, these references extensively cover the linear-dissipative case when the flow on $P$ is aperiodic, showing ingredients of high complexity or chaotic dynamics that cannot occur when the equations are autonomous or periodic.
In the present paper,  for both the linear-dissipative and the purely dissipative settings we give a version of the theory valid for a general continuous flow on a compact base $P$. In particular we describe the dynamical complexity when several ergodic measures or minimal subsets coexist in $P$.  Besides, for a very general class of purely dissipative problems,  we determine integrability conditions on the cocycle $c(t,p)$ on $(-\infty,0]$ for a fixed $p \in P$ which imply that the section of the attractor $A(p)$ is nontrivial. Similar conditions are necessary for $A(p)\not=\{0\}$ with Neumann or Robin boundary conditions, under some spectral restrictions. The conclusions in the purely dissipative case are compatible with the presence of ergodic measures with null Lyapunov exponent and are new also when the flow on the base $P$ is minimal or even almost periodic.
\par
The paper is organized as follows. Section~\ref{sec-preli} compiles some basic notions in the field of monotone non-autonomous dynamical systems. In Section~\ref{sect-the problem} the family of  PDEs problems under study is detailed,  and known results on the existence of a global attractor for the induced skew-product semiflows are reviewed. Also, some general results are given for the boundary map $b$ and its null set. Some aspects of the linear semiflow $\tau_L$ are treated in Section~\ref{sec-lineal}. In particular, the principal spectrum and the 1-dim linear cocycle $c(t,p)$ are introduced, which have leading roles in the paper. In order to have that the principal spectrum is a possibly degenerate compact interval $\Sigma_{\text{pr}}=[\alpha_P,\lambda_P]$ on the real line, in the rest of the paper we assume that the compact base $P$ is also connected.
\par
In Section~\ref{sect-linear dissipative} we consider linear-dissipative problems, that is, $r_0>0$. One advantage in this case is that $b(p)\gg 0$ is characterized by the boundedness of  $c(t,p)$ for negative times. A description of the map $b$ is given depending on the location of the principal spectrum $\Sigma_{\text{pr}}$ on the real line. The only chance for a trivial attractor happens for $\lambda_P<0$.   If $\Sigma_{\text{pr}}=[\alpha_P,0]$ with $\alpha_P \leq 0$, there is some $p \in P$ with $b(p)\gg 0$. The set of these points can even have measure $1$ for some ergodic ergodic measure with null Lyapunov exponent, giving rise to a big subset of the phase space with chaotic dynamics. If $\Sigma_{\text{pr}}=[0,\lambda_P]$ with $\lambda_P \geq 0$,  the previous chaotic dynamics can also appear
 and in addition every minimal set $M$ in $P$ has some point $p\in M$ with $b(p)\gg 0$. Besides, if $\lambda_P>0$, $b$ is almost always strongly positive with respect to an ergodic measure and, if $\alpha_P>0$, $b$ is uniformly strongly positive.
Last but not least, when the nonlinear term $g$ is strictly sublinear in the zone $y>r_0$, a more detailed dynamical description  is obtained and, with $\alpha_P>0$, $b$ is continuous.
\par
The purely dissipative case, with $r_0=0$, is treated in Section~\ref{sect-purely dissipative}. The internal dynamics of the attractor is described once more according to the location of $\Sigma_{\text{pr}}$. Also now further information is obtained under a strictly sublinear condition: we note that some of the results had been proved by Mierczy{\'n}ski and Shen~\cite{mish04, mish08} for Kolmogorov models. But in fact the most important result in the paper is Theorem~\ref{teor-atractor no trivial}, determining a sufficient (and many times necessary) integrability condition on the 1-dim cocycle $c(t,p)$ for $t\leq 0$ to guarantee that $b(p)\gg 0$, eventually applicable to a very general class of problems in Theorem~\ref{teor-coro}. One can find in the literature  classical examples  by Poincar\'{e}~\cite{poin}, Conley and Miller~\cite{comi}, Zhikov and Levitan~\cite{zhle} or Johnson and Moser~\cite{jomo} of almost periodic maps satisfying the cited integrability condition.  Finally, Section~\ref{sec-examples} offers a collection of simple examples to illustrate different dynamical behaviours in the attractor.
%%%%%%%%%%%%%%%%%%%%%%%%%%%%%%%%%%%%%%%%%%%%%%%%%%%%%%%%%%%%%%%%%%%%%%%%%%%%%%%%%%%%%%%%%%
\section{Basic notions}\label{sec-preli}\noindent
In this section we include some preliminaries about
topological dynamics for non-autonomous dynamical systems.
\par
Let $(P,d)$ be a compact
metric space. A real {\em continuous flow\/} $(P,\sigma,\R)$ is
defined by a continuous map $\sigma: \R\times P \to  P,\;
(t,p)\mapsto \sigma(t,p)=\sigma_t(p)$ satisfying
\begin{enumerate}
\renewcommand{\labelenumi}{(\roman{enumi})}
\item $\sigma_0=\text{Id},$
\item $\sigma_{t+s}=\sigma_t\circ\sigma_s$ for each $s$, $t\in\R$\,.
\end{enumerate}
The set $\{ \sigma_t(p) \mid t\in\R\}$ is called the {\em orbit\/}
of the point $p$. A point $p$ is called {\em topologically transitive\/} if its orbit is dense in $P$. We say that a subset $P_1\subset P$ is  {\em invariant\/} if $\sigma_t(P_1)=P_1$ for every $t\in\R$.
The flow $(P,\sigma,\R)$ is called {\em minimal\/} if it does not contain properly any other
compact invariant set, or equivalently,  if every
orbit is dense.
The flow is {\em distal\/} if the orbits of  any two distinct
points $p_1,\,p_2\in P$  keep at a positive distance,
that is, $\inf_{t\in \R}d(\sigma(t,p_1),\sigma(t,p_2))>0$; and it
is {\em almost periodic\/} if the family of maps $\{\sigma_t\}_{t\in \R}:P\to P$ is uniformly equicontinuous. An almost periodic flow is always distal.
\par
A finite regular measure defined on the Borel sets of $P$ is called
a Borel measure on $P$. Given $\mu$ a normalized Borel measure on
$P$, it is  {\em invariant\/} for the flow $\sigma$ if $\mu(\sigma_t(P_1))=\mu(P_1)$ for every Borel subset
$P_1\subset P$ and every $t\in \R$. It is {\em ergodic\/}  if, in
addition, $\mu(P_1)=0$ or $\mu(P_1)=1$ for every
invariant Borel subset $P_1\subset P$. An invariant Borel set $P_1\subseteq P$ is {\em of complete measure\/} if $\mu(P_1)=1$ for every ergodic measure $\mu$. If that is the case, then $\mu(P_1)=1$ for every invariant measure.
$(P,\sigma,\R)$ is {\em uniquely ergodic\/} if it has a
unique normalized invariant measure, which is then necessarily
ergodic. A minimal and almost periodic flow $(P,\sigma,\R)$ is uniquely ergodic. For simplicity, we will denote the flow by $\sigma_t(p)=p{\cdot}t$ for $p\in P$, $t\in\R$.
\par
A standard method to, roughly speaking, get rid of the time variation in a non-autonomous equation and build a non-autonomous dynamical system, is the so-called {\em hull\/} construction. More precisely, a function $f\in C(\R\times\R^m)$ is said to be {\em admissible\/} if for any
compact set $K\subset \R^m$, $f$ is bounded and uniformly continuous
on $\R\times K$. Provided that $f$ is admissible, its {\em hull\/} $P$ is the closure for the compact-open topology of the set of $t$-translates of $f$, $\{ f_t \mid t\in\R\}$ with $f_t(s,x)=f(t+s,x)$
for $s\in \R$ and $x\in\R^m$. The translation map $\R\times P\to P$,
$(t,p)\mapsto p{\cdot}t$ given by $p{\cdot}t(s,x)= p(s+t,x)$, $s\in \R$ and $x\in\R^m$, defines a
continuous flow on the compact metric space $P$. This flow is minimal as far as the map $f$ has certain recurrent behaviour in time, such as periodicity, almost periodicity, or other weaker properties of recurrence. If the map $f(t,x)$ is uniformly almost periodic (that is, it is admissible and almost periodic in $t$ for any fixed $x$), then the flow on the hull is minimal and almost periodic, and thus uniquely ergodic. It is aperiodic whenever $f$ is not time-periodic. %This is how an almost periodic equation is brought into the abstract context of this paper.
\par
Let $\R_+=\{t\in\R\,|\,t\geq 0\}$. Given a continuous compact flow $(P,\sigma,\R)$ and a
complete metric space $(X,\dd)$, a continuous {\em skew-product semiflow\/} $(P\times
X,\tau,\,\R_+)$ on the product space $P\times X$ is determined by a continuous map
\begin{equation*}
 \begin{array}{cccl}
 \tau \colon  &\R_+\times P\times X& \longrightarrow & P\times X \\
& (t,p,x) & \mapsto &(p{\cdot}t,u(t,p,x))
\end{array}
\end{equation*}
 which preserves the flow on $P$, called the {\em base flow\/}.
 The semiflow property means:
\begin{enumerate}
\renewcommand{\labelenumi}{(\roman{enumi})}
\item $\tau_0=\text{Id},$
\item $\tau_{t+s}=\tau_t \circ \tau_s$ for  $t,s\geq 0,$
\end{enumerate}
where again $\tau_t(p,x)=\tau(t,p,x)$ for each $(p,x) \in P\times X$ and $t\in \R_+$.
This leads to the so-called (nonlinear) semicocycle property,
\begin{equation*}
 u(t+s,p,x)=u(t,p{\cdot}s,u(s,p,x))\quad\mbox{for $t,s\ge 0$ and $(p,x)\in P\times X$}.
\end{equation*}
\par
The set $\{ \tau(t,p,x)\mid t\geq 0\}$ is the {\em semiorbit\/} of
the point $(p,x)$. A subset  $K$ of $P\times X$ is {\em positively
invariant\/} if $\tau_t(K)\subseteq K$
for all $t\geq 0$ and it is $\tau$-{\em invariant\/} if $\tau_t(K)= K$
for all $t\geq 0$.  A compact $\tau$-invariant set $K$ for the
semiflow  is {\em minimal\/} if it does not contain any nonempty
compact $\tau$-invariant set  other than itself.
\par
A compact $\tau$-invariant set $K\subset P\times X$ is called a {\em pinched\/} set if there exists a residual set $P_0\subsetneq P$ such that for every $p\in P_0$ there is a unique element in $K$ with $p$ in the first component, whereas there are more than one if $p\notin P_0$.
\par
The reader can find in  Ellis~\cite{elli}, Sacker and
Sell~\cite{sase94}, Shen and Yi~\cite{shyi} and references therein, a
more in-depth survey on topological dynamics.
\par
We now state the definitions of global and cocycle (or pullback) attractors for skew-product semiflows. The books by    Carvalho et al.~\cite{calaro} and Kloeden and Rasmussen~\cite{klra}  are  good references for this topic.
We say that the skew-product semiflow $\tau$ has a {\em global attractor\/} if there exists an invariant compact set attracting bounded sets forwards in time, i.e., there is a compact set $\A\subset P\times X$ with $\tau_t(\A) = \A$ for any $t\geq 0$ and $\lim_{t\to\infty} {\rm dist}_{P\times X}(\tau_t(\B),\A)=0$ for any bounded set $\B\subset P\times X$, for the Hausdorff semidistance ${\rm dist}$ on $P\times X$.
\par
Besides,  provided that $P$ is compact, if $\A$ is the global attractor, the non-autonomous set $\{A(p)\}_{p\in P}$, with $A(p)=\{z\in X\mid (p,z)\in \A\}$ for each $p\in P$, is a {\em cocycle attractor\/} (or a {\em pullback attractor\/}). This means that $\{A(p)\}_{p\in P}$ is compact, invariant and it pullback attracts all bounded subsets $B\subset X$, that is, $\lim_{t\to\infty} {\rm dist}_X(u(t,p{\cdot}(-t),B),A(p))=0$ for any $p\in P$ (see~\cite{klra}).
\par
To finish, we include some basic notions on monotone skew-product semiflows. When the state space $X$ is a strongly ordered Banach space, that is, there is a closed convex cone of nonnegative vectors $X_+$ with a nonempty interior, then, a (partial) {\em strong order relation\/} on $X$ is
defined by
\begin{equation}\label{order}
\begin{split}
 x\le y \quad &\Longleftrightarrow \quad y-x\in X_+\,;\\
 x< y  \quad &\Longleftrightarrow \quad y-x\in X_+\;\text{ and }\;x\ne y\,;
\\  x\ll y \quad &\Longleftrightarrow \quad y-x\in \Int X_+\,.\qquad\quad\quad~
\end{split}
\end{equation}
In this situation, the skew-product semiflow $\tau$
is {\em monotone\/} if $u(t,p,x)\le u(t,p,y)$ for $t\ge 0$, $p\in P$ and  $x,y\in X$ with $x\le y$; and it is {\em strongly monotone\/} if besides, $u(t,p,x)\ll u(t,p,y)$ for $t>0$ and $p\in P$,  provided that $x< y$.
\par
A Borel map $a:P\to X$ such that $u(t,p,a(p))$ exists for any $t\geq 0$ is said to be
\begin{itemize}
\item[(a)] an {\em equilibrium\/} if $a(p{\cdot}t)=u(t,p,a(p))$ for any $p\in P$ and $t\geq 0$;
\item[(b)] a {\em sub-equilibrium\/} if $a(p{\cdot}t)\leq u(t,p,a(p))$ for any $p\in P$ and $t\geq 0$;
\item[(c)] a {\em super-equilibrium\/} if $a(p{\cdot}t)\geq u(t,p,a(p))$ for any $p\in P$ and $t\geq 0$.
\end{itemize}
The study of semicontinuity properties of these maps and other related issues can be found in Novo et al.~\cite{nono2}.
%%%%%%%%%%%%%%%%%%%%%%%%%%%%%%%%%%%%%%%%%%%%%%%%%%%%%%%%%%%%%%%%%%%%%%%%%%%%%%%%%%%%%%%%%%%%%%%%%%%%%%%%%%%%%%%
%%%%%%%%%%%%%%%%%%%%%%%%%%%%%%%%%%%%%%%%%%%%%%%%%%%%%%%%%%%%%%%%%%%%%%%%%%%%%%%%%%%%%%%%%%%%%%%%%%%%%%%%%%%%%%%%
\section{The setting of the problem and some general results}\label{sect-the problem}\noindent
This section contains the description of the parabolic problems under study. Some known results for them are reviewed and  some new general results are given.
\par
In this paper we consider a family of time-dependent scalar linear-dissipative parabolic PDEs over a continuous flow on a compact metric space $(P,\sigma,\R)$, with Neumann, Robin or Dirichlet boundary conditions, given for each $p\in P$ by
\begin{equation}\label{pdefamilynl}
\left\{\begin{array}{l} \des\frac{\partial y}{\partial t}  =
 \Delta \, y+h(p{\cdot}t,x)\,y+g(p{\cdot}t,x,y)\,,\quad t>0\,,\;\,x\in U,
  \\[.2cm]
By:=\bar\alpha(x)\,y+\kappa\,\des\frac{\partial y}{\partial n} =0\,,\quad  t>0\,,\;\,x\in \partial U,\,
\end{array}\right.
\end{equation}
where  $p{\cdot}t$ denotes the flow on $P$; $U$, the spatial domain, is a bounded, open and
connected  subset of $\R^m$ ($m\geq 1$) with a sufficiently smooth boundary
$\partial U$; $\Delta$ is the Laplacian operator on $\R^m$; the linear coefficient $h:P\times \bar U\to \R$ is continuous
and the nonlinear term $g:P\times \bar U\times \R\to \R$ is continuous and of class $C^1$ with respect to  $y$ and satisfies the following conditions which in particular render the equations dissipative:
\begin{itemize}
  \item[(c1)] $g(p,x,0)=\des\frac{\partial g}{\partial y}(p,x,0)=0$ for any $p\in  P$ and $x\in \bar U$;
  \item[(c2)] $y\,g(p,x,y)\leq 0$ for any $p\in  P$, $x\in \bar U$ and $y\in \R$;
  \item[(c3)] $\des\lim_{|y|\to\infty}\frac{g(p,x,y)}{y}=-\infty$ uniformly on $P\times\bar U$;
  \item[(c4)] $g(p,x,-y)=-g(p,x,y)$ for any $p\in  P$, $x\in \bar U$ and $y\in \R$;
  \item[(c5)] there exists an $r_0\geq 0$ such that $g(p,x,y)=0$ if and only if $|y|\leq r_0$.
\end{itemize}
\par
The problem has Dirichlet boundary conditions if $\kappa=0$ and $\bar\alpha(x)\equiv 1$; Neumann boundary conditions if $\kappa=1$ and  $\bar\alpha(x) \equiv 0$; and Robin boundary conditions if  $\kappa=1$ and $\bar\alpha:\partial U\to \R$ is a nonnegative  sufficiently regular map. As usual,
$\partial/\partial n$ denotes the
outward normal
derivative at the  boundary.
As it has been indicated in the introduction, we make a distinction between what we call linear-dissipative problems ($r_0>0$) and purely dissipative problems ($r_0=0$) in terms of (c5).
\par
Even if no further mention is made, the former regularity conditions on $h$ and $g$ are always presumed. Nonetheless, sometimes we will assume additional regularity conditions on $g$ with respect to the $y$ variable. Namely, given a $\delta>0$, an integer $n\geq 1$ and a $\beta\in (0,1^-]$ we write $C^{0,0,n+\beta}(P\times \bar U\times [0,\delta])$ for the space of real continuous maps $g$ on $P\times \bar U\times [0,\delta]$ which are $n$ times continuously differentiable with respect to $y$ and the $n^{\rm{th}}$-order partial derivative satisfies that:
\[
\sup\Big\{ \frac{|\frac{\partial^n g}{\partial y^n}(p,x,y_2)-\frac{\partial^n g}{\partial y^n}(p,x,y_1)|}{(y_2-y_1)^\beta} \,\Big|\; p\in P,\,x\in\bar U,\, 0\leq y_1<y_2\leq \delta\Big\}<\infty\,.
\]
Note that $\frac{\partial^n g}{\partial y^n}$  is $\beta$-H\"{o}lder continuous uniformly for $p\in P$ and $x\in\bar U$ if $\beta\in (0,1)$, whereas it is Lipschitz continuous if $\beta=1^-$. The superscript in $1^-$ is just used to distinguish the Lipschitz condition from a $C^1$ character.
\par
Also, on some occasions we will be able to give more complete information by adding a strict sublinearity condition on $g$:
\begin{itemize}
\item[(c6)] $g(p,x,\lambda\,y)< \lambda\,g(p,x,y)$  for $p\in P$,
$x\in\bar U$, $y>r_0$ and $\lambda >1$\,.
\end{itemize}
Note that the sublinear case includes the non-autonomous versions of some classical problems, such as the Chafee-Infante equation (see Chafee and Infante~\cite{chin} and Carvalho et al.~\cite{calaro12}), which in principle is purely dissipative, but we can allow $r_0>0$ to get new linear-dissipative versions of this classical problem. Also some Kolmogorov and Fisher models (see Shen and Yi~\cite{shyi2}) provide a field of application of our results, both in the linear-dissipative and purely dissipative settings.
\par
Assuming only conditions (c1), (c2) and (c3) on the nonlinear term $g$ in the parabolic PDEs~\eqref{pdefamilynl} with Neumann or Robin boundary conditions, plus a minimality condition on the base flow $P$, Cardoso et al.~\cite{cardoso} prove  the existence of a global attractor $\A$ for the induced skew-product semiflow
\begin{equation*}%\label{tau}
\begin{array}{cccl}
 \tau: & \R_+\times P\times X& \longrightarrow & \hspace{0.3cm}P\times X\\
 & (t,p,z) & \mapsto
 &(p{\cdot}t,u(t,p,z))
\end{array}
\end{equation*}
defined by the mild solutions of the associated ACPs in $X=C(\bar U)$ (see also~\cite{calaobsa} for all the details). The semiflow $\tau$ is globally defined because of the boundedness of the solutions, it is strongly monotone, and the section semiflow $\tau_t$ is compact for every $t>0$. All these results remain valid without the minimality condition on $P$, that is, just assuming that  $P$ is compact.
\par
The case of Dirichlet boundary conditions has recently been treated in Langa et al.~\cite{laos}. The reader is referred to this paper for all the details. The appropriate state space to build the skew-product semiflow $\tau$ induced by mild solutions is now $X=C_0(\bar U)$, the set of continuous maps on $\bar U$ vanishing  on the boundary $\partial U$. Once more, under assumptions (c1), (c2) and (c3) there exists a global attractor $\A$ for $\tau$. This is a consequence of Proposition~3.4 in~\cite{laos}, whose proof works under no further conditions on $P$ rather than compactness.
\par
An additional state space is introduced in the Dirichlet case in order to deal with strong monotonicity issues, which is  $X^\alpha$,  a fractional power space associated with the realization of the Laplacian in an $L^p(U)$ space, and in particular satisfies $X^\alpha\hookrightarrow C^1(\bar U)\cap C_0(\bar U)$ under some restrictions, namely, $m<p<\infty$ and $\alpha\in (1/2+m/(2p),1)$. The (partial) strong order in the Banach space $X^\alpha$ is defined as in~\eqref{order} in  association with the cone of positive maps $X^\alpha_+=\{z\in X^\alpha\,\big|\; z(x)\geq 0\;\text{for} \;
x\in\bar U \}$, which has a nonempty interior: %(recall that $X^\alpha \hookrightarrow C^1(\bar U)$)
\begin{equation*}
 \Int X^\alpha_+ =\Big\{z\in X^\alpha_+\,\big|\; z(x)> 0\;\text{for} \;
x\in U \;\text{and}\; \frac{\partial z}{\partial n}(x)< 0 \;\text{for} \;
x\in\partial U   \Big\}\,.
\end{equation*}
As it is, $u(t,p,z)\in X^\alpha$ for $t>0$, $p\in P$ and $z\in X$, the skew-product semiflow $\tau$ is strongly monotone and for each $t>0$ the section semiflow $\tau_t:P\times X\to P\times X^\alpha$ is compact. The global attractor $\A\subset P\times X^\alpha$ and the topologies of $P\times X$ and $P\times X^\alpha$ coincide over $\A$.
\begin{nota}
In order to unify the writing for any kind of boundary conditions, we will consider the Banach spaces $(X,\|\,{\cdot}\,\|)$ and $(X^\gamma,\|\,{\cdot}\,\|_\gamma)$:
\par
- With Neumann or Robin boundary conditions, $X=X^\gamma=C(\bar U)$, always with the sup-norm.\par
- With Dirichlet boundary conditions, $X=C_0(\bar U)$ with the sup-norm, whereas $X^\gamma=X^\alpha$ with the standard $\alpha-$norm $\|\,{\cdot}\,\|_\alpha$.
\end{nota}
One crucial dynamical element is the upper boundary map of the global attractor, that is, the map $b(p):=\sup A(p)$, for the sections $A(p)$ of the attractor $\A$ defined in the natural way, $A(p):=\{z\in X^\gamma\mid (p,z)\in \A\}$ for each $p\in P$. Note that with condition (c4), the lower boundary map $a(p):=\inf A(p)$, $p\in P$ satisfies  $a(p)= -b(p)$, and $\A\subseteq \bigcup_{p\in P} \{p\}\times [-b(p),b(p)]$. That is why the study is centered on $b$. With any kind of boundary conditions, the map $b:P\to X^\gamma$ defines a semicontinuous equilibrium for $\tau$, which is thus continuous over the points in an invariant and residual set of $P$. For convenience, we explain this fact further. The main ideas involved can be found in the work by Chueshov~\cite{chue} in a random setting, and in Novo et al.~\cite{nono2} in a deterministic framework.
Let $e_0\gg 0$ be the eigenfunction associated to the first eigenvalue of the boundary value problem (BVP for short)
\begin{equation}\label{bvp}
\left\{\begin{array}{l}
 \Delta \, u +\lambda\,u = 0\,,\quad x\in U,\\
Bu=0\,,\quad x\in \partial U,
\end{array}\right.
\end{equation}
with $\|e_0\|=1$.
In the case of Neumann or Robin boundary conditions, $b(p)$ is obtained as the decreasing limit of a family of continuous super-equilibria (see Proposition~3 in Cardoso et al.~\cite{cardoso}):
\begin{equation}\label{b(p)}
b(p)=\lim_{t\to\infty} u(t,p{\cdot}(-t),r e_0)\quad\text{for}\;\,p\in P,
\end{equation}
where $r>0$ is big enough; whereas with Dirichlet boundary conditions the construction is a bit more delicate and $b$ is obtained as the decreasing limit of a family of continuous super-equilibria for an associated discrete semiflow:
\begin{equation}\label{b(p)-dir}
b(p):=\lim_{n\to\infty} u(nt_0,p{\cdot}(-nt_0),re_0)\quad\text{for}\;\,p\in P,
\end{equation}
for $r>0$ and $t_0>0$ both big enough. This is done in Proposition~3.5 in Langa et al.~\cite{laos}. In fact, it can be derived from its proof that the limit in~\eqref{b(p)} exists, so that formula~\eqref{b(p)} is also applicable with Dirichlet boundary conditions.
\par
In order to study the upper boundary map $b$, the semiflow can be restricted to the positive cone. Also, in models of applied sciences often only nonnegative solutions  make sense. For these reasons, we state the following result.  Here, the strong monotonicity of $\tau$ is crucial.
\begin{prop}\label{prop-atractor cono positivo}
Assume conditions $(c1)$-$(c3)$ on $g$. The restriction of the semiflow $\tau$ to the  positive cone, that is, to $P\times X_+$, admits a global attractor $\A_+\subseteq \A$ which is contained in $(P\times\{0\}) \cup (P\times \Int X^\gamma_+)$. Besides, $\A$ and $\A_+$ share the upper boundary map $b$.
\end{prop}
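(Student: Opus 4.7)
The plan is to deduce all three claims from the monotonicity and smoothing properties of $\tau$ already in place, using the characterization~\eqref{b(p)} to identify $b$ inside $\A_+$. The starting point is the positive invariance of the cone: since $(c1)$ makes $y\equiv 0$ a solution of~\eqref{pdefamilynl}, we have $u(t,p,0)=0$ for every $(t,p)$, and then monotonicity of $\tau$ gives $u(t,p,z)\geq 0$ whenever $z\in X_+$. Consequently $P\times X_+$ is a closed, positively $\tau$-invariant subset. Restricting $\tau$ to this set inherits both the dissipativity (an absorbing ball for $\tau$ in $P\times X$ intersects $X_+$ in an absorbing set for the restriction) and the compactness of $\tau_t$ for $t>0$. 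Standard existence theory for attractors of eventually compact dissipative semiflows on a closed phase space thus yields a global attractor $\A_+$ for $\tau|_{P\times X_+}$, and because $\A_+$ is compact and $\tau$-invariant it is attracted by $\A$, forcing $\A_+\subseteq\A$.

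For the dichotomy I would pick $(p,z)\in\A_+$ with $z\neq 0$ and exploit the $\tau$-invariance of $\A_+$ to produce, for any $t>0$, some $z_{-t}\in A_+(p\cdot(-t))\subseteq X_+$ with $u(t,p\cdot(-t),z_{-t})=z$. The alternative $z_{-t}=0$ is ruled out since it would yield $z=u(t,p\cdot(-t),0)=0$, so $z_{-t}>0$ in the sense of~\eqref{order}. Strong monotonicity of $\tau$ on $X^\gamma$ (which in the Dirichlet setting uses the smoothing $\tau_t\colon P\times X\to P\times X^\alpha$ recalled in the text and the description of $\Int X^\alpha_+$) then yields
\[
z=u(t,p\cdot(-t),z_{-t})\gg u(t,p\cdot(-t),0)=0,
\]
so $z\in\Int X^\gamma_+$, as required.

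The shared upper boundary will follow from~\eqref{b(p)} (or~\eqref{b(p)-dir} in the Dirichlet case). Since $A_+(p)\subseteq A(p)$, the inequality $\sup A_+(p)\leq b(p)$ is immediate. Conversely, the approximating terms $u(t,p\cdot(-t),re_0)$ all lie in $X_+$ by the positive invariance established in the first step, and each equals the second component of $\tau_t(p\cdot(-t),re_0)$, with $P\times\{re_0\}\subseteq P\times X_+$ a bounded set attracted by $\A_+$; passing to the limit places $(p,b(p))$ in $\A_+$, so $b(p)\in A_+(p)$ and hence $\sup A_+(p)=b(p)$. The main delicacy I anticipate is in the Dirichlet case, where attractors and strong monotonicity live in the fractional power space $X^\alpha$ rather than $X$ and~\eqref{b(p)-dir} is phrased discretely; beyond checking that the passage to the limit and the closedness of the cone behave correctly in $X^\alpha$, no new ideas are needed.
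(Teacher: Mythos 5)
Your proposal is correct and follows essentially the same route as the paper's proof: restrict to the closed positively invariant cone and intersect an absorbing set with it to get $\A_+\subseteq\A$, use full orbits in $\A_+$ together with strong monotonicity to force $z\gg 0$ for nonzero $z\in A_+(p)$, and identify the upper boundaries via the pullback formula~\eqref{b(p)}. The only difference is that you spell out the two inequalities for $\sup A_+(p)=b(p)$ explicitly, where the paper simply cites~\eqref{b(p)}.
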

\begin{proof}
Take the compact absorbing set $C_1\subset P\times X$ for $\tau$ given in Proposition~2 in~\cite{cardoso} or in Proposition~3.4 in~\cite{laos}, respectively for Neumann or Robin, and Dirichlet boundary conditions. It is obvious that $C_1\cap (P\times X_+)$ is a compact absorbing set for the restriction of the semiflow $\tau$ to the closed and positively invariant set   $P\times X_+$. Therefore, there exists a global attractor for the restricted semiflow, call it $\A_+\subset P\times X_+$. Clearly $\A_+\subseteq \A$ and the trivial null orbit lies inside $\A_+$. By the strong monotonicity, if $z>0$, then $u(t,p,z)\gg 0$ for any $t>0$ and $p\in P$. Now, take a pair $(p,z)\in \A_+$ with $z\not=0$. Since we have full orbits inside $\A_+$, we can write $z=u(t,p{\cdot}(-t),u(-t,p,z))$ for $(p{\cdot}(-t),u(-t,p,z))\in \A_+$ for any $t>0$. It must be $u(-t,p,z)\not=0$ since otherwise $z=0$, so that $u(-t,p,z)>0$ and then $z\gg 0$, as we wanted to prove. The fact that $\A$ and $\A_+$ share the upper boundary follows from formula~\eqref{b(p)} for $b(p)$. The proof is finished.
\end{proof}
As a consequence, we get some basic properties for the upper boundary of $\A$.
\begin{prop}\label{prop-0 o positivo}
Assume conditions $(c1)$-$(c3)$ on $g$. Then,  for any $p\in P$, either $b(p)=0$ or $b(p)\gg 0$. Besides, the sets $P_0:=\{p\in P\mid b(p)=0\}$
and $P_{+}:=\{p\in P\mid b(p)\gg 0\}$ are invariant and, if $R\subseteq P$ is the invariant and residual set of continuity points for $b$, then $P_0\subseteq R$.
\end{prop}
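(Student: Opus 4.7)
The plan is to handle the three assertions in sequence, leveraging Proposition~\ref{prop-atractor cono positivo} for the dichotomy, the equilibrium property of $b$ together with strong monotonicity for the invariance, and the compactness of $\A$ for the inclusion $P_0\subseteq R$.

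For the dichotomy, I would invoke Proposition~\ref{prop-atractor cono positivo} directly: since $\A$ and $\A_+$ share the upper boundary map $b$, and $\A_+\subseteq (P\times\{0\})\cup(P\times\Int X^\gamma_+)$, the value $b(p)=\sup A_+(p)$ lies in $A_+(p)$ because this section is compact, and hence belongs to $\{0\}\cup\Int X^\gamma_+$.

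For the invariance of $P_0$ (and, by the dichotomy, of its complement $P_+$) I would use the equilibrium identity $b(p{\cdot}t)=u(t,p,b(p))$ for $t\ge 0$. Forward invariance is immediate: by (c1) together with homogeneity of the boundary conditions the constant map $0$ is a solution, so $u(t,p,0)=0$ and thus $b(p)=0$ forces $b(p{\cdot}t)=0$. For the backward part, given $p\in P_0$ and $s>0$, set $q=p{\cdot}(-s)$ and note $u(s,q,b(q))=b(q{\cdot}s)=b(p)=0$; if $b(q)\ne 0$ the dichotomy would give $b(q)\gg 0$, and then strong monotonicity of $\tau$ would yield $u(s,q,b(q))\gg u(s,q,0)=0$, a contradiction. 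Hence $b(q)=0$, so $P_0$ is $\sigma$-invariant.

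For the last assertion I would show that every $p\in P_0$ is a continuity point of $b$. Let $p_n\to p$ with $b(p)=0$. Each pair $(p_n,b(p_n))$ lies in the compact attractor $\A$, so any subsequence admits a further subsequence converging to some $(p,z)\in\A$. The limit $z$ lies in $A(p)$, is in the closed cone $X^\gamma_+$, and satisfies $z\le b(p)=0$; hence $z=0$. Because every subsequential limit is $0=b(p)$, the whole sequence $b(p_n)$ converges to $b(p)$ in $X^\gamma$, so $p\in R$. The subtlest point is the backward direction of the invariance: one cannot appeal to injectivity of $u(t,p,{\cdot})$, so the argument must combine the dichotomy established in the first step with the strong monotonicity of $\tau$ (recalling that in the Dirichlet setting strong monotonicity is taken in $X^\alpha$).
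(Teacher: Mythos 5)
Your proof is correct and follows essentially the same route as the paper's: the dichotomy comes from Proposition~\ref{prop-atractor cono positivo}, the invariance of $P_0$ from evolving $0$ forwards and a strong-monotonicity contradiction for the backward direction, and the inclusion $P_0\subseteq R$ from upper semicontinuity of $b$ combined with its nonnegativity. One small caveat: compactness of $A_+(p)$ alone does not guarantee that $\sup A_+(p)$ is attained in $A_+(p)$ (a compact set of pairwise incomparable functions already defeats that); the correct reason $b(p)\in A_+(p)$ is that $b$ is an equilibrium realized as the pullback limit~\eqref{b(p)}, so $(p,b(p))$ lies on a bounded full orbit and hence in $\A_+$ by pullback attraction and closedness.
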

\begin{proof}
By Proposition \ref{prop-atractor cono positivo}, in particular $b(p)=0$ or $b(p)\gg 0$ for $p\in P$.
Now, if $b(p)=0$ for some $p\in P$, for $t>0$, $b(p{\cdot}t)=u(t,p,0)=0$;  and also $b(p{\cdot}(-t))=0$, for if not, we would get a contradiction moving forwards.  Thus, the set $P_0=\{p\in P\mid b(p)=0\}$ is invariant and so is its complementary set $P_+$.
\par
Finally, if $p_0$ is such that $b(p_0)=0$, then $p_0$ is a point of continuity for $b$ due to the semicontinuity of $b$, which in this case means that $\{(p,z)\mid z\leq b(p)\}$ is a closed set, and the fact that $b$ is always nonnegative.
\end{proof}
The size of the sets $P_0$ and $P_+$ will be studied in Sections~\ref{sect-linear dissipative} and~\ref{sect-purely dissipative}, at least from an ergodic point of view, depending on the associated principal spectrum. Maintaining the previous notation, we state the next result.
\begin{prop}
Assume conditions $(c1)$-$(c3)$ on $g$. Then:
\begin{itemize}
\item[(i)] $P$ decomposes as the disjoint union $P=V\cup F$ of an open invariant set $V\subseteq P_+$ and a compact invariant set $F$ (both possibly empty) in such a way that, if $F\not=\emptyset$, then $P_0=R\cap F$ is the residual subset on $F$ of continuity points for the restriction map $b_{|F}$.
\item[(ii)] $V$ is not void if and only if $R\cap P_+$ is not void.
\item[(iii)] If $p_0\in P$ is topologically transitive and $b(p_0)=0$, then $P_0=R$ and $P=F$.
\item[(iv)] If $P$ is minimal, then either $V=\emptyset$ or $F=\emptyset$.
\item[(v)] If there is an ergodic measure $\mu$ with support $\supp(\mu)=P$ and $V\not=\emptyset$, then $\mu(V)=1$ and $V$ is dense.
\end{itemize}
\end{prop}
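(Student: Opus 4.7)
The plan is to define $V := \Int(P_+)$ (interior taken in $P$) and $F := P \setminus V$, and to derive each of the five statements essentially from this choice together with the semicontinuity of $b$ already supplied by Proposition~\ref{prop-0 o positivo}. By construction $V$ is open, $V \subseteq P_+$, and $F$ is closed hence compact. Invariance of $V$ follows from invariance of $P_+$ and the fact that each $\sigma_t$ is a homeomorphism of $P$, so that $\sigma_t(\Int P_+) = \Int(\sigma_t(P_+)) = \Int P_+$; hence $F$ is invariant too.

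For the identification $P_0 = R \cap F$ in (i), the inclusion $P_0 \subseteq R$ is already given and $P_0 \cap V = \emptyset$ since $V \subseteq P_+$; conversely, if $p \in R \cap F$ had $b(p) \gg 0$, then continuity of $b$ at $p$ combined with the openness of $\Int X^\gamma_+$ would yield a neighborhood of $p$ on which $b \gg 0$, forcing $p \in \Int P_+ = V$, a contradiction. For residuality of $P_0$ in $F$, I would apply the same Baire-category argument used in~\cite{chue,nono2} to the semicontinuous restriction $b|_F$ on the compact (Baire) space $F$ to obtain a residual set of continuity points of $b|_F$ in $F$, and then check that this set coincides with $P_0$: a point $p \in F$ with $b(p) \gg 0$ necessarily lies in $\partial P_+$, so every neighborhood of $p$ in $P$ meets $P_0 \subseteq F$, and the value $0$ there breaks continuity of $b|_F$ at $p$.

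Items (ii)--(iv) then follow quickly from (i). For (ii), if $V \neq \emptyset$ then density of $R$ gives $R \cap P_+ \supseteq R \cap V \neq \emptyset$; conversely, a point of $R \cap P_+$ is a continuity point of $b$ with $b \gg 0$, so a neighborhood of it lies inside $P_+$ and hence inside $V$. For (iii), $p_0 \in P_0$ together with invariance of $P_0$ and topological transitivity of $p_0$ makes $P_0$ dense, which forces $V = \Int P_+ = \emptyset$, i.e.\ $P = F$; moreover any $p \in R$ with $b(p) \gg 0$ would yield an open neighborhood of $p$ disjoint from the dense set $P_0$, so $R \subseteq P_0$ and hence $R = P_0$. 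For (iv), if $P$ is minimal and $F \neq \emptyset$, then $F$ is a nonempty closed invariant subset of $P$, hence $F = P$ and $V = \emptyset$.

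Item (v) is the shortest: $V$ is open, invariant and nonempty, so $\supp(\mu) = P$ gives $\mu(V) > 0$, and ergodicity upgrades this to $\mu(V) = 1$; then $\mu(F) = 0$ forces the closed set $F$ to have empty interior (otherwise full support would give any open subset of $F$ positive $\mu$-measure), so $V = P \setminus F$ is dense. The only step that genuinely needs care is the identification $P_0 = R \cap F$ together with its residuality in $F$; everything else is a direct consequence of invariance, openness of $\Int X^\gamma_+$, and standard Baire-category or ergodicity facts.
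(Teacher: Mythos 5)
Your proposal is correct and follows essentially the same route as the paper: the paper also takes $V=\Int P_+$ and shows its complement equals $\overline{P_0}$, identifies $P_0=R\cap F$ via the semicontinuity of $b$ and the fact that $P_0$ accumulates on every point of $F$, and derives (ii)--(v) from invariance, density of $R$, and ergodicity exactly as you do. The only (harmless) deviations are cosmetic: you define $F$ as $P\setminus V$ rather than as $\overline{P_0}$, and in (iv) you apply minimality directly to the closed invariant set $F$ instead of routing through (iii).
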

\begin{proof}
(i) Let us take the open set $V:=\Int P_{+}$ and the compact set $F:=\overline{P_0}$. If there exists some $p\in \Int P_{+}$, then there is an open neighbourhood of $p$ contained in $P_+$, so that it cannot be $p\in \overline{P_0}$. On the other hand, if $p\notin \Int P_{+}$,  in any open neighbourhood of $p$ we can find a point which is not in $P_+$ and is thus in $P_{0}$, so that $p\in \overline{P_0}$. From these facts, we conclude that $V\cap F=\emptyset$ and $P=V\cup F$. Since $P_0$ is known to be invariant, its closure $F$ is invariant too, and so is its complementary set $V$. Finally, if $F\not=\emptyset$, it is obvious that $R\cap F$ is the residual subset on $F$ of continuity points for the restriction map $b_{|F}$ and $P_0\subseteq R\cap F$ by Proposition~\ref{prop-0 o positivo}. Conversely, if $p\in R\cap F$, it is immediate to check that $b(p)=0$ and $p\in P_0$.
\par
(ii) If $V=\Int P_{+} \not=\emptyset$, there is a ball in $P_{+}$, and since the residual set $R$ is dense, it must be $R\cap P_+\not=\emptyset$. Conversely,  if $R\cap P_+\not=\emptyset$, by continuity there is a ball inside $P_+$, so that  $V\not=\emptyset$.
\par
(iii) First of all, $b(p_0{\cdot}t)=0$ for any $t\in \R$ and $P_0\subseteq R$, by Proposition~\ref{prop-0 o positivo}.  Argue by contradiction and assume that $P_0\subsetneq R$. Then, $R\cap P_+\not=\emptyset$ or  equivalently, by (ii), the open set $V\not=\emptyset$. Since $p_0$ is  topologically transitive, $p_0{\cdot}t\in V$ for some $t\in \R$, which is absurd. In consequence, $P_0=R$, $V=\emptyset$ and $P=F$.
\par
(iv) If $P_0=\emptyset$, then $F=\emptyset$. Else $P=F$ and $V=\emptyset$ by (iii),  since any point in $P$ is topologically transitive when $P$ is minimal.
\par
(v) If $P=V$, we are done. Else, $\emptyset\not=F\subsetneq P$ is a closed set and since $\supp(\mu)=P$, $\mu(V)=\mu(P\setminus F)>0$. By the invariance of $V$ and the ergodicity of $\mu$, necessarily $\mu(V)=1$ and $\mu(F)=0$. Finally, if $V$ were not dense, for some $p\in F$ there would be a ball $B(p,r)\subset F$ ($r>0$), and this cannot happen if  $\supp(\mu)=P$.
The proof is finished.
\end{proof}
In the description of the attractor $\A$ when $P$ is minimal and uniquely ergodic and the upper Lyapunov exponent is null (see~\cite{calaobsa}), only in the linear-dissipative case it can happen that $b(p)\gg 0$ for any $p\in P$, and then $b$ is continuous on $P$ and thus, uniformly strongly positive. But note that in the general case the upper boundary map $b$ is only known to be semicontinuous, and it can happen that $b(p)\gg 0$ for any $p\in P$, and $b$ is not uniformly strongly positive: see Example~\ref{eje p_0}~(ii), where this behaviour is shown for a linear-dissipative family.
In the next result we give a sufficient condition in order to have $b$ uniformly strongly above $0$.
\begin{prop}\label{prop 3.5}
Assume conditions $(c1)$-$(c3)$ on $g$, assume that $b(p)\gg 0$ for any $p\in P$ and let $R$ be the set of continuity points for $b$. Then, if $R\cap M\not=\emptyset$ for any minimal set $M\subset P$, there exists a $z_0\gg 0$ such that $b(p)\geq z_0$ for any $p\in P$.
\end{prop}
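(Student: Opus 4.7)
The plan is to work with the closure $\bar B$ of the graph $B:=\{(p,b(p)) : p \in P\} \subseteq \A$ and to show that $\bar B$ is entirely contained in the open set $P \times \Int X^\gamma_+$; a standard compactness-in-an-open-set argument in the metric space $P \times X^\gamma$ will then deliver a uniform order bound of the form $b(p) \geq z_0 \gg 0$, with $z_0$ a suitable positive multiple of $e_0$. Since $b$ is a $\tau$-equilibrium, $B$ is $\tau$-invariant, and using the backward extension of $\tau$ on the attractor together with continuity of the semiflow, $\bar B \subseteq \A$ is a compact $\tau$-invariant set. The key observation is that at every continuity point $p_0 \in R$, the fiber $\bar B \cap (\{p_0\}\times X^\gamma)=\{(p_0,b(p_0))\}$ is a singleton, because any sequence $p_n\to p_0$ forces $b(p_n)\to b(p_0)$ in $X^\gamma$.

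I would then argue by contradiction, supposing there is $(p^*,z^*)\in \bar B$ with $z^* \notin \Int X^\gamma_+$. Since $\bar B \subseteq P \times X^\gamma_+$ (it is the closure of points $(p,b(p))$ with $b(p)\gg 0$), necessarily $z^*\geq 0$. In the case $z^*=0$, the $\tau$-invariance of $\bar B$ combined with $u(t,p,0)\equiv 0$ (from condition (c1)) gives $(p^*\cdot t,0)\in \bar B$ for every $t\in\R$. Pick any minimal set $M\subseteq \omega(p^*)$ and, by hypothesis, a continuity point $p_M\in R\cap M$; by definition of $\omega$-limit, some $t_n\to\infty$ satisfies $p^*\cdot t_n\to p_M$, hence $(p^*\cdot t_n,0)\to(p_M,0)\in \bar B$, and the singleton-fiber property at $p_M$ forces $b(p_M)=0$, contradicting $b(p_M)\gg 0$.

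In the complementary case $z^*>0$ and $z^*\notin \Int X^\gamma_+$, I would invoke the backward extension: for every $t>0$ the point $w_t:=u(-t,p^*,z^*)$ satisfies $(p^*\cdot(-t),w_t)\in \bar B$ and $u(t,p^*\cdot(-t),w_t)=z^*$. Non-negativity of the second coordinates in $\bar B$ yields $w_t\geq 0$; if $w_t>0$, the strong monotonicity of $\tau$ would give $u(t,p^*\cdot(-t),w_t)\gg 0$, contradicting $z^*\notin \Int X^\gamma_+$. Hence $w_t=0$, which in turn gives $z^*=u(t,p^*\cdot(-t),0)=0$, contradicting $z^*>0$. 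Both cases being impossible, we conclude $\bar B\subseteq P\times \Int X^\gamma_+$.

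The most delicate step is the backward-orbit argument in the second case, which requires the full $\tau$-invariance of $\bar B$ on the attractor (including the backward direction), preservation of non-negativity under closure, and the precise strong-monotonicity statement $z>0\Rightarrow u(t,p,z)\gg 0$ for $t>0$. The final translation from ``$\bar B$ sits in the open set $P\times \Int X^\gamma_+$'' to the uniform lower bound $b(p)\geq z_0$ uses the standard fact that a compact subset of the interior of a solid cone in a strongly ordered Banach space is uniformly bounded below by a positive multiple of any fixed element $\gg 0$, so that one may take $z_0$ to be an appropriate positive multiple of $e_0$.
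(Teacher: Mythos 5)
Your proof is correct, but it follows a genuinely different route from the paper's. The paper argues directly on the base: it picks a continuity point $p_i\in R\cap M_i$ in each minimal set, uses $\alpha$-limit sets to cover $P$ by finitely many forward translates $B(p_{i_k},\delta_{i_k}){\cdot}t_j$ of small balls on which $b\geq \varepsilon_{i_k}z$, and then propagates these lower bounds forward with the strong monotonicity of the nonlinear semiflow, taking the minimum of finitely many constants $\wit\varepsilon_{i_k j}$. You instead work with the compact invariant set $\overline{B}=\overline{\{(p,b(p))\}}\subseteq\A$ and show it avoids the boundary of the cone: your second case ($z^*>0$ but not interior) is essentially the dichotomy already established in Proposition~\ref{prop-atractor cono positivo}, and the real content is your first case, where the hypothesis $R\cap M\not=\emptyset$ is used (via $\omega$-limits rather than the paper's $\alpha$-limits, and via the singleton-fibre property of $\overline{B}$ at continuity points) to exclude $0$ from the closure of the graph; a compactness argument inside $\Int X^\gamma_+$ then yields the uniform bound. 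Your approach is softer and localizes the hypothesis more cleanly, at the price of having to justify the full (two-sided) invariance of $\overline{B}$ — the backward orbit through $(p^*,z^*)$ inside $\overline{B}$ should be produced explicitly by passing to a convergent subsequence of $(p_n{\cdot}(-t),b(p_n{\cdot}(-t)))$ and using the joint continuity of $u(t,\cdot,\cdot)$, which is routine but worth spelling out. The paper's covering argument is more constructive and delivers the constant $z_0$ directly from finitely many propagation estimates; both proofs use the same three ingredients (existence of minimal sets in limit sets, continuity of $b$ at some point of each minimal set, and strong monotonicity).
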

\begin{proof}
Let $(M_i)_{i\in I}$ denote the family of minimal sets for the flow in $P$, for an appropriate index set $I$. Assuming the axiom of choice, we can take a $p_i\in M_i$ for each $i\in I$. Now, for each family $(\delta_i)_{i\in I}$ of positive numbers $\delta_i>0$, we affirm that
$P=\cup_{i\in I} \cup_{t\geq 0} B(p_i,\delta_i){\cdot}t$, where $B(p_i,\delta_i)$ denotes the open ball centered at $p_i$ with radius $\delta_i$. The argument is simple: if $p\in P$, we consider its $\alpha$-limit set $F_p$, which is a compact invariant set and thus contains a minimal set $M_i$ for some $i\in I$. Then, for the element $p_i\in M_i\subset F_p$, there exists a sequence $(t_n)_n\downarrow -\infty$ such that $p{\cdot}t_n\to p_i$ as $n\to \infty$. Therefore, there exists an integer $n_0$ such that for $n\geq n_0$, $p{\cdot}t_n\in B(p_i,\delta_i)$ or equivalently,  $p\in B(p_i,\delta_i){\cdot}(-t_n)$ with $-t_n>0$.
\par
At this point, fix a $z\gg 0$ and note that we can choose the points $p_i\in R\cap M_i$, $i\in I$ so that they are points of continuity for $b$. Then, for each $i\in I$, since $b(p_i)\gg 0$, we can take $\delta_i>0$ and $\varepsilon_i>0$ such that $b(p)\geq \varepsilon_i z$ for any $p\in B(p_i,\delta_i)$. Since $P=\cup_{i\in I} \cup_{t\geq 0} B(p_i,\delta_i){\cdot}t$, the sets $B(p_i,\delta_i){\cdot}t$ are open because the maps $\sigma_t:P\to P$, $p\mapsto p{\cdot}t$ are homeomorphisms, and $P$ is compact, we can extract a finite covering which we can write for simplicity by  $P=\cup_{k=1}^{n_1} \cup_{j=1}^{n_2} B(p_{{i_k}},\delta_{{i_k}}){\cdot}t_j$. Now, since the semiflow is strongly monotone, for any $p\in B(p_{{i_k}},\delta_{{i_k}})$, $b(p{\cdot}t_j)=u(t_j,p,b(p))\geq u(t_j,p,\varepsilon_{{i_k}} z)\geq \wit \varepsilon_{{i_k}j} z$ for a certain  $\wit \varepsilon_{{i_k}j}>0$. To conclude the proof, it suffices to take $z_0=\varepsilon z\gg 0$ for $\varepsilon=\min\{\wit \varepsilon_{{i_k}j}\mid 1\leq k\leq n_1,\,1\leq j\leq n_2 \}>0$.
\end{proof}
\subsection{The linear equations: continuous separation, principal spectrum and Lyapunov exponents}\label{sec-lineal}
Here we collect some fundamental facts related to the associated linear equations. Recall that $P$ is a compact metric space with a continuous flow on it. Let us denote by $\tau_L$ the  linear skew-product semiflow
\begin{equation*}%\label{tauL}
\begin{array}{cccl}
 \tau_L: & \R_+\times P\times X& \longrightarrow & \hspace{0.3cm}P\times X\\
 & (t,p,z) & \mapsto
 &(p{\cdot}t,\phi(t,p)\,z)\,
\end{array}
\end{equation*}
induced by  the mild solutions $\phi(t,p)\,z$ of the linear ACPs associated to the family of linearized problems along the null solution of~\eqref{pdefamilynl},  which with condition (c1) is exactly given by the linear part of the problems:
\begin{equation}\label{linealizada}
\left\{\begin{array}{l} \des\frac{\partial y}{\partial t}  =
 \Delta \, y+h(p{\cdot}t,x)\,y\,,\quad t>0\,,\;\,x\in U, \;\, \text{for each}\; p\in P,\\[.2cm]
By:=\bar\alpha(x)\,y+\kappa\,\des\frac{\partial y}{\partial n} =0\,,\quad  t>0\,,\;\,x\in \partial U.
\end{array}\right.
\end{equation}
In particular $\phi(t,p)$ are bounded operators on $X$ which are compact for $t>0$  and satisfy the linear semicocycle property $\phi(t+s,p)=\phi(t,p{\cdot}s)\,\phi(s,p)$, $p\in P$, $t,s\geq 0$.
\par
In the case of Dirichlet boundary conditions, as mentioned before,  we can also build  a linear skew-product semiflow $\tau_L$ on the space $P\times X^\alpha$. Doing this, we gain the strong positivity of the operators $\phi(t,p)$ for $t>0$ and $p\in P$. Together with the compact character of the linear operators, this implies that the linear skew-product semiflow  $\tau_L$ admits a continuous separation $X^\gamma=X_1(p)\oplus X_2(p)$ for  $p \in P$, in the terms stated in Pol\'{a}\v{c}ik and Tere\v{s}\v{c}\'{a}k~\cite{pote} and Shen and Yi~\cite{shyi}. Namely, there are two families of subspaces $\{X_1(p)\}_{p\in P}$ and $\{X_2(p)\}_{p\in P}$ of $X^\gamma$ which satisfy:
\begin{itemize}
\item[(1)] $X^\gamma=X_1(p)\oplus X_2(p)$  and $X_1(p)$, $X_2(p)$ vary
    continuously in $P$;
 \item[(2)] $X_1(p)=\langle e(p)\rangle$, with $e(p)\gg 0$ and
     $\|e(p)\|=1$ for any $p\in P$;
\item[(3)] $X_2(p)\cap X^\gamma_+=\{0\}$ for any $p\in P$;
\item[(4)] for any $t>0$,  $p\in P$,
\begin{align*}
\phi(t,p)\,X_1(p)&= X_1(p{\cdot}t)\,,\\
\phi(t,p)\,X_2(p)&\subset X_2(p{\cdot}t)\,;
\end{align*}
\item[(5)] there are $M_0>0$, $\delta_0>0$ such that for any $p\in P$, $z\in
    X_2(p)$ with $\|z\|_\gamma=1$ and $t>0$, $\|\phi(t,p)\,z\|_\gamma\leq M_0 \,e^{-\delta_0 t}\|\phi(t,p)\,e(p)\|_\gamma$.
\end{itemize}
\par
In this situation,  the 1-dim invariant subbundle $\bigcup_{p\in P} \{p\} \times X_1(p)$ is called the {\em principal bundle\/}. Note that in the case of Dirichlet boundary conditions the vectors $e(p)\in\Int X^\alpha_+$ generating the principal bundle are normalized with respect to the sup-norm.   The Sacker-Sell spectrum or continuous spectrum of the restriction of $\tau_L$ to this invariant subbundle is called the {\em principal spectrum\/} of $\tau_L$, and is denoted by  $\Sigma_{\text{pr}}$ (see Mierczy{\'n}ski and Shen~\cite{mish}). The classical theory of Sacker and Sell~\cite{sase} for finite-dimensional linear skew-product flows says that, provided that $P$ is connected, the principal spectrum $\Sigma_{\text{pr}}$ is a possibly degenerate compact interval of the real line, that is, $\Sigma_{\text{pr}}=[\alpha_P,\lambda_P]$ with $\alpha_P\leq\lambda_P$. The upper end $\lambda_P$ is the upper Lyapunov exponent of $\tau_L$.
It is well-known that if $P$ is uniquely ergodic, then $\Sigma_{\text{pr}}=\{\lambda_P\}$. In what follows, we assume that $P$ is compact and connected.
\par
Besides, the continuous separation permits to associate to $h$ the 1-dim continuous linear cocycle  $c(t,p)$ driving the linear dynamics inside  the principal bundle, that is, given for $t\geq 0$ by the positive number $c(t,p)$ such that
\begin{equation}\label{c}
\phi(t,p)\,e(p)=c(t,p)\,e(p{\cdot}t)\,,\quad t\geq 0\,,\; p\in P
\end{equation}
and by the relation $c(-t,p)=1/c(t,p{\cdot}(-t))$ for any $t>0$ and $p\in P$. This 1-dim linear cocycle has certainly proved to be a fundamental tool in the dynamical study of the infinite dimensional problems we are dealing with (see~\cite{calaobsa} and~\cite{laos}).
\par
In what refers to Lyapunov characteristic exponents for the linear dynamics of $\tau_L$, we recall the standard definitions for $p\in P$ and $z\in X^\gamma$, $z\not= 0$:
\[
\lambda_s^+(p,z):= \limsup_{t\to \infty} \frac{\ln \|\phi(t,p)\,z\|_\gamma}{t}\;\; \text{and}\;\; \lambda_{i}^+(p,z):= \liminf_{t\to \infty} \frac{\ln \|\phi(t,p)\,z\|_\gamma}{t}\,.
\]
The subscripts $s$ and $i$ stand for superior or inferior limits, respectively, and the superscript $+$ means that $t\to\infty$.
In the case of Dirichlet boundary conditions, it is important to know that one can  argue as in Proposition~3 in Obaya and Sanz~\cite{obsa2019} to see that the previous exponents can be equally computed using the sup-norm instead of the norm in $X^\alpha$.
Besides, whenever it makes sense, that is, if we are dealing with a linear flow instead of a semiflow, one also defines the exponents $\lambda_s^-(p,z)$ and $\lambda_{i}^-(p,z)$ by taking limits as $t\to -\infty$. For instance, it makes sense to consider the four Lyapunov characteristic exponents linked to the 1-dim linear cocycle  $c(t,p)$, which in this scalar case are independent of $z\in \R$, $z\not= 0$:
\begin{equation*}%\label{lyap}
\lambda_s^{\pm}(p):=\limsup_{t\to \pm\infty} \frac{\ln c(t,p)}{t} \quad\text{ and }\quad \lambda_{i}^{\pm}(p):=\liminf_{t\to \pm\infty} \frac{\ln c(t,p)}{t}\,.
\end{equation*}
In particular, the Lyapunov exponents $\lambda_s^{\pm}(p)$  and $\lambda_{i}^{\pm}(p)$ for $p\in P$ lie in $[\alpha_P,\lambda_P]$ (see~\cite{sase}). Besides, by Theorem~2.3 in Johnson et al.~\cite{jops} there are ergodic measures $\nu$ and $\mu$ for the flow on $P$ (they may coincide) such that
\begin{equation}\label{medidas erg}
\begin{split}
\alpha_P=&\lim_{t\to\infty} \frac{\ln c(t,p)}{t}=\lim_{t\to-\infty} \frac{\ln c(t,p)}{t}\quad  \text{$\nu$-a.e.},\\
\lambda_P=&\lim_{t\to\infty}  \frac{\ln c(t,p)}{t}=\lim_{t\to -\infty}  \frac{\ln c(t,p)}{t}\quad  \text{$\mu$-a.e.}.
\end{split}
\end{equation}
\par
On the other hand, the  pullback Lyapunov characteristic exponents are defined as follows for linear semiflows over a base flow, denoted with a superscript $'$, and for simplicity we write the definition directly for the sup-norm:
\begin{equation*}
\lambda_{s}'(p,z):= \limsup_{t\to \infty} \frac{\ln \|\phi(t,p{\cdot}(-t))\,z\|}{t}\,, \quad \lambda_{i}'(p,z):= \liminf_{t\to \infty} \frac{\ln \|\phi(t,p{\cdot}(-t))\,z\|}{t}\,.
\end{equation*}
\par
Due to the existence of a continuous separation, we can calculate the forwards and pullback characteristic exponents of pairs $(p,z)$ with $z\gg 0$ in terms of the characteristic exponents of the 1-dim cocycle $c(t,p)$ as $t\to \infty$ or $t\to -\infty$, respectively. The result is stated for the exponents $\lambda_s^+(p,z)$ and $\lambda_{s}'(p,z)$, but it can immediately be rephrased for $\lambda_{i}^+(p,z)$ and $\lambda_{i}'(p,z)$ just by taking inferior limits instead.
\begin{prop}\label{prop-exponentes Lyap}
For any $p\in P$ and any $z\gg 0$,
\[
\lambda_s^+(p,z)=\limsup_{t\to \infty} \frac{\ln c(t,p)}{t}=:\lambda_s^{+}(p) \;\; \text{and}\;\;\;\lambda_{s}'(p,z) =\limsup_{t\to -\infty} \frac{\ln c(t,p)}{t}=:\lambda_s^{-}(p)\,. \]
\end{prop}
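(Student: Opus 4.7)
\emph{Strategy.} The plan is to use the continuous separation of $\tau_L$ to decompose any $z\gg 0$ along the principal bundle and its complement, show that the component along $e(q)$ dominates exponentially, and then transfer the result to the pullback setting through the cocycle identity $c(t,p{\cdot}(-t))=1/c(-t,p)$.

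\emph{Decomposition.} For every $q\in P$, the splitting $X^\gamma=X_1(q)\oplus X_2(q)$ from property~(1) allows us to write $z=\alpha(q)\,e(q)+w(q)$ with $w(q)\in X_2(q)$, and both $\alpha$ and $w$ depend continuously on $q$. Moreover $\alpha(q)>0$ for every $q\in P$: if $\alpha(q)\le 0$, then $w(q)=z-\alpha(q)\,e(q)\gg 0$, contradicting the cone condition $X_2(q)\cap X^\gamma_+=\{0\}$ in~(3). Hence by compactness of $P$ there exist $0<m\le M$ and $M'>0$ with $m\le\alpha(q)\le M$ and $\|w(q)\|_\gamma\le M'$ uniformly in $q\in P$.

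\emph{Forward and pullback estimates.} For the forward exponent, take $q=p$ and apply $\phi(t,p)$:
\begin{equation*}
\phi(t,p)\,z=\alpha(p)\,c(t,p)\,e(p{\cdot}t)+\phi(t,p)\,w(p).
\end{equation*}
Property~(5) bounds $\|\phi(t,p)\,w(p)\|_\gamma\le M_0 M'\,e^{-\delta_0 t}\,c(t,p)\,\|e(p{\cdot}t)\|_\gamma$, while the continuous positive function $q\mapsto\|e(q)\|_\gamma$ is pinched between two positive constants on $P$. A double triangle inequality then yields, for $t$ large,
\begin{equation*}
c_1\,c(t,p)\le\|\phi(t,p)\,z\|_\gamma\le c_2\,c(t,p),
\end{equation*}
so dividing $\ln$ by $t$ and taking $\limsup$ gives $\lambda_s^+(p,z)=\lambda_s^+(p)$. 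For the pullback exponent, apply the decomposition at $q_t:=p{\cdot}(-t)$, use $c(t,q_t)=1/c(-t,p)$, and bound the $X_2$ term via~(5) together with the embedding $\|\cdot\|\le C\|\cdot\|_\gamma$ (trivial for Neumann/Robin, standard for Dirichlet), to get
\begin{equation*}
\frac{m/2}{c(-t,p)}\le\|\phi(t,q_t)\,z\|\le\frac{M+1}{c(-t,p)}\quad\text{for $t$ large.}
\end{equation*}
Taking $\ln$, dividing by $t$, and changing variables $s=-t$ inside the $\limsup$ yields $\lambda_s'(p,z)=\limsup_{s\to-\infty}\ln c(s,p)/s=\lambda_s^-(p)$.

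\emph{Main obstacle.} The key nontrivial step is the uniform positivity $\inf_{q\in P}\alpha(q)>0$, which combines the strong positivity $z\gg 0$, the cone condition~(3), and compactness; everything else is routine, thanks to the exponential gap in~(5) making the principal-bundle component dominate and to the cocycle identity relating forward evolution over $q_t$ to backward evolution over $p$.
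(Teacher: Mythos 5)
Your argument is correct, but it follows a genuinely different route from the paper's. The paper never decomposes $z$ along the continuous separation: it simply observes that, since $z\gg 0$ and $q\mapsto e(q)\gg 0$ is continuous and normalized on the compact set $P$, there are constants $c_1,c_2>0$ with $c_1 z\le e(q)\le c_2 z$ for all $q\in P$; applying the positive operator $\phi(t,p)$ (resp.\ $\phi(t,p{\cdot}(-t))$) to this chain of inequalities and using monotonicity of the sup-norm on the cone yields $c_1\|\phi(t,p)z\|\le c(t,p)\le c_2\|\phi(t,p)z\|$ directly, after which only the cocycle identity $c(t,p{\cdot}(-t))=1/c(-t,p)$ is needed. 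That order-theoretic sandwich uses nothing beyond properties (2) and the positivity of the operators. Your route instead writes $z=\alpha(q)e(q)+w(q)$, proves $\inf_q\alpha(q)>0$ via the cone condition (3) and compactness, and invokes the exponential domination (5) to kill the $X_2$-component; this is heavier machinery (you additionally need continuity of the projections and the gap estimate), but it is sound — your argument for $\alpha(q)>0$ is exactly right, the normalization subtlety $\|e(q)\|=1$ versus $\|e(q)\|_\gamma$ in the Dirichlet case is correctly handled by pinching $\|e(\cdot)\|_\gamma$ between positive constants, and the final change of variables $s=-t$ in the $\limsup$ is legitimate. What your approach buys that the paper's does not is the sharper asymptotic equivalence $\|\phi(t,p)z\|_\gamma\sim\alpha(p)\,c(t,p)\,\|e(p{\cdot}t)\|_\gamma$, i.e.\ control of the actual constant, not just of the logarithmic growth rate; for the statement as given, the paper's two-line sandwich is the more economical proof.
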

\begin{proof}
Fix a $z\in X^\gamma$, $z\gg 0$. By the properties of the continuous separation and the compact character of $P$, we can find constants $c_1, c_2>0$ such that $c_1z\leq e(p)\leq c_2z$ for any $p\in P$. Applying monotonicity of both $\tau_L$ and the norm, on the one hand we have that $c_1\|\phi(t,p)\,z\| \leq c(t,p)\leq c_2\|\phi(t,p)\,z\|$, from where the formula for $\lambda_s^+(p,z)$ follows. On the other hand, we obtain that $c_1 \|\phi(t,p{\cdot}(-t))\,z\| \leq c(t,p{\cdot}(-t)) \leq c_2 \|\phi(t,p{\cdot}(-t))\,z\|$, so that
\[
\lambda_{s}'(p,z)=\limsup_{t\to \infty} \frac{\ln \|\phi(t,p{\cdot}(-t))\,z\|}{t}= \limsup_{t\to \infty} \frac{\ln c(t,p{\cdot}(-t))}{t} =\limsup_{t\to \infty} \frac{\ln c(-t,p)}{-t}\,,
\]
by the cocycle property of $c(t,p)$. The proof is finished.
\end{proof}
%%%%%%%%%%%%%%%%%%%%%%%%%%%%%%%%%%%%%%%%%%%%%%%%%%%%%%%%%%%%%%%%%%%%%%%%%%%%%%%%%%%%%%%%%%%%%%%%%%%%%%%%%%%%%%%%%%
%%%%%%%%%%%%%%%%%%%%%%%%%%%%%%%%%%%%%%%%%%%%%%%%%%%%%%%%%%%%%%%%%%%%%%%%%%%%%%%%%%%%%%%%%%%%%%%%%%%%%%%%%%%%%%%%%%%%%%%%%%%%%
\section{Linear-dissipative problems over a compact base flow}\label{sect-linear dissipative}\noindent
In this section we concentrate on the linear-dissipative case, that is, for the family of problems~\eqref{pdefamilynl} over a compact and connected base flow $P$ we assume the existence of a zone around $0$   where the dissipative term is negligible and the problems are linear. In this case the dynamical behaviour of the linear part has strong implications in the general nonlinear dynamics. We keep the notation and terminology introduced in the previous section.
\par
First of all, the 1-dim continuous linear cocycle  $c(t,p)$ in~\eqref{c} permits to characterize the nontrivial sections $A(p)$ of the attractor, that is, when $b(p)\gg 0$. Another characterization of a pullback nature is offered, also in terms of the linear dynamics.
\begin{prop}\label{prop-equivalentes}
Assume conditions $(c1)$-$(c5)$ on $g$ with $r_0>0$ in $(c5)$, and let us fix a $z_0\in X^\gamma$, $z_0\gg 0$. Then, given a $p\in P$, the following conditions are equivalent:
\begin{itemize}
\item[(i)] $b(p)\gg 0$;
\item[(ii)] $\des\sup_{t\leq 0} c(t,p)<\infty$;
\item[(iii)] $\des\inf_{t\geq 0}\|\phi(t,p{\cdot}(-t))\,z_0\|>0$.
\end{itemize}
\end{prop}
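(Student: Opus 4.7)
The plan is to prove the three equivalences by the route $(ii)\Leftrightarrow(iii)$, $(i)\Rightarrow(iii)$, $(ii)\Rightarrow(i)$; the linear-dissipative hypothesis $r_0>0$ enters decisively only in the last implication. For $(ii)\Leftrightarrow(iii)$, the argument used in the proof of Proposition~\ref{prop-exponentes Lyap} yields constants $c_1,c_2>0$ (depending on $z_0$) with $c_1\,\|\phi(t,q)\,z_0\|\leq c(t,q)\leq c_2\,\|\phi(t,q)\,z_0\|$ for all $t\geq 0$ and $q\in P$. Specializing to $q=p{\cdot}(-t)$ and using the cocycle identity $c(t,p{\cdot}(-t))\,c(-t,p)=1$, condition (iii) translates into $\inf_{t\geq 0}c(t,p{\cdot}(-t))>0$, equivalently $\sup_{t\geq 0}c(-t,p)<\infty$, which is (ii).

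For $(i)\Rightarrow(iii)$, I would use that $b$ is an equilibrium, so $b(p)=u(t,p{\cdot}(-t),b(p{\cdot}(-t)))$ for every $t\geq 0$. Since $g\leq 0$ on $X^\gamma_+$ by (c2), the parabolic comparison principle gives the domination $u(t,q,z)\leq\phi(t,q)\,z$ for every $z\in X^\gamma_+$, hence $b(p)\leq \phi(t,p{\cdot}(-t))\,b(p{\cdot}(-t))$. Compactness of $\A$ in $P\times X^\gamma$ together with $e_0\in\Int X^\gamma_+$ produces $M>0$ with $b(q)\leq M\,e_0$ for every $q\in P$; positivity of $\phi$ then gives $b(p)\leq M\,\phi(t,p{\cdot}(-t))\,e_0$. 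Taking sup-norms, $\|\phi(t,p{\cdot}(-t))\,e_0\|\geq \|b(p)\|/M>0$ uniformly in $t\geq 0$, and the norm equivalence from the previous paragraph transfers this to the chosen $z_0$.

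The core of the argument is $(ii)\Rightarrow(i)$. Set $K:=\sup_{t\leq 0}c(t,p)<\infty$ and choose $\delta>0$ with $\delta K<r_0$. Define $\tilde z:(-\infty,0]\to X^\gamma$ by $\tilde z(t):=\delta\,c(t,p)\,e(p{\cdot}t)$. By~\eqref{c} and the cocycle property of $c$, $\tilde z$ is a backward mild solution of~\eqref{linealizada} with $\tilde z(0)=\delta\,e(p)$. Since $\|e(p{\cdot}t)\|_\infty=1$, one has $\|\tilde z(t)\|_\infty=\delta\,c(t,p)\leq \delta K<r_0$ and $\tilde z(t)\geq 0$ pointwise, so condition (c5)---the only place where $r_0>0$ is essential---forces $g(p{\cdot}t,x,\tilde z(t)(x))=0$ for all $t\leq 0$ and $x\in\bar U$. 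The variation-of-constants formula for~\eqref{pdefamilynl} thus collapses to that for~\eqref{linealizada}, so $\tilde z$ is simultaneously a backward mild solution of the full nonlinear family, and consequently $(p,\delta\,e(p))$ admits a bounded full backward $\tau$-orbit. Therefore $\delta\,e(p)\in A(p)$, whence $b(p)\geq \delta\,e(p)\gg 0$.

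The step I expect to be most delicate is the last one: promoting a bounded backward linear trajectory confined to the linear zone $\{|y|\leq r_0\}$ into a full bounded backward orbit of the nonlinear semiflow, and hence to a point of $\A$. This rests on uniqueness of mild solutions of~\eqref{pdefamilynl} combined with the identical vanishing of $g$ along $\tilde z$. A smaller technical point is the bound $b(q)\leq M\,e_0$ in the Dirichlet case, where the order in $X^\alpha$ also encodes the sign of the normal derivative on $\partial U$; it is handled using $e_0\in\Int X^\alpha_+$ and the compactness of $\A$ in $P\times X^\alpha$.
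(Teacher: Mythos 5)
Your proof is correct, and where the paper actually writes out an argument — the equivalence (ii)$\Leftrightarrow$(iii) via the continuous separation sandwich $c_1z_0\leq e(q)\leq c_2z_0$ and the identity $c(t,p{\cdot}(-t))=1/c(-t,p)$ — you do exactly the same thing. The difference is in (i)$\Leftrightarrow$(ii): the paper simply delegates it to Proposition~5.3 of~\cite{calaobsa}, whereas you supply a self-contained proof, closing the cycle as (i)$\Rightarrow$(iii)$\Rightarrow$(ii)$\Rightarrow$(i). Your (ii)$\Rightarrow$(i) step — the backward trajectory $\delta\,c(t,p)\,e(p{\cdot}t)$ confined to the zone $\{0\le y\le r_0\}$ where $g$ vanishes, promoted by uniqueness of mild solutions to a bounded entire orbit of $\tau$ and hence a point of $\A$ — is precisely the device the paper itself uses later in the proof of Theorem~\ref{teor-principal spectrum}~(s5.i), so it is fully consistent with the authors' toolkit; your choice $\delta K<r_0$ is where the linear-dissipative hypothesis enters, correctly isolated. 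Your (i)$\Rightarrow$(iii) argument (equilibrium property of $b$, the linear domination $u(t,q,z)\le\phi(t,q)\,z$ for $z\ge 0$ coming from (c2), and the uniform bound $b(q)\le M e_0$ from compactness of $\A$ and $e_0\in\Int X^\gamma_+$) is a clean direct route that the paper does not spell out; it buys a proof independent of the external reference at the cost of invoking a standard parabolic comparison principle. No gaps: the normalization $\|e(q)\|=1$ in the sup-norm (valid also in the Dirichlet case, as the paper notes) justifies $\|\tilde z(t)\|_\infty=\delta\,c(t,p)$, and $b(p)\ge\delta\,e(p)$ with $\delta\,e(p)\in\Int X^\gamma_+$ does give $b(p)\gg 0$.
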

\begin{proof}
The proof of (i)$\Leftrightarrow$(ii) can be extracted from that of Proposition~5.3 in Caraballo et al.~\cite{calaobsa}. To prove~(ii)$\Leftrightarrow$(iii), as done in the proof of Proposition~\ref{prop-exponentes Lyap}, we take constants $c_1, c_2>0$ such that $c_1z_0\leq e(p)\leq c_2z_0$ for any $p\in P$. Then, applying the monotonicity of $\tau_L$, for any $t\geq 0$,
$c_1\phi(t,p{\cdot}(-t))\,z_0 \leq \phi(t,p{\cdot}(-t))\,e(p{\cdot}(-t)) \leq c_2 \phi(t,p{\cdot}(-t))\,z_0$, and then
$c_1\phi(t,p{\cdot}(-t))\,z_0 \leq c(t,p{\cdot}(-t))\,e(p) \leq c_2 \phi(t,p{\cdot}(-t))\,z_0$  by~\eqref{c}. Now, since the sup-norm is monotone, for any $t\geq 0$,
\[
c_1\|\phi(t,p{\cdot}(-t))\,z_0\| \leq c(t,p{\cdot}(-t))=\frac{1}{c(-t,p)} \leq c_2 \|\phi(t,p{\cdot}(-t))\,z_0\|\,,
\]
from where the equivalence follows immediately. The proof is finished.
\end{proof}
\begin{nota}\label{nota-1 purely}
It is important to note that $\sup_{t\leq 0} c(t,p)<\infty$ is a necessary condition for  $b(p)\gg 0$ independently of whether $r_0>0$ or $r_0=0$ in condition (c5) (once more, see the proof of Proposition~5.3 in~\cite{calaobsa}). However, it is not a sufficient condition in the purely dissipative case. We will return to this issue  in Section \ref{sect-purely dissipative}.
\end{nota}
Next we study some structural properties of the attractor in terms of the location of the principal spectrum $\Sigma_{\text{pr}}=[\alpha_P,\lambda_P]$ in the real line. The situations considered  correspond to the idea of the principal spectrum being pulled from left to right in the real line. More precisely, for $\alpha_P<0$ we distinguish the three possible situations (s1) with $\lambda_P <0$, (s2) with $\lambda_P =0$ and (s3) with $\lambda_P >0$;  $\alpha_P=0$ in situation (s4) and $\alpha_P>0$ in situation (s5).  Note that the case $\Sigma_{\text{pr}}=\{0\}$ fits in (s4). The exponential convergence to $0$ when $\lambda_P < 0$ and the uniform persistence when $\alpha_P>0$ are standard results in more general settings: see Mierczy{\'n}ski and Shen~\cite{mish08} and Novo et al.~\cite{noos7}. Anyway, statements and proofs are included for the sake of completeness.
\par
Ergodic measures are always meant with respect to the flow on $P$.
\begin{teor}\label{teor-principal spectrum}
Assume conditions $(c1)$-$(c5)$ on $g$ with $r_0>0$ in $(c5)$. Then:
\begin{itemize}
\item[(s1)] If $\alpha_P\leq\lambda_P < 0$, then $b\equiv 0$ and $\A$ is the trivial set $P\times \{0\}$. Moreover, $\A$ is uniformly exponentially stable, that is, for any  $0<\varepsilon< |\lambda_P|$ there is a $C_\varepsilon>0$ such that $\|u(t,p,z)\|\leq C_\varepsilon\, e^{(\lambda_P+\varepsilon)t}\|z\|$ for any $t\geq 0$, $p\in P$ and $z\in X$.
\item[(s2)]  If $\alpha_P<0=\lambda_P$, then,  on the one hand (because $\alpha_P<0$) there exists an ergodic measure $\nu$ such that $b(p)=0$ for almost every $p$ with respect to $\nu$, and on the other hand  there exists  at least one $p\in P$ such that $b(p)\gg 0$, so that the attractor is nontrivial.
\item[(s3)] If  $\alpha_P < 0 <\lambda_P$, then there exists an ergodic measure $\nu$ as in {\rm(s2)}, and (because $\lambda_P>0$) there exists an ergodic measure $\mu$ such that $b(p)\gg 0$, $\lim_{t\to \infty} c(t,p)=\infty$ and $\limsup_{t\to \infty} \|b(p{\cdot}t)\|\geq r_0$ for almost every $p$ with respect to $\mu$, and the attractor $\A$ is nontrivial.
\item[(s4)] If  $\alpha_P = 0 \leq \lambda_P$, then in every minimal set $M$ in $P$ there is some $p\in M$ such that  $b(p)\gg 0$ and the attractor is nontrivial. If besides  $\lambda_P>0$, there is an ergodic measure $\mu$ as in {\rm(s3)}.
\item[(s5)] If $0<\alpha_P\leq  \lambda_P$, then:
\begin{itemize}
\item[(i)] $b$ is uniformly strongly positive, i.e., there is a $z\gg 0$ such that $b(p)\geq z$ for $p\in P$. Also $\lim_{t\to \infty} c(t,p)=\infty$ and $\limsup_{t\to \infty} \|b(p{\cdot}t)\|\geq r_0$ for any $p\in P$. Besides, the attractor contains a ruled surface inside the principal bundle; namely, $\{(p,re(p))\mid p\in P,\, 0<r\leq r_1 \}\subset \A\cap (P\times \Int X^\gamma_+)$ for some $r_1>0$.
\item[(ii)] The semiflow $\tau$ is  uniformly persistent in the interior of the positive cone $\Int X^\gamma_+$; namely, there is a $z_0\gg 0$ such that for  any $z\gg 0$ there exists a $t^*=t^*(z)$ such that $u(t,p,z)\geq z_0$ for any $p\in P$ and $t\geq t^*$.
\end{itemize}
\end{itemize}
\end{teor}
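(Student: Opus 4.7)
The plan is to translate each claim about the upper boundary map $b$ into a property of the scalar 1\nbd dim cocycle $c(t,p)$ via Proposition~\ref{prop-equivalentes}, and then feed in the ergodic information of~\eqref{medidas erg} together with the uniform exponential bounds coming from Sacker-Sell spectral theory on $\Sigma_{\text{pr}}=[\alpha_P,\lambda_P]$. The standing technical tools are: the comparison $0\leq u(t,p,z)\leq\phi(t,p)\,z$ for $z\geq 0$, which follows from (c2) and the monotonicity of $\tau_L$; the identity $u=\phi$ on the linear strip $|y|\leq r_0$, guaranteed by (c5); and oddness (c4), which propagates positive-cone estimates to arbitrary data via $|u(t,p,z)|\leq u(t,p,|z|)$.

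Case~(s1) is immediate: Sacker-Sell gives $\|\phi(t,p)\|\leq C_\varepsilon\,e^{(\lambda_P+\varepsilon)t}$ uniformly in $p$, and the comparison propagates the same bound to $u$, forcing $b\equiv 0$. For the first halves of~(s2) and~(s3), the ergodic measure $\nu$ of~\eqref{medidas erg} yields $\ln c(t,p)/t\to\alpha_P<0$ as $t\to-\infty$ $\nu$\nbd a.e., so $c(t,p)\to\infty$ backwards and Proposition~\ref{prop-equivalentes} gives $b(p)=0$ there. The existence of a $p$ with $b(p)\gg 0$ in~(s2) rests on Sacker-Sell qualitatively: since $0=\lambda_P\in\Sigma_{\text{pr}}$, there is no exponential dichotomy at $0$, and the theorem produces $p^*\in P$ with a bounded orbit in the 1\nbd dim principal bundle, equivalently $\sup_{t\leq 0}c(t,p^*)<\infty$, whence $b(p^*)\gg 0$. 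In~(s3), the measure $\mu$ of~\eqref{medidas erg} with $\ln c(t,p)/t\to\lambda_P>0$ as $t\to\pm\infty$ $\mu$\nbd a.e.\ gives $b(p)\gg 0$ and $c(t,p)\to\infty$ on that full measure set; the bound $\limsup\|b(p{\cdot}t)\|\geq r_0$ follows by contradiction, since otherwise $b$ would eventually lie in the linear strip, forcing $b(p{\cdot}t)=\phi(t,p)\,b(p)$ and hence $\|b(p{\cdot}t)\|\gtrsim c(t,p)\to\infty$, absurd.

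For~(s4), restrict to a minimal $M\subset P$: its principal spectrum $[\alpha_M,\lambda_M]$ lies in $[0,\lambda_P]$, so either $\alpha_M>0$ (the uniform Sacker-Sell lower bound forces $\sup_{t\leq 0}c(t,p)<\infty$ for every $p\in M$) or $\alpha_M=0$ (the Sacker-Sell theorem again gives one such $p$), and in both cases $b(p)\gg 0$ for some $p\in M$. If moreover $\lambda_P>0$, the $\mu$\nbd statement of~(s3) applies verbatim. For~(s5), the hypothesis $\alpha_P>0$ yields $c(t,p)\geq C_\varepsilon^{-1}\,e^{(\alpha_P-\varepsilon)t}$ uniformly on $\{t\geq 0\}\times P$, which inverts to $\sup_{t\leq 0}c(t,p)\leq C_\varepsilon$ uniformly in $p$ and hence $b(p)\gg 0$ everywhere. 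The uniform persistence in~(ii) is the classical consequence of positive linearization: the principal bundle expands uniformly, $\tau=\tau_L$ inside the linear strip, and the dissipativity~(c3) traps every positive orbit in an absorbing set bounded below by some $z_0\gg 0$; applied pullback to~\eqref{b(p)}--\eqref{b(p)-dir}, this also delivers the uniform lower bound~(i). The ruled surface is placed in $\A$ by choosing $r_1\leq r_0/\sup_{p,\,s\geq 0}c(-s,p)$ and checking that $s\mapsto(p{\cdot}(-s),\,r\,c(-s,p)\,e(p{\cdot}(-s)))$ stays in the linear strip and is therefore a bounded negative semiorbit of $\tau$ through $(p,re(p))$; the remaining assertions $\lim_{t\to\infty}c(t,p)=\infty$ and $\limsup\|b(p{\cdot}t)\|\geq r_0$ are argued exactly as in~(s3).

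The main obstacle is the existence of a bounded cocycle orbit in the second half of~(s2) and inside~(s4) when the restricted principal spectrum collapses to $\{0\}$; there, neither~\eqref{medidas erg} nor the uniform Sacker-Sell bounds single out a useful $p$, and the only route seems to be the qualitative Sacker-Sell existence result for bounded solutions whenever the 1\nbd dim spectrum contains $0$ and exponential dichotomy fails. Modulo this step, the full theorem reduces to combining Proposition~\ref{prop-equivalentes} with the ergodic identity~\eqref{medidas erg} and the pullback construction of $b$.
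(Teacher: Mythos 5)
Your overall strategy coincides with the paper's: everything is funnelled through Proposition~\ref{prop-equivalentes} and the ergodic identities~\eqref{medidas erg}, and your treatments of (s1), (s3), (s5) and the $\alpha_M>0$ branch of (s4) match the paper's arguments in substance (the paper verifies uniform persistence in (s5.ii) by checking the hypothesis $\phi(r,p)\,z\gg 2z$ of Theorem~3.3 in~\cite{noos7} rather than by the informal ``absorbing set bounded below'' argument you sketch, but the content is the same). The genuine gap is exactly the one you flag yourself: producing a point $p$ with $\sup_{t\leq 0}c(t,p)<\infty$ in the second half of (s2). The ``qualitative Sacker--Sell existence result'' you invoke --- absence of an exponential dichotomy at $0$ implies a nontrivial bounded orbit --- is Selgrade's theorem, and it is available only over a minimal (or chain recurrent) base. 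That is why your $\alpha_M=0$ branch of (s4) is legitimate (there you work inside a minimal set $M$), but in (s2) the relevant base is the whole of $P$, which is compact and connected yet in general not minimal, and failure of a dichotomy over $P$ does not by itself yield a bounded orbit.

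The paper closes this gap with a contradiction argument that your proposal is missing: assume the set $\mathcal{B}$ of bounded orbits of the $1$-dim flow $L$ is trivial; then Selgrade's theorem gives an exponential dichotomy over every minimal set $M\subset P$, and since $\Sigma(M)\subset[\alpha_P,0]$ each such dichotomy has full stable and trivial unstable bundle; Lemma~12 and Theorem~2 of Sacker and Sell~\cite{sase76} then allow these compatible dichotomies to be glued into an exponential dichotomy over all of $P$, contradicting the fact that $\lambda_P=0$ lies in $\Sigma_{\text{pr}}$. Without this (or an equivalent) extension step your proof of (s2) is incomplete. A second, minor point: in (s1) the bound $\|u(t,p,z)\|\leq C_\varepsilon\, e^{(\lambda_P+\varepsilon)t}\|z\|$ does not follow from the $1$-dim Sacker--Sell spectrum alone; one needs the continuous separation (property (5)) to control the complementary bundle $X_2(p)$, which is what the reference to Proposition~5 of~\cite{cardoso} supplies.
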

\begin{proof}
(s1)  The uniform exponential stability has been proved in Proposition~5 in Cardoso et al.~\cite{cardoso}. Just note that if $\lambda_P<0$, the continuous spectrum for the linear semiflow $\tau_L$ lies strictly on the negative real semiaxes. By \eqref{b(p)}, it must be $b\equiv 0$.
\par
(s2) By \eqref{medidas erg}, we can  take an ergodic measure $\nu$  such that
$\lim_{t\to -\infty} \ln c(t,p)/t=\alpha_P<0$ for almost every $p$ with respect to  $\nu$. Then,   $\lim_{t\to -\infty} c(t,p)=\infty$ for all such $p$, and according to Proposition~\ref{prop-equivalentes}, $b(p) = 0$.
\par
On the other hand, we now pay attention to the value $\lambda_P=0$. If for some $p_0\in P$ there is a nontrivial bounded orbit for the linear skew-product flow $L$ on $P\times \R$ determined by the linear 1-dim cocycle $c(t,p)$, or in other words, if $\{c(t,p_0)\mid t\in\R\}$ is bounded, then  $b(p_0)\gg 0$ by Proposition~\ref{prop-equivalentes}. So, let us argue by contradiction and let us assume that there are no nontrivial bounded orbits for $L$, that is,
\[
\mathcal{B}=\{(p,r)\in P\times \R\,\,/\,\, |c(t,p)r|\; \text{is uniformly bounded in}\; t\}=P\times\{0\}\,.
\]
Then, for any minimal set $M\subset P$, a classical result by Selgrade~\cite{selg} says that $L$ admits an exponential dichotomy over $M$. Besides, in this case, since the continuous spectrum over $M$, $\Sigma(M)\subset \Sigma(P)=[\alpha_P,0]$,   necessarily all the sections of the stable set $\mathcal{S}(p)$ are one-dimensional  as well as all the sections of the unstable set are trivial, i.e., $\mathcal{U}(p)=\{0\}$ for every $p\in M$.
%Lemma 6 and Lemma 7 in Sacker and Sell 1978
Since the same happens for any minimal set, one can apply Lemma~12 and Theorem~2 in Sacker and Sell~\cite{sase76} to conclude that $L$ admits an exponential dichotomy over $P$, that is, $0$ belongs to the resolvent set of $L$. But this is a contradiction with the fact that $\lambda_P=0$ lies in the continuous spectrum, and we are done.
\par
(s3) The measure $\nu$ is provided by $\alpha_P<0$ as in (s2). Now, for $\lambda_P> 0$ we can  take an ergodic measure $\mu$ such that
$\lim_{t\to \pm\infty} \ln c(t,p)/t=\lambda_P>0$ for almost every $p$ with respect to  $\mu$. Then, on the one hand  $\lim_{t\to -\infty} c(t,p)=0$ for all such $p$, and according to Proposition~\ref{prop-equivalentes}, $b(p)\gg 0$. On the other hand, $\lim_{t\to\infty}c(t,p)=\infty$ for all such $p$. At this point one can argue as in the proof of Proposition~4.14~(b) in Langa et al.~\cite{laos} to affirm that there exists a sequence $(t_n)_n\uparrow \infty$ (which depends on $p$) such that $\|b(p{\cdot}t_n)\|>r_0$ for any $n\geq 1$, so that in particular $\limsup_{t\to \infty} \|b(p{\cdot}t)\|\geq r_0$ for almost every $p$ with respect to $\mu$.
\par
(s4) Maintaining the notation in the proof of (s2), fixed a minimal set $M\subseteq P$ the continuous spectrum of $L$ over $M$ satisfies $\Sigma(M)=[\alpha_M,\lambda_M]\subseteq [0,\lambda_P]$. It might be  $\alpha_M=0$. Then, $L$ does not have an exponential dichotomy over $M$, and since $M$ is minimal the aforementioned result by Selgrade~\cite{selg} implies the existence of a nontrivial bounded orbit for $L$, that is, $\{c(t,p)\mid t\in \R\}$ is bounded for some $p\in M$, and then   $b(p)\gg 0$ by Proposition~\ref{prop-equivalentes}. Otherwise  $\alpha_M>0$. Then, there is an ergodic measure $\nu_M$ over $M$ such that $\lim_{t\to -\infty} \ln c(t,p)/t=\alpha_M>0$ for almost every $p\in M$ with respect to  $\nu_M$ and once more by Proposition~\ref{prop-equivalentes} $b(p)\gg 0$ for all such $p$. Finally, we can argue as in (s3) for $\lambda_P> 0$.
\par
(s5) We now assume that the principal spectrum is contained in the set of positive real numbers.
(i) First of all, since $\Sigma_{\text{pr}} =[\alpha_P,\lambda_P]\subset (0,\infty)$, $0$ lies in the resolvent set of the linear skew-product semiflow $L$ on $P\times \R$, that is, it has an exponential dichotomy over $P$, in fact with full unstable subspace and null stable subspace. More precisely, there exist $C>0$ and $\beta>0$ such that
\begin{equation*}%\label{de}
c(t,p)\,c(s,p)^{-1}\leq C\,e^{-\beta (s-t)}\quad \text{for} \;p\in P\; \text{and}\; t\leq s\,.
\end{equation*}
From this we first get that $\sup\{c(t,p)\mid p\in P,\,t\leq 0\}\leq C<\infty$, so that  $b(p)\gg 0$ for any $p\in P$ by Proposition~\ref{prop-equivalentes}.
Second, by taking $t=0$ and  $c_0=1/C$, we get that $c(t,p)\geq c_0\, e^{\beta t}$ for $t\geq 0$ and $p\in P$,  so that $\lim_{t\to\infty} c(t,p)=\infty$ for any $p\in P$. Then, $\limsup_{t\to \infty} \|b(p{\cdot}t)\|\geq r_0$ for all $p\in P$, as in (s3).
\par
Moreover, taking $r_1=r_0/C>0$, we have that $r_1 c(t,p)\,e(p{\cdot}t)(x)\leq r_0$ for any $p\in P$, $t\leq 0$ and $x\in\bar U$, and the nontrivial segment $\{(p,re(p))\mid 0<r\leq r_1\}\subset \A\cap (P\times \Int X^\gamma_+)$: just note that for $0<r\leq r_1$, the map $z(s)=rc(s,p)\,e(p{\cdot}s)$ for $s\leq 0$ provides a backward semiorbit for $\tau_L$, i.e., $\phi(t,p{\cdot}s)\,z(s)=z(t+s)$ for $s\leq 0$ and  $0\leq t\leq -s$. Besides, it remains in the zone where the problems are linear. Thus, it is a solution of the nonlinear problem too, and we can continue this solution to have a bounded entire orbit, so that $(p,re(p))\in\A$. Note that in this situation, $r_1 e(p)\leq b(p)$ for any $p\in P$ and, as in the proof of Proposition~\ref{prop-exponentes Lyap}, we can find a $z\gg 0$ such that $b(p)\geq z$ for any $p\in P$.
\par
(ii) Let us fix a $z\gg 0$ and, once more as in the proof of Proposition~\ref{prop-exponentes Lyap}, take constants $c_1, c_2>0$ such that $c_1z\leq e(p)\leq c_2z$ for any $p\in P$. Then, by the monotonicity of $\tau_L$, for $t\geq 0$,
\[
\phi(t,p)\,z\geq \frac{1}{c_2}\,\phi(t,p)\,e(p) =\frac{1}{c_2}\,c(t,p)\,e(p{\cdot}t)\geq \frac{c_1}{c_2}\,c(t,p)\,z\geq \frac{c_1}{c_2}\,c_0\, e^{\beta t}\,z\,,
\]
and taking $r>0$ big enough so that $\frac{c_1}{c_2}\,c_0\, e^{\beta r}> 2$, we get that $\phi(r,p)\,z \gg 2z$ for any $p\in P$. To finish, just note that this is the condition required in Theorem~3.3 in~\cite{noos7} (taking the compact invariant set  $K=P\times\{0\}\equiv P$) to guarantee the uniform persistence of $\tau$ in the interior of the positive cone. In fact, the proof of the cited theorem permits to assert that the time $t^*(z)$ in the statement can be taken common for all $p\in P$.
\end{proof}
\begin{notas}\label{nota-2 purely}
 1. Note that some of the arguments used in the proof identically work  in the purely dissipative case, that is, when $r_0=0$ in condition (c5). More precisely, the proofs of items~(s1) and~(s5.ii) as well as the argument developed for $\alpha_P<0$ are independent of the linear-dissipative structure.
\par
 2. In situation (s2), with $\alpha_P<0=\lambda_P$, there always coexist elements $p$ where $b(p)=0$ with others where $b(p)\gg 0$. In contrast with this, in situation (s4) with $\alpha_P=0\leq \lambda_P$, it might be $b(p)\gg 0$ for any $p\in P$. The reader is referred to Example~\ref{eje a_0}~(ii). For more examples,  recall that when $P$ is  minimal and uniquely ergodic and $\alpha_P=\lambda_P=0$, either $\A$ is a wide attractor, meaning that $b(p)\gg 0$ for any $p\in P$, or it is a pinched compact set and $b$ vanishes over an invariant residual set of either null or full measure (see Theorems~5.1 and~5.2 in~\cite{calaobsa}), often showing a chaotic behaviour in a precise sense  (see Theorem~5.9  in~\cite{calaobsa}).
 \par
3. Related to the last comment, assume that $\alpha_P\leq 0\leq \lambda_P$. Assume also that there exists an ergodic measure $\nu$ with associated Lyapunov exponent $\lambda=0$, that is, $\lim_{t\to\pm\infty} \ln c(t,p)/t=0$ for almost every $p$ with respect to $\nu$. This happens for sure if $\alpha_P= 0$ or if $\lambda_P=0$: see \eqref{medidas erg}. In line with a classical result by Shneiberg~\cite{shne}, it can be deduced that for almost every $p\in P$ with respect to $\nu$ there exists a sequence $(t_n)_n\uparrow \infty$, which depends on $p$, such that $c(t_n,p)=1$ for $n\geq 1$  ($p$ is said to be recurrent at $\infty$). In fact the set of recurrent points $P_{\rm{r}}$, which contains the points $p \in P$ such that $p{\cdot}t$ is recurrent at $\pm\infty$ for every $t \in \R$, is invariant and satisfies that $\nu(P_{\rm{r}})=1$. For all the details, see Proposition~5.5 in~\cite{calaobsa}.  Arguing as in the proof of Theorem~4.17~(a) in~\cite{laos}, if there is a $p\in P_{\rm{r}}$ with $b(p)\gg 0$, then the trajectory of $(p,b(p))$ inside the attractor remains in the zone where the problems are linear, that is,  $0\ll b(p{\cdot}t)\leq r_0$ for $t\in \R$, meaning that $b(p{\cdot}t)(x)\leq r_0$ for $t\in\R$ and $x\in\bar U$, and besides $\limsup_{t\to\infty} \|b(p{\cdot}t)\|=r_0$. We note that the set $P_{\rm{r}}$ has played a fundamental role in the study of the dynamics of linear-dissipative problems in the papers~\cite{calaobsa} and~\cite{laos}.
\par
On the other hand, if $p\in P_{\rm{r}}$ with $b(p)\gg 0$, then the section of the attractor  $A(p)$ is contained in the principal bundle, that is, $A(p)\subset X_1(p)$. To see it, take a $z\in A(p)\subseteq [-b(p),b(p)]$. Then,  the full orbit through $(p,z)$, written for simplicity as $(p{\cdot}t,z(t))$ for $t\in\R$, lies in the zone where the problems are linear, so that it is a full orbit for the linear semiflow $\tau_L$ as well. Recalling the properties of the continuous separation, we can decompose $z(t)=z_1(t)+z_2(t)\in X_1(p{\cdot}t)\oplus X_2(p{\cdot}t)$, $t\in\R$ and by the invariance property, $z_2(0)=\phi(t,p{\cdot}(-t))\,z_2(-t)$ and $z_2(-t)$ is bounded for $t\geq 0$, say  $\|z_2(-t)\|_\gamma\leq c_0$ for $t\leq 0$.  Then, using property (5) and relation~\eqref{c}, $\|z_2(0)\|_\gamma=\|\phi(t,p{\cdot}(-t))\,z_2(-t)\|_\gamma\leq \|z_2(-t)\|_\gamma M_0 \,e^{-\delta_0 t}\|\phi(t,p{\cdot}(-t))\,e(p{\cdot}(-t))\|_\gamma \leq  c_0\,M_0 \,e^{-\delta_0 t} c(t,p{\cdot}(-t))\,\|e(p)\|_\gamma$ for $t>0$. Here, since $p\in P_{\rm{r}}$, we can take a sequence $(t_n)\uparrow \infty$ such that $c(t_n,p{\cdot}(-t_n))=1/c(-t_n,p)=1$ for $n\geq 1$. Then, we can conclude that $\|z_2(0)\|_\gamma=0$ and thus $z=z_1(0)\in X_1(p)$, as we wanted. After this discussion, we affirm that for $p\in P_{\rm{r}}$ with $b(p)\gg 0$ and $\liminf_{t\to\infty} \|b(p{\cdot}t)\|=0$, the pullback attractor $\{A(p{\cdot}t)\}_{t\in  \R}$ for the process $S_p(\cdot,\cdot)$ generated  by the solutions of the non-autonomous parabolic equation obtained by the evaluation  of the coefficients along the trajectory of $p$ is chaotic in the sense of Li-Yorke: see Definition~4.22 and Theorem~4.23 in~\cite{laos} for the precise terms.
\par
Besides, if $P$ does not contain any minimal and periodic subsets,  then following the methods mentioned in N\'{u}\~{n}ez and Obaya~\cite{nuob} it is possible to find a map $a\in C(P)$ with $\int_P a\,d\nu=0$ such that the family of problems~\eqref{pdefamilynl} over  $P$ with linear coefficient $h(p{\cdot}t,x)+a(p{\cdot}t)$ has a global attractor which is fiber-chaotic in measure with respect to $\nu$ in the sense of Li-Yorke. If $\nu_1$ and $\nu_2$ are two ergodic measures on $P$ with null Lypunov exponent and $\supp(\nu_1)\cap \supp(\nu_2)\not=\emptyset$, the previous construction is congruent in the sense that it is possible to choose the same map $a\in C(P)$ for both measures. The above arguments justify  the presence in some cases of a  big set of dynamical unpredictability in the phase space. Even, when $\Sigma_{\text{pr}}=\{0\}$, this could be a set of
complete measure.
\end{notas}
We finish this section by adding the strict sublinearity condition (c6) on $g$, which permits to substantially extend the dynamical information that we have in some cases, namely, when $\lambda_P>0$ or $\alpha_P>0$. With this extra condition, a standard argument of comparison of solutions shows that the semiflow $\tau$ is sublinear in the positive cone, that is,  $u(t,p,\lambda\, z)\leq \lambda\, u(t,p,z)$ for  $\lambda\geq 1$, $p\in P$, $z\geq 0$ and $t\geq 0$.
\par
First of all, we state a preliminary result in the sublinear setting which gives conditions on $p_0$ so that in the positive cone, every orbit starting below  or above $b(p_0)$ approaches $b(p_0{\cdot}t)$ as $t\to \infty$.
\begin{teor}\label{teor-sublineal lin-dis}
Assume conditions $(c1)$-$(c6)$ on $g$, with $r_0>0$ in $(c5)$ and $(c6)$. If for some $p_0\in P$ we have that $b(p_0)\gg 0$ and $\limsup_{t\to\infty} c(t,p_0)=\infty$, then:
\begin{itemize}
\item[(i)] For any $0< z\leq b(p_0)$ it holds that $\limsup_{t\to \infty} \|u(t,p_0,z)\|\geq r_0$ and $\des\lim_{t\to\infty} b(p_0{\cdot}t)-u(t,p_0,z)=0$.
\item[(ii)] If $z\gg 0$ is such that $b(p_0)\leq z$, then $\des\lim_{t\to\infty} u(t,p_0,z)-b(p_0{\cdot}t)=0$.
\end{itemize}
\end{teor}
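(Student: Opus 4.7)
The plan is to prove both items by combining the strong monotonicity and strict sublinearity of $\tau$ (the latter a consequence of (c6)) with the hypothesis $\limsup_{t\to\infty}c(t,p_0)=\infty$, which forces nontrivial orbits to visit the nonlinear zone $\{|y|>r_0\}$ infinitely often. The first claim of (i) is the enabler for everything else.

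To prove $\limsup_{t\to\infty}\|u(t,p_0,z)\|\geq r_0$ for $0<z\leq b(p_0)$, I would argue by contradiction: if $\|u(t,p_0,z)\|<r_0$ for all $t\geq T$, condition (c5) makes the equation linear along the orbit beyond $T$, so $u(t,p_0,z)=\phi(t-T,p_0{\cdot}T)u(T,p_0,z)$ for $t\geq T$. By strong monotonicity $u(T,p_0,z)\gg 0$, so $u(T,p_0,z)\geq\delta\,e(p_0{\cdot}T)$ for some $\delta>0$; then $u(t,p_0,z)\geq\delta\,c(t-T,p_0{\cdot}T)\,e(p_0{\cdot}t)$. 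Using the cocycle identity $c(t-T,p_0{\cdot}T)=c(t,p_0)/c(T,p_0)$ and $\|e(p)\|=1$, we get $\|u(t,p_0,z)\|\geq \delta\,c(t,p_0)/c(T,p_0)$, which is unbounded, contradicting $\|u(\cdot,p_0,z)\|<r_0$. Applied to $z=b(p_0)$, this also yields $\limsup_{t\to\infty}\|b(p_0{\cdot}t)\|\geq r_0$, a fact I will use repeatedly.

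For the convergence $b(p_0{\cdot}t)-u(t,p_0,z)\to 0$ in (i), define (for $t$ so large that $u(t,p_0,z)\gg 0$, granted by strong monotonicity)
\[
\lambda(t):=\sup\{\mu\in[0,1]:\mu\,b(p_0{\cdot}t)\leq u(t,p_0,z)\}\in(0,1].
\]
By sublinearity, $u(s,p_0{\cdot}t,\lambda(t)b(p_0{\cdot}t))\geq\lambda(t)\,b(p_0{\cdot}(t+s))$, and monotonicity gives $\lambda(t+s)\geq\lambda(t)$; hence $\lambda(t)\uparrow\lambda_\infty\leq 1$. I would suppose $\lambda_\infty<1$ and pick a sequence $t_n\to\infty$ along which, using the first claim applied to $b(p_0)$, $\|b(p_0{\cdot}t_n)\|$ substantially exceeds $r_0$; by compactness of the attractor, pass to subsequences so that $p_0{\cdot}t_n\to p^*$, $u(t_n,p_0,z)\to v^*$, $b(p_0{\cdot}t_n)\to b^*$, with $\lambda_\infty b^*\leq v^*\leq b^*\leq b(p^*)$ (the last by the upper semicontinuity of $b$) and $b^*$ exceeding $r_0$ somewhere. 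Running the semiflow forward from $(p^*,b^*)$ through the nonlinear zone, strict sublinearity (c6) produces $u(s,p^*,\lambda_\infty b^*)\gg\lambda_\infty\,u(s,p^*,b^*)$ for some $s>0$; by continuity of $\tau$ and a compactness argument, this inequality transfers to a uniform gap $\lambda(t_n+s)\geq\lambda_\infty+\eta$ for $n$ large, contradicting $\lambda(t)\uparrow\lambda_\infty$. Thus $\lambda_\infty=1$, and a final compactness argument (using that $b(p_0{\cdot}t)$ lies in the compact set $\A$) promotes the ratio statement $\lambda(t)\to 1$ to norm-convergence $\|b(p_0{\cdot}t)-u(t,p_0,z)\|_\gamma\to 0$.

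For (ii) the argument is symmetric. Since $b(p_0)\leq z$ and $b(p_0)\gg 0$, one can pick $\mu_0\geq 1$ with $z\leq\mu_0\,b(p_0)$, so monotonicity plus sublinearity sandwiches $b(p_0{\cdot}t)\leq u(t,p_0,z)\leq\mu_0\,b(p_0{\cdot}t)$. Define $\mu(t):=\inf\{\mu\geq 1:u(t,p_0,z)\leq\mu\,b(p_0{\cdot}t)\}$; the same bookkeeping shows $\mu(t)$ is nonincreasing with limit $\mu_\infty\geq 1$, and the strict-sublinearity argument (again fed by $\limsup_{t\to\infty}\|b(p_0{\cdot}t)\|\geq r_0$) excludes $\mu_\infty>1$. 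The hard part in both items is turning the pointwise strict inequality in (c6), which is active only on $\{y>r_0\}$, into a quantitative gap in $\lambda(t)$ (resp.\ $\mu(t)$) visible along the original sequence $t_n$; this demands a careful limit/compactness argument that also copes with the fact that $b$ is merely upper semicontinuous, so $b^*$ may be strictly below $b(p^*)$ at the limit point.
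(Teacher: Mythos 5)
Your proof of the first claim of (i) is correct and is exactly the mechanism the paper invokes (it defers to Proposition~4.14~(b) of \cite{laos}): if the orbit stayed in $\{\|y\|<r_0\}$ it would be a linear orbit dominating $\delta\,c(t,p_0)/c(T,p_0)\,e(p_0{\cdot}t)$, contradicting $\limsup_{t\to\infty}c(t,p_0)=\infty$. The overall architecture of the convergence arguments (a monotone scaling function, compactness in $\A$, a strict gap from (c6), and the final promotion $\|b(p_0{\cdot}t)-u(t,p_0,z)\|\le (1-\lambda(t))\sup_{\A}\|b\|$) also matches the paper's, which refers to Theorem~5.1~(b) of \cite{laos} and is spelled out in this paper for the purely dissipative analogue (Theorem~\ref{teor-sublineal purely-dis}).

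However, there is a genuine gap in the improvement step of (i): you harvest the strictness of (c6) at the \emph{scaled-down upper} orbit, claiming $u(s,p^*,\lambda_\infty b^*)\gg\lambda_\infty\,u(s,p^*,b^*)$ for some $s>0$ when $\lambda_\infty<1$. Writing $W=\lambda_\infty\,u(s,p^*,b^*)$, the function $W$ is a \emph{strict} subsolution of the original problem only where $W>r_0$, because (c6) gives $\lambda_\infty\,g(p,x,Y)<g(p,x,\lambda_\infty Y)$ precisely when $\lambda_\infty Y>r_0$; if $\lambda_\infty Y\le r_0$ both sides are $\le 0$ and $0$ respectively, or both vanish, and no strict inequality is available. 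Since the first claim only yields $\limsup_{t\to\infty}\|b(p_0{\cdot}t)\|\ge r_0$ (your ``substantially exceeds $r_0$'' is not justified), one may have $\|b^*\|=r_0$ and hence $\|\lambda_\infty b^*\|<r_0$; in that boundary scenario the whole comparison between $u(s,p^*,\lambda_\infty b^*)$ and $\lambda_\infty u(s,p^*,b^*)$ can take place inside the linear zone $\{0\le y\le r_0\}$, the two functions coincide, and no gap $\eta>0$ is produced, so the contradiction with $\lambda(t)\uparrow\lambda_\infty$ never materialises. The fix is to scale \emph{up} the lower orbit, as in the paper's detailed analogue: work with $\Lambda(t)=\inf\{\lambda\ge 1\mid b(p_0{\cdot}t)\le\lambda\,u(t,p_0,z)\}=1/\lambda(t)$, choose $(t_n)$ realising $l=\limsup_{t\to\infty}\|u(t,p_0,z)\|\ge r_0$ so that $z_1=\lim u(t_n,p_0,z)$ has $\|z_1\|=l$, and apply (c6) (via the auxiliary family with nonlinearity $\Lambda_\infty\,g(p,x,y/\Lambda_\infty)$ and the strong comparison principle) at $\Lambda_\infty z_1$: since $\Lambda_\infty>1$ and $\|z_1\|\ge r_0$, the function $\Lambda_\infty z_1$ necessarily exceeds $r_0$ somewhere and the strict inequality $g(p^*,x_0,\Lambda_\infty z_1(x_0))<\Lambda_\infty\,g(p^*,x_0,z_1(x_0))$ is guaranteed. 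Your part (ii) is set up in this ``scale-up'' direction ($\mu_\infty\ge 1$ acting on $b^*$ with $\|b^*\|\ge r_0$) and does not suffer from this problem.
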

\begin{proof}
If $b(p_0)\gg 0$ and $\limsup_{t\to\infty} c(t,p_0)=\infty$, then $\limsup_{t\to \infty} \|b(p_0{\cdot}t)\|\geq r_0$: once more, see the proof of Proposition~4.14~(b) in Langa et al.~\cite{laos}. Now, for $0< z\leq b(p_0)$, note that by the strong monotonicity $u(t,p_0,z)\gg 0$ for any $t>0$. For this reason we can assume without loss of generality that $0\ll z\leq b(p_0)$ and the argument to prove that $\limsup_{t\to \infty} \|u(t,p_0,z)\|\geq r_0$ is just the same as before.
\par
For the rest of the proof, the reader is referred to the proof of Theorem~5.1~(b) in~\cite{laos}, for it identically applies to the present situation.
\end{proof}
\begin{teor}\label{teor-b continua}
Assume conditions $(c1)$-$(c6)$ on $g$, with $r_0>0$ in $(c5)$ and $(c6)$.
\begin{itemize}
\item[(i)] If $\lambda_P>0$, there exists an ergodic measure $\mu$ such that the dynamical description in Theorem~$\ref{teor-sublineal lin-dis}$ applies for almost every $p$ with respect to $\mu$.
\item[(ii)] If $\alpha_P>0$, then Theorem~$\ref{teor-sublineal lin-dis}$ applies to any $p\in P$, and $b:P\to X^\gamma$ is continuous.
\end{itemize}
\end{teor}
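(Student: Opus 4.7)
The plan is to verify in each case the hypotheses of Theorem~\ref{teor-sublineal lin-dis}, and then in (ii) to promote the known semicontinuity of $b$ to continuity via a pullback sandwich that exploits the uniform strong positivity of $b$ together with the sublinearity~(c6).

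For (i), with $\lambda_P>0$, formula~\eqref{medidas erg} supplies an ergodic measure $\mu$ with $\lim_{t\to\pm\infty}\ln c(t,p)/t=\lambda_P$ for $\mu$-a.e.\ $p$. For such a $p$, as $t\to-\infty$ the quotient forces $c(t,p)\to 0$ exponentially, so continuity on compact intervals gives $\sup_{t\leq 0}c(t,p)<\infty$, and Proposition~\ref{prop-equivalentes} yields $b(p)\gg 0$. As $t\to+\infty$ the same quotient produces $c(t,p)\to\infty$, hence $\limsup_{t\to\infty}c(t,p)=\infty$. Both hypotheses of Theorem~\ref{teor-sublineal lin-dis} therefore hold $\mu$-a.e., proving (i). For the first part of (ii), case (s5) of Theorem~\ref{teor-principal spectrum} already guarantees $b(p)\gg 0$ and $\lim_{t\to\infty}c(t,p)=\infty$ for \emph{every} $p\in P$, so Theorem~\ref{teor-sublineal lin-dis} applies pointwise on all of $P$.

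For the continuity of $b$ in (ii), fix a sequence $p_n\to p$. The semicontinuity of $b$ stated in Proposition~\ref{prop-0 o positivo} (closedness of the hypograph $\{(q,z)\mid z\leq b(q)\}$) directly yields $\limsup_n b(p_n)\leq b(p)$ in the order sense, so only $\liminf_n b(p_n)\geq b(p)$ remains. By (s5) of Theorem~\ref{teor-principal spectrum} pick $z_0\gg 0$ with $b(q)\geq z_0$ for every $q\in P$. Invariance of $\A$ and monotonicity of $\tau$ give, for each fixed $t>0$ and each $n$,
\[
b(p_n)=u(t,p_n{\cdot}(-t),b(p_n{\cdot}(-t)))\geq u(t,p_n{\cdot}(-t),z_0),
\]
and continuity of $u$ in the initial data yields $\liminf_n b(p_n)\geq u(t,p{\cdot}(-t),z_0)$. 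Thus continuity of $b$ reduces to the pullback attraction identity $\lim_{t\to\infty}u(t,p{\cdot}(-t),z_0)=b(p)$.

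This pullback convergence from below is the main obstacle, and it is where the strict sublinearity is decisive. The plan is to combine three ingredients: (a) the forward convergence of every orbit $u(\,\cdot\,,q,z)$ with $0<z\leq b(q)$ to $b(q{\cdot}\,\cdot\,)$, provided by Theorem~\ref{teor-sublineal lin-dis}(i) applied at every base point $q=p{\cdot}(-t)$, which is legitimate because in situation~(s5) the hypotheses of that theorem hold at all base points; (b) an elementary sublinearity sandwich, $u(t,p{\cdot}(-t),z_0)\geq \lambda_0\, b(p)$ for some $\lambda_0\in(0,1]$ independent of $t$, obtained from (c6) once one notes that $b$ is uniformly bounded above and $z_0\gg 0$, so some $\lambda_0>0$ satisfies $\lambda_0\,b(q)\leq z_0$ for all $q\in P$; and (c) an orbital bootstrap along the pullback trajectory (in the spirit of the proof of Theorem~5.1(b) in~\cite{laos} already invoked inside Theorem~\ref{teor-sublineal lin-dis}), in which the strict inequality in~(c6), together with compactness of $P$ and the uniform strong positivity of $b$, iteratively forces the sandwich constant $\lambda_0$ up to $1$. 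Once $\lim_{t\to\infty}u(t,p{\cdot}(-t),z_0)=b(p)$ is established, combining it with the upper bound yields $b(p_n)\to b(p)$, completing the proof.
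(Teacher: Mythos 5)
Parts (i) and the first assertion of (ii) are correct and coincide with the paper's argument: the measure $\mu$ in (i) is exactly the one produced in Theorem~\ref{teor-principal spectrum}~(s3) via~\eqref{medidas erg}, and in (ii) the hypotheses of Theorem~\ref{teor-sublineal lin-dis} hold at every $p$ by (s5.i). Your strategy for the continuity of $b$ is also the paper's: upper semicontinuity is free (it comes from~\eqref{b(p)}--\eqref{b(p)-dir}, not really from Proposition~\ref{prop-0 o positivo}, but that is a minor point), and lower semicontinuity is reduced to showing that the pullback family $t\mapsto u(t,p{\cdot}(-t),z_0)$, with $z_0\gg 0$ the uniform persistence vector, increases to $b(p)$.

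The gap is in how you close that reduction. Ingredient (a) does not help: forward convergence $u(s,q,z)\to b(q{\cdot}s)$ at each base point $q=p{\cdot}(-t)$ gives no pullback information without uniformity in $q$ of the rate, which is precisely what is unknown. Ingredient (c), the ``orbital bootstrap along the pullback trajectory,'' is where all the work lies and it is not carried out; moreover the mechanism you describe does not transfer directly from the forward-orbit $\lambda(t)$ argument of Theorem~\ref{teor-sublineal lin-dis}. The quantity $\inf\{\lambda\geq 1\mid b(p)\leq\lambda\,u(t,p{\cdot}(-t),z_0)\}$ compares objects anchored at different starting times, so its monotonicity is not the one-line consequence of (c6) that it is for two forward orbits; and the strict inequality in (c6) only bites for $y>r_0$, so any iterative gain requires a \emph{uniform} (over the compact base) visit of the relevant orbits to the region $\{\|y\|\geq r_0\}$ together with a uniform gain estimate, none of which is addressed. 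The paper takes a different and more structured route here: it shows that $s(p):=\lim_n u(nt_0,p{\cdot}(-nt_0),z_0)$ is a lower semicontinuous \emph{equilibrium} (a nondecreasing limit of continuous sub-equilibria, built as in~\cite{nono2}), characterizes $s(p)$ as the minimal element of $A(p)$ whose full orbit stays above $z_0$, invokes the theory of monotone sublinear semiflows over a minimal base~\cite{nuos} to get $s=b$ on every minimal set, and finally transfers the identity to an arbitrary $p$ by taking $t_n\uparrow\infty$ with $p{\cdot}(-t_n)\to p_1$ in a minimal subset of the $\alpha$-limit set, deducing $(1-\varepsilon)\,b(p{\cdot}(-t_n))\leq s(p{\cdot}(-t_n))$ from the two semicontinuities, and pushing forward with $u(t_n,p{\cdot}(-t_n),\,{\cdot}\,)$ using sublinearity and the equilibrium property of both $s$ and $b$. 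You would need either to reproduce this equilibrium/minimal-set argument or to supply the uniform strict-sublinearity gain your sketch presupposes; as written, the continuity claim in (ii) is not established.
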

\begin{proof}
For (i) just see Theorem~\ref{teor-principal spectrum} (s3). As for (ii), if $\alpha_P>0$, Theorem~\ref{teor-principal spectrum} (s5.i) justifies the application of Theorem~\ref{teor-sublineal lin-dis} for any $p\in P$. With this dynamics, if $P$ is minimal, the general theory in N\'{u}\~{n}ez et al.~\cite{nuos} for abstract monotone and sublinear skew-product semiflows on $P\times X^\gamma$ over a minimal base flow $P$ implies that $\{(p,b(p))\mid p\in P\}$ is the only strongly positive minimal set and $b$ is continuous.
\par
It remains to complete the proof if $P$ is not minimal. Recall that  $b$ is known to be upper semicontinuous, because it is the nonincreasing limit of a family of continuous super-equilibria: see~\eqref{b(p)} for Neumann or Robin boundary conditions and~\eqref{b(p)-dir} for Dirichlet boundary conditions. So, it suffices to prove that $b$ can also be obtained as the nondecreasing limit of a family of continuous sub-equilibria.
\par
With this purpose, let $z_0\gg 0$ be the vector given in Theorem~\ref{teor-principal spectrum} (s5.ii) related to the uniform persistence. For $z_0\gg 0$ itself there exists a $t_0>0$ such that  $u(t,p,z_0)\geq z_0$ for any $t\geq  t_0$ and $p\in P$. In particular $u(n t_0,p, z_0)\geq z_0$ for $p\in P$ and $n\geq 1$. This means that the constant map $a_0:P\to X^\gamma$, $p\mapsto a_0(p)= z_0$ is a continuous sub-equilibrium for the discrete semiflow generated by the iteration of $\tau_{t_0}$.
Then, we use the method described in the proof of Theorem~3.6 in Novo et al.~\cite{nono2} to build, starting from $a_0$, a nondecreasing family of continuous sub-equilibria $(a_n)_{n}:P\to X^\gamma$ given by $a_n(p)=u(nt_0,p{\cdot}(-nt_0), z_0)$, $p\in P$ and there exists  $\lim_{n\to\infty}a_n(p)=\sup_{n\geq 1}a_n(p)=:s(p)$. In particular, $z_0\leq s(p)\in A(p)$ for $p\in P$, by the pullback attraction of $\A$. The map $s(p)$ constructed in this way is lower semicontinuous.
\par
We affirm that for any $p\in P$, $s(p)=\min\{z\in A(p)\mid u(t,p,z)\geq z_0 \;\forall t\in \R \}$. Let us first check that $u(t,p,s(p))\geq z_0$ for any $t\in \R$. Write $t_n=nt_0$ for $n\geq 1$. If $t\geq 0$, for any $n\geq 1$,
$z_0\leq u(t+t_n,p{\cdot}(-t_n),z_0)= u(t,p,u(t_n,p{\cdot}(-t_n),z_0))\to u(t,p,s(p))$ by the continuity of $u(t,p,\,{\cdot}\,)$ for $t\geq 0$, so that $z_0\leq u(t,p,s(p))$ for $t\geq 0$. Also for $t<0$, for $n$ big enough $t+t_n\geq t_0$ and $z_0\leq u(t+t_n,p{\cdot}(-t_n),z_0)$. Recall that $(p,s(p))$ has a full orbit and let us see that $u(t+t_n,p{\cdot}(-t_n),z_0)\to u(t,p,s(p))$, so that $z_0\leq  u(t,p,s(p))$. Otherwise, taking a subsequence if necessary, $u(t+t_n,p{\cdot}(-t_n),z_0)\to z_1\not= u(t,p,s(p))$, but moving forwards and applying the semicocycle identity and the continuity of $u(-t,p{\cdot}t,\,{\cdot}\,)$, $u(-t,p{\cdot}t,u(t+t_n,p{\cdot}(-t_n),z_0))=u(t_n,p{\cdot}(-t_n),z_0)\to u(-t,p{\cdot}t,z_1)\not= u(-t,p{\cdot}t,u(t,p,s(p)))=s(p)$, which is absurd. It remains to check that $s(p)\leq z$ for any $z\in A(p)$ such that $u(t,p,z)\geq z_0$ for all $t\in \R$. Since the orbits in $\A$ are full, for each $n\geq 1$ we can take $(p{\cdot}(-t_n),z_n)\in \A$ such that $u(t_n,p{\cdot}(-t_n),z_n)=z$. As $z_0\leq z_n$ for $n\geq 1$, by monotonicity $u(t_n,p{\cdot}(-t_n),z_0)\leq u(t_n,p{\cdot}(-t_n),z_n)=z$ and taking limits $s(p)\leq z$. As a consequence, note that the sequence $(nt_0)_n$ used above can be substituted by any other sequence $(t_n)_n\uparrow\infty$.
\par
Now, it is easy to check that $s$ defines an equilibrium for $\tau$: just take $p\in P$, $t\geq 0$ and a sequence $(t_n)_n\uparrow \infty$ such that there exist $\lim_{n\to\infty} u(t_n-t,p{\cdot}(t-t_n),z_0)=s(p)$ and $\lim_{n\to\infty} u(t_n,p{\cdot}(t-t_n),z_0)=s(p{\cdot}t)$. By the semicocycle property and the continuity of $u(t,p,\,{\cdot}\,)$, $u(t_n,p{\cdot}(t-t_n),z_0)=u(t,p,u(t_n-t,p{\cdot}(t-t_n),z_0))\to u(t,p,s(p))$ as $n\to\infty$ and thus, $s(p{\cdot}t)=u(t,p,s(p))$, as wanted.
\par
It only remains to check that $s(p)=b(p)$ for any $p\in P$. Have in mind that, whenever we reduce attention to a minimal set $M\subset P$, due to Theorem~\ref{teor-sublineal lin-dis}, the general theory in~\cite{nuos} applied to the semiflow $\tau$ restricted to $M\times X^\gamma$ is only compatible with the fact that $s(p)=b(p)$.
\par
So, let us fix a $p\in P$ and consider its $\alpha$-limit set  which is a compact invariant set and thus contains a minimal set $M$ of $P$. Take a $p_1\in M$, recall that $s(p_1)=b(p_1)$ and choose a sequence of real numbers $(t_n)_n\uparrow \infty$ such that $p{\cdot}(-t_n)\to p_1$. By taking a subsequence if necessary, we can assume that $(p{\cdot}(-t_n),s(p{\cdot}(-t_n)))_{n\geq 1}\subset \A$ and  $(p{\cdot}(-t_n),b(p{\cdot}(-t_n)))_{ n\geq 1}\subset \A$ converge, respectively to $(p_1,z_1)$ and $(p_1,z_2)$. Since $s(p{\cdot}(-t_n))\leq b(p{\cdot}(-t_n))$ for $n\geq 1$, with the respective semicontinuity properties of $s$ and $b$, we get that $s(p_1)\leq z_1\leq z_2\leq b(p_1)$, so that they all coincide. Then, fixed an $\varepsilon>0$ as small as wanted, for sufficiently big $n$,
\[
(1-\varepsilon)\,b(p{\cdot}(-t_n))\leq s(p{\cdot}(-t_n))\leq b(p{\cdot}(-t_n))\,.
\]
Taking $u(t_n,p{\cdot}(-t_n),\,{\cdot}\, )$, the sublinearity and monotonicity of the semiflow and the fact that both $s$ and $b$ are equilibria for  $\tau$ lead us to
$(1-\varepsilon)\,b(p)\leq s(p)\leq b(p)$, and since $\varepsilon>0$ is arbitrarily small, we can conclude that $s(p) = b(p)$ and the proof is finished.
\end{proof}
%%%%%%%%%%%%%%%%%%%%%%%%%%%%%%%%%%%%%%%%%%%%%%%%%%%%%%%%%%%%%%%%%%%%%%%%%%%%%%%%%%%%%%%%%%%%%%%%%%%%%%%%%%%%%%%%%%%%%%%%%55
%%%%%%%%%%%%%%%%%%%%%%%%%%%%%%%%%%%%%%%%%%%%%%%%%%%%%%%%%%%%%%%%%%%%%%%%%%%%%%%%%%%%%%%%%%%%%%%%%%%%%%%%%%%%%%%%%%%%%%%5
\section{Purely dissipative problems over a compact base flow}\label{sect-purely dissipative}\noindent
In this section we keep the basic notation and terminology from Section~\ref{sect-the problem} and we consider a family~\eqref{pdefamilynl} of purely dissipative parabolic problems over a compact and connected base flow $P$, that is, we assume that $r_0=0$ in condition (c5) so that the dissipative term $g$ is always active. As in the linear-dissipative case, a first goal is to describe the structure of the attractor taking into account  the topological and ergodic properties of $P$. This is done once more through principal spectral theory. Recall that  $\Sigma_{\rm{pr}}=[\alpha_P,\lambda_P]$ is related only to the linear part of the problems.
\par
When $P$ is minimal and uniquely ergodic, and thus $\Sigma_{\rm{pr}}=\{\lambda_P\}$, the structure of the attractor $\A$ has been described, with Neumann or Robin boundary conditions, in Cardoso et al.~\cite{cardoso} for $\lambda_P<0$ (see Proposition~5) and $\lambda_P>0$ (see Theorem~11), and in Caraballo et al.~\cite{calaobsa} for $\lambda_P=0$ (see Proposition~5.12). When $\lambda_P=0$, the upper boundary map $b(p)$ of $\A$ is either identically null so that $\A=P\times\{0\}$, or else it is strongly positive over a set of null measure and of first category of Baire, that is, a small set in both measure and topology senses.
In particular, when the associated 1-dim linear cocycle $c(t,p)$ satisfies that $\sup_{t\in\R} | \ln c(t,p)|<\infty$ for any $p\in P$, it is proved in~\cite{calaobsa} that $\A=P\times\{0\}$. This makes it clear that the condition $\sup_{t\leq 0} c(t,p)<\infty$ is no longer sufficient to guarantee that $b(p)\gg 0$ in the purely dissipative setting. Stronger conditions are needed in this case and to determine them is the second objective in this section.
\par
Some arguments used in the description of $\A$ in Section~\ref{sect-linear dissipative} also apply in the present setting, so that we get some parts of the following theorem for free.
\begin{teor}\label{teor-purely diss 1}
Assume conditions $(c1)$-$(c5)$ on $g$ with $r_0=0$ in $(c5)$. Then:
\begin{itemize}
\item[(i)] If $\lambda_P < 0$, then $\A=P\times \{0\}$ and it is uniformly exponentially stable.
\item[(ii)] If $\lambda_P = 0$, then $P_0=\{p\in P\mid b(p)=0\}$ is a set of complete measure.
\item[(iii)] If  $\alpha_P\leq 0$, then there exists an ergodic measure $\nu$ such that $b(p)=0$ for almost every $p$ with respect to $\nu$.
\item[(iv)]  If $\alpha_P>0$, then the semiflow $\tau$ is uniformly persistent in the interior of the positive cone, $b(p)\gg 0$ for any $p\in P$ and besides, there exists a $z_0\gg 0$ such that $b(p)\geq z_0$ for any $p\in P$.
\end{itemize}
\end{teor}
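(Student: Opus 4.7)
The plan is to leverage the observation in Remark~\ref{nota-2 purely}(1) that several arguments from Theorem~\ref{teor-principal spectrum} are independent of the linear-dissipative structure and hence transfer verbatim, and to combine these with the necessary condition on the 1-dim cocycle in Remark~\ref{nota-1 purely} together with the recurrence argument of Remark~\ref{nota-2 purely}(3).

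For (i), I would simply invoke the proof of (s1) in Theorem~\ref{teor-principal spectrum}, which only uses that $\lambda_P<0$ implies the linear semiflow $\tau_L$ is uniformly exponentially stable (Proposition~5 in~\cite{cardoso}); since any mild solution of the full problem is dominated, via conditions (c1)--(c3), by the linear flow plus a negative feedback, the pullback formula~\eqref{b(p)} forces $b\equiv 0$.

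For (iv), the proof of (s5.ii) in Theorem~\ref{teor-principal spectrum} carries over unchanged and yields the uniform persistence: there exist $z_0\gg 0$ and, for every $z\gg 0$, a time $t^*(z)$ (uniform in $p$) such that $u(t,p,z)\geq z_0$ whenever $t\geq t^*(z)$. Applying this with $z=re_0$ for the same $r>0$ used in~\eqref{b(p)} (or~\eqref{b(p)-dir}), one has $u(t,p{\cdot}(-t),re_0)\geq z_0$ for all $p\in P$ as soon as $t\geq t^*(re_0)$; passing to the (decreasing) limit as $t\to\infty$ gives $b(p)\geq z_0\gg 0$ for every $p\in P$.

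The heart of the proof, and its main difficulty, lies in (ii) and (iii), since these are the statements where the purely dissipative character genuinely differs from the linear-dissipative one: the sufficient condition $\sup_{t\leq 0}c(t,p)<\infty$ for $b(p)\gg 0$ in Proposition~\ref{prop-equivalentes} is no longer available, only its necessity (Remark~\ref{nota-1 purely}). For (iii) with $\alpha_P<0$, I reuse the argument from Theorem~\ref{teor-principal spectrum}(s2): by~\eqref{medidas erg} there is an ergodic measure $\nu$ with $\lim_{t\to -\infty}\ln c(t,p)/t=\alpha_P<0$ $\nu$-a.e., whence $c(t,p)\to\infty$ as $t\to -\infty$ so that $\sup_{t\leq 0}c(t,p)=\infty$; by Remark~\ref{nota-1 purely} and Proposition~\ref{prop-0 o positivo} this gives $b(p)=0$ $\nu$-a.e. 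For $\alpha_P=0$ the cocycle is only recurrent in the Shneiberg sense, and here I switch to the argument in Remark~\ref{nota-2 purely}(3): by~\eqref{medidas erg} there is an ergodic measure $\nu$ with $\lim_{t\to\pm\infty}\ln c(t,p)/t=0$ $\nu$-a.e., and $\nu$-a.e.\ point lies in the set $P_{\rm r}$ of points recurrent at $\pm\infty$; for any such $p$, if $b(p)\gg 0$ then the orbit $b(p{\cdot}t)$ would have to remain in the linear zone $\{|y|\leq r_0\}=\{0\}$, which contradicts $b(p)\gg 0$. Hence $b(p)=0$ $\nu$-a.e., establishing (iii).

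Finally, (ii) is obtained by combining the two cases above across all ergodic measures. Given any ergodic measure $\mu$, the Birkhoff/subadditive theorem applied to the additive cocycle $\ln c(t,p)$ produces a $\mu$-a.e.\ constant Lyapunov exponent $\lambda(\mu)\in[\alpha_P,\lambda_P]\subseteq(-\infty,0]$; if $\lambda(\mu)<0$ the divergence argument of the $\alpha_P<0$ case yields $b=0$ $\mu$-a.e., and if $\lambda(\mu)=0$ the recurrence-plus-$r_0=0$ argument again yields $b=0$ $\mu$-a.e. Thus $\mu(P_0)=1$ for every ergodic $\mu$, which is the definition of complete measure. The key conceptual point I expect to have to be careful about is precisely this case split: the condition $\lambda_P=0$ does not prevent an ergodic measure from having strictly negative exponent, so both sub-arguments are genuinely needed, and the recurrence argument of Remark~\ref{nota-2 purely}(3) must be invoked in a way that uses $r_0=0$ in an essential way to collapse the linear zone to $\{0\}$.
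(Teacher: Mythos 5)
Your treatment of (i), of (iv), and of the $\alpha_P<0$ branch of (iii) is correct and essentially the paper's: (i) and the uniform persistence in (iv) are imported from Theorem~\ref{teor-principal spectrum} exactly as in Remark~\ref{nota-2 purely}.1, and the $\alpha_P<0$ case follows from \eqref{medidas erg} plus Remark~\ref{nota-1 purely}. Your derivation of $b(p)\geq z_0$ in (iv), by feeding $z=re_0$ into the uniform persistence estimate and passing to the decreasing limit in \eqref{b(p)}, is a slightly more direct route than the paper's construction of a nondecreasing family of sub-equilibria $a_n(p)=u(nt_0,p{\cdot}(-nt_0),z_0)$, but the two are equivalent. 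The genuine gap is in the null-exponent branch of (ii) and (iii). There you invoke the conclusion of Remark~\ref{nota-2 purely}.3 --- that for $p\in P_{\rm r}$ with $b(p)\gg 0$ the orbit $b(p{\cdot}t)$ must remain in the linear zone $\{|y|\leq r_0\}$ --- and then set $r_0=0$ to get an immediate contradiction. But that statement is established (via Theorem~4.17(a) of \cite{laos}) only in the linear-dissipative case, and its proof genuinely needs $r_0>0$: the mechanism is that each excursion of the orbit outside the zone $\|y\|\leq r_0$ costs a loss factor $\lambda<1$ relative to the linear dynamics, uniformly over the compact set of possible excursion values (which is bounded away from the dead zone), and recurrence of the cocycle then forbids infinitely many such losses. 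When $r_0=0$ the ``outside region'' touches $0$, where $g$ is $o(y)$ by (c1), the loss factor degenerates to $1$, and no contradiction results. Indeed Theorem~\ref{teor-atractor no trivial} and Example~\ref{eje p_0}(i) show that with $r_0=0$ and $\Sigma_{\rm pr}=\{0\}$ one can have $b(p_0)\gg 0$ at suitable (non-recurrent) points, so any argument on $P_{\rm r}$ must exploit recurrence quantitatively rather than just quoting the linear-dissipative conclusion with $r_0$ formally set to $0$.

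The paper closes this gap by approximation: it constructs a sequence of linear-dissipative nonlinearities $g_n\geq g$ sharing the linear part, with dead zones of radii $r_n\downarrow 0$ and $g\leq g_{n+1}\leq g_n$ on the positive cone, so that the corresponding upper boundary maps satisfy $0\leq b\leq b_{n+1}\leq b_n$ by comparison and \eqref{b(p)}. For each fixed $n$ the genuine linear-dissipative recurrence argument applies and yields, for every $p\in P_{\rm r}$, either $b_n(p)=0$ or $0\ll b_n(p)\leq r_n$; letting $n\to\infty$ gives $b(p)=0$ on $P_{\rm r}$ and hence $\nu(P_{0})=1$. Your case split over ergodic measures in (ii) (strictly negative exponent versus null exponent) is exactly the paper's, so once the null-exponent step is repaired in this way the remainder of your argument goes through.
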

\begin{proof}
(i) Just see the proof of Theorem~\ref{teor-principal spectrum}~(s1).
\par
(ii) Take any ergodic measure $\nu$ and let us see that $\nu(P_0)=1$. For the 1-dim linear cocycle $c(t,p)$, let $\lambda$ be the associated Lyapunov exponent for $\nu$, that is, $\lim_{t\to\infty} \ln c(t,p)/t=\lim_{t\to-\infty} \ln c(t,p)/t=\lambda$ for almost every $p$ with respect to $\nu$. By Theorem~2.3 in Johnson et al.~\cite{jops},  $\lambda\in \Sigma_{\text{pr}}=[\alpha_P,0]$.  Then, if $\lambda=0$,
arguing as in Remark~\ref{nota-2 purely}.3, let $P_{\rm{r}}$ be the associated set of recurrent points, which  is invariant and  $\nu(P_{\rm{r}})=1$.  We now fix a sequence $(r_n)_n\downarrow 0$ as $n\to\infty$. Under conditions (c1)-(c5) with $r_0=0$  in (c5) for $g$, it is not difficult to  build a sequence of linear-dissipative families sharing the linear part of the purely dissipative family~\eqref{pdefamilynl},
\begin{equation*}%\label{chafee-infante-2}
\left\{\begin{array}{l} \des\frac{\partial y}{\partial t}  =
 \Delta \, y+h(p{\cdot}t,x)\,y+ g_n(y)\,,\quad t>0\,,\;\,x\in U, \;\,\text{for each}\;p\in P, \\[.2cm]
By=0\,,\quad  t>0\,,\;\,x\in \partial U,
\end{array}\right.
\end{equation*}
such that $g_n:\R\to\R$ satisfies conditions $(c1)$-$(c5)$  with $r_n>0$ in $(c5)$, and $g(p,x,y)\leq g_{n+1}(y)\leq g_{n}(y)$ for $p\in P$, $x\in \bar U$, $y\geq 0$ and $n\geq 1$.  Denoting by $b_n(p)$ the corresponding upper boundary map of the attractor for each $n\geq 1$, by a standard argument of comparison of solutions and formula~\eqref{b(p)} we have that $0\leq b(p)\leq b_{n+1}(p)\leq b_{n}(p)$ for  $p\in P$ and $n\geq 1$. Now, take a $p\in P_{\rm{r}}$. If for some $n\geq 1$, $b_n(p)=0$, also $b(p)=0$; else $b_n(p)\gg 0$ for $n\geq 1$ and then we can argue as in the proof of Theorem~4.17~(a) in~\cite{laos} to get that $0\ll b_n(p)\leq r_n$, meaning that $b_n(p)(x)\leq r_n$ for any $x\in\bar U$. Then, taking limits as $n\to\infty$, it must be $b(p)=0$ too. Summing up, $b(p)=0$ for every $p\in P_{\rm{r}}$, that is, $P_{\rm{r}}\subset P_0$. Since  $\nu(P_{\rm{r}})=1$, also $\nu(P_0)=1$, as we wanted to see.
\par
It remains to consider the possibility that $\lambda<0$ provided that $\alpha_P<0$ as well. But this is the easy case, since then $\lim_{t\to -\infty} \ln c(t,p)/t=\lambda<0$ for almost every $p$ with respect to $\nu$, so that $\lim_{t\to -\infty} c(t,p)=\infty$ for all such $p$ and by Remark \ref{nota-1 purely} $b(p) = 0$. Once more, $\nu(P_0)=1$. Therefore, $P_0$ is a set of complete measure.
\par
(iii) Let  $\nu$ be an ergodic measure such that relation~\eqref{medidas erg} holds for $\alpha_P$. Then, we just argue as in the previous item for $\alpha_P=0$ or $\alpha_P<0$.
\par
(iv) Also the proof of Theorem~\ref{teor-principal spectrum}~(s5.ii) is independent of whether the problems are linear-dissipative or purely dissipative. Finally, argue as in the third  paragraph in the proof of Theorem~\ref{teor-b continua} to conclude that  $0\ll z_0\leq b(p)$ for any $p\in P$, for the vector $z_0\gg 0$ linked to the uniform persistence. The proof is complete.
\end{proof}
Before we proceed, we show in the next result that also in the purely dissipative context we can say more when the sublinear condition (c6) is assumed.
\begin{teor}\label{teor-sublineal purely-dis}
Assume conditions $(c1)$-$(c6)$ on $g$, with $r_0=0$ in $(c5)$ and $(c6)$.
\begin{itemize}
\item[(i)] If  $p_0\in P$ is such that $b(p_0)\gg 0$ and $\lim_{t\to\infty} \|b(p_0{\cdot}t)\|=0$, then also $\lim_{t\to \infty} u(t,p_0,z)=0$ for any $z\in X$.
\item[(ii)] If  $p_0\in P$ is such that $b(p_0)\gg 0$ and $\limsup_{t\to\infty} \|b(p_0{\cdot}t)\|>0$, then:
\begin{itemize}
\item[(ii.1)] For any $0< z\leq b(p_0)$ it holds that $\limsup_{t\to \infty} \|u(t,p_0,z)\|>0$ and $\lim_{t\to\infty} b(p_0{\cdot}t)-u(t,p_0,z)=0$.
\item[(ii.2)] If $z\gg 0$ is such that $b(p_0)\leq z$, then $\lim_{t\to\infty} u(t,p_0,z)-b(p_0{\cdot}t)=0$.
\end{itemize}
\item[(iii)] If $\alpha_P>0$, then the dynamical description in {\rm (ii)} applies to any $p_0\in P$, and $b:P\to X^\gamma$ is continuous.
\end{itemize}
\end{teor}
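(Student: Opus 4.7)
The plan is to obtain the three assertions by the same sublinear-flow techniques used in the linear-dissipative proofs of Theorem \ref{teor-sublineal lin-dis} and Theorem \ref{teor-b continua}, adapted to $r_0=0$. The essential ingredients are: strong monotonicity of $\tau$; the sublinearity $u(t,p,\lambda z)\le \lambda\,u(t,p,z)$ for $\lambda\ge 1$, $z\ge 0$, $t\ge 0$ (which, as noted in the excerpt, is a standard consequence of (c6)); its equivalent form $u(t,p,\mu z)\ge \mu\,u(t,p,z)$ for $\mu\in(0,1]$; the odd-symmetry identity $u(t,p,-z)=-u(t,p,z)$ coming from (c4); and the compactness of the positive semiorbit for $t>0$.

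For part (i) I would proceed by sandwiching. Since $b(p_0)\gg 0$ we can pick $\lambda\ge 1$ so large that $-\lambda\,b(p_0)\le z\le \lambda\,b(p_0)$. Monotonicity gives $u(t,p_0,-\lambda b(p_0))\le u(t,p_0,z)\le u(t,p_0,\lambda b(p_0))$, and sublinearity plus oddness yield $u(t,p_0,\pm\lambda b(p_0))=\pm u(t,p_0,\lambda b(p_0))\le \pm\lambda\,b(p_0{\cdot}t)$ in the natural componentwise sense. Taking the sup-norm, $\|u(t,p_0,z)\|\le \lambda\,\|b(p_0{\cdot}t)\|\to 0$.

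For part (ii) I split the two claims. By strong monotonicity applied at a small $t_1>0$ we may replace $z$ with $u(t_1,p_0,z)\gg 0$ and hence assume $0\ll z\le b(p_0)$. Then we can fix $\mu_0\in(0,1]$ with $z\ge \mu_0\,b(p_0)$. Monotonicity together with sublinearity (used with $\mu_0\le 1$) gives $u(t,p_0,z)\ge \mu_0\,b(p_0{\cdot}t)$ for all $t\ge 0$, so $\|u(t,p_0,z)\|\ge \mu_0\,\|b(p_0{\cdot}t)\|$ and (ii.1) first claim follows from $\limsup\|b(p_0{\cdot}t)\|>0$. For the convergence assertions, I would set
\[
\lambda(t)=\sup\{\mu\in[0,1]\colon u(t,p_0,z)\ge \mu\,b(p_0{\cdot}t)\},
\]
which is well-defined and lies in $[\mu_0,1]$. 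Sublinearity shows $\lambda(t)$ is nondecreasing, hence $\lambda(t)\uparrow\lambda_\infty\le 1$. The main obstacle, and the real work, is to rule out $\lambda_\infty<1$: here one uses the strict form of (c6) to upgrade sublinearity to strong sublinearity of $\tau$ in the interior of the cone, and combines this with the relative compactness of $\{(p_0{\cdot}t,u(t,p_0,z))\}_{t\ge 1}$ and of $\{(p_0{\cdot}t,b(p_0{\cdot}t))\}_{t\in\R}$ inside $\A$ to produce a uniform-in-$t$ gain $\lambda(t+t_0)\ge \lambda_\infty+\delta$, a contradiction. This is precisely the argument pointed to in the excerpt via the proof of Theorem~5.1(b) in \cite{laos}, and it transfers verbatim here because (c6) is assumed with $r_0=0$. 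Once $\lambda_\infty=1$ is established, $b(p_0{\cdot}t)-u(t,p_0,z)\to 0$ follows from the boundedness of $b(p_0{\cdot}t)$. Item (ii.2) is handled symmetrically: if $z\ge b(p_0)$ with $z\gg 0$, choose $\Lambda\ge 1$ with $z\le \Lambda\,b(p_0)$, observe $b(p_0{\cdot}t)\le u(t,p_0,z)\le u(t,p_0,\Lambda\,b(p_0))\le \Lambda\,b(p_0{\cdot}t)$, and run the nonincreasing analogue of $\lambda(t)$ to obtain the same convergence.

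For part (iii), if $\alpha_P>0$ Theorem \ref{teor-purely diss 1}(iv) supplies $z_0\gg 0$ with $b(p)\ge z_0$ for every $p\in P$. In particular $\liminf_{t\to\infty}\|b(p_0{\cdot}t)\|\ge\|z_0\|>0$ for every $p_0\in P$, so hypothesis (ii) is satisfied at every base point. The continuity of $b$ is then obtained by the same scheme as in Theorem \ref{teor-b continua}(ii): $b$ is already upper semicontinuous via the decreasing limit formulas \eqref{b(p)} and \eqref{b(p)-dir}, and the uniform persistence vector $z_0$ lets us set $a_0\equiv z_0$ as a continuous constant sub-equilibrium for a discrete semiflow $\tau_{t_0}$, iterate $a_n(p)=u(nt_0,p{\cdot}(-nt_0),z_0)$ to obtain a nondecreasing family of continuous sub-equilibria with pointwise limit $s(p)\in A(p)$, argue that $s$ is an equilibrium and that $s(p)=b(p)$ on each minimal subset (using the general monotone-sublinear theory of \cite{nuos} as in the excerpt), and then upgrade to equality on all of $P$ by taking $\alpha$-limits along a minimal set, passing to limits in a squeeze $(1-\varepsilon)b(p{\cdot}(-t_n))\le s(p{\cdot}(-t_n))\le b(p{\cdot}(-t_n))$ under $u(t_n,p{\cdot}(-t_n),\cdot)$ and invoking sublinearity. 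I expect this last transfer-from-minimal-set step and the $\lambda_\infty=1$ step in (ii) to be the only genuinely delicate points; all other arguments are linear-dissipative carbon copies.
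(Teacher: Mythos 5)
Your proposal is correct and follows essentially the same route as the paper: the sandwich via oddness and sublinearity for (i), the monotone ratio function $\lambda(t)$ (yours is just the reciprocal normalization of the paper's $\inf\{\lambda\ge 1\mid\cdots\}$) combined with strict sublinearity, compactness of semiorbits and the Langa--Obaya--Sanz Theorem~5.1(b) contradiction scheme for (ii), and Theorem~\ref{teor-purely diss 1}(iv) plus the sub-equilibrium construction of Theorem~\ref{teor-b continua}(ii) for (iii). The only cosmetic difference is that the paper makes the contradiction step explicit through the auxiliary family with nonlinearity $\lambda_0\,g(p{\cdot}t,x,y/\lambda_0)$, whereas you invoke the strong-sublinearity gain abstractly, but the mechanism is the same.
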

\begin{proof}
(i) Write $-|z|\leq z\leq |z|$ for the map $|z|(x)=|z(x)|$, $x\in\bar U$ and take a $\lambda>1$ such that $|z|\leq \lambda\, b(p_0)$. Then, for $t>0$, by monotonicity and sublinearity of the semiflow, $0\leq u(t,p_0,|z|)\leq u(t,p_0,\lambda\, b(p_0))\leq \lambda\, u(t,p_0, b(p_0))=\lambda\, b(p_0{\cdot}t)\to 0$ as $t\to \infty$. Since we can deduce from condition (c4) that $u(t,p_0,-|z|)=-u(t,p_0,|z|)$, then for $t\geq 0$,   $\|u(t,p_0,z)\|\leq \|u(t,p_0,|z|)\|\to 0$ as $t\to\infty$, and we are done.
\par
(ii) The main ideas  come from those in the proof of Theorem~5.1~(b) in Langa et al.~\cite{laos}, but still there are some differences due to the fact that now $r_0=0$. For the sake of completeness, we give a sketch of the proof.
\par
(ii.1) Let $0< z<b(p_0)$. First of all, by the strong monotonicity we can assume without loss of generality that $z\gg 0$. Then, take a $\lambda>1$ such that $b(p_0)\leq \lambda\, z$. This time, as before, $b(p_0{\cdot}t)\leq \lambda\, u(t,p_0,z)$ for $t\geq 0$ and we get that $l:=\limsup_{t\to \infty} \|u(t,p_0,z)\|>0$. The map
\begin{equation}\label{lambda(t)}
\lambda(t)=\inf\{\lambda\geq 1\mid b(p_0{\cdot}t)\leq \lambda\, u(t,p_0,z)\},\quad t\geq 0
\end{equation}
is nonincreasing and $\lambda(0)>1$. Take $\lambda_0=\lim_{t\to \infty}\lambda(t)\geq 1$. If $\lambda_0=1$ we are done. Argue by contradiction and assume that $\lambda_0>1$. Then, take a sequence $(t_n)_n\uparrow \infty$ such that $\|u(t_n,p_0,z)\|\geq l/2$ for $n\geq 1$. Recalling that semiorbits are bounded and relatively compact, without loss of generality we can assume that $(p_0{\cdot}t_n,u(t_n,p_0,z))\to (p_1,z_1)$ with $\|z_1\|\geq l/2>0$. At this point, we consider the auxiliary family of parabolic PDEs given for  each $p\in P$ by
\begin{equation*}%\label{pde-auxiliar}
\left\{\begin{array}{l} \des\frac{\partial y}{\partial t}  =
 \Delta \, y+h(p{\cdot}t,x)\,y+\lambda_0\,g\Big(p{\cdot}t,x,\frac{y}{\lambda_0}\Big),\quad t>0\,,\;\,x\in U,
  \\[.2cm]
By:=\bar\alpha(x)\,y+\kappa\,\des\frac{\partial y}{\partial n} =0\,,\quad  t>0\,,\;\,x\in \partial U,\,
\end{array}\right.
\end{equation*}
and denote the mild solutions of the associated ACPs by $v(t,p,\wit z)$. With condition (c6) on $g$, by comparison we obtain that $u(t,p,\wit z)\leq v(t,p,\wit z)$ for $p\in P$, $\wit z\geq 0$ and $t\geq 0$, and it is easy to check that $v(t,p, \lambda_0\,\wit z)=\lambda_0\, u(t,p,\wit z)$ for $p\in P$, $\wit z\geq 0$, $t\geq 0$. Then, with the strict sublinearity condition (c6), for the initial condition $\lambda_0\,z_1>0$ we can assert that $u(\varepsilon_1,p_1,\lambda_0\, z_1)\ll v(\varepsilon_1,p_1,\lambda_0\, z_1)=\lambda_0\, u(\varepsilon_1,p_1,z_1)$ for some $\varepsilon_1>0$. By continuity, we can take constants $1<\lambda_1<\lambda_0<\lambda_2$ and an $n_0>0$ so that  $u(\varepsilon_1,p_0{\cdot}t_n,\lambda_2\, u(t_n,p_0,z))\ll \lambda_1 u(\varepsilon_1,p_0{\cdot}t_n, u(t_n,p_0,z))=\lambda_1 u(t_n+\varepsilon_1,p_0,z)$ for $n\geq n_0$. Finally, since $\lambda_0=\lim_{n\to\infty}\lambda(t_n)$ and $\lambda_0<\lambda_2$, there is an $n_1\geq n_0$ such that $b(p_0{\cdot}t_n)\leq \lambda_2\, u(t_n,p_0,z)$ for $n\geq n_1$, by~\eqref{lambda(t)}. But then,
$b(p_0{\cdot}(t_n+\varepsilon_1))\leq u(\varepsilon_1,p_0{\cdot}t_n,\lambda_2\, u(t_n,p_0,z))\ll \lambda_1 u(t_n+\varepsilon_1,p_0,z)$ for $n\geq n_1$, which means that $\lambda(t_n+\varepsilon_1)\leq \lambda_1$ for $n\geq n_1$, but this is absurd since $\lambda_1<\lambda_0$.
\par
(ii.2) Let $z\geq  b(p_0)$. Since the arguments are essentially the same as before, we just note that the appropriate map to be considered is
\begin{equation*}
\lambda(t)=\inf\{\lambda\geq 1\mid u(t,p_0,z) \leq \lambda\, b(p_0{\cdot}t) \},\quad t\geq 0
\end{equation*}
and that this time we take a sequence $(t_n)_n\uparrow \infty$ such that $\|b(p_0{\cdot}t_n)\|\geq l/2$, $n\geq 1$, for $l:=\limsup_{t\to\infty} \|b(p_0{\cdot}t)\|>0$ and $(p_0{\cdot}t_n,b(p_0{\cdot}t_n))\to (p_1,z_1)$ with $z_1>0$.
 \par
(iii) The fact that for any  $p_0\in P$,  $b(p_0)\gg 0$ and $\limsup_{t\to\infty} \|b(p_0{\cdot}t)\|>0$ follows from Theorem~\ref{teor-purely diss 1}~(iv). As a consequence (ii) applies, which is the counterpart of Theorem~\ref{teor-sublineal lin-dis} in this purely dissipative setting and then the same proof as that of Theorem~\ref{teor-b continua}~(ii) permits to assert the continuity of $b$. The proof is finished.
\end{proof}
Since the description is complete if $\lambda_P<0$ or $\alpha_P>0$, hereon we assume that $\alpha_P\leq 0\leq \lambda_P$.
We first concentrate on the search for conditions to delimit when $b(p)\gg 0$ for a fixed $p\in P$.
Example~5.13 in~\cite{calaobsa} of a nontrivial attractor in the purely dissipative framework, with $\Sigma_{\rm{pr}}=\{0\}$, is given for a spatially homogeneous parabolic problem. The construction is based on an ODEs example by Johnson et al.~\cite{joklpa} (p.~79--80), which is again going to be exploited in the following lemma for  ODEs. This lemma is a preliminary result for the case of parabolic PDEs.
\par
Recall that for a single linear equation, the exponential dichotomy corresponds to the usual concept of dichotomy (see~\cite{copp} and~\cite{sase}).
%%%%%%%%%%%%%%%%%%%%%%%%%%%%%%%%%%%%%%%%%%%%%%%%%%%%%%%%%%%%%%%%%%%%%%%%%%%%%%%%%%%%%%%%%%%%%%%%%%%%%%%%%%%%%%%%%%%%%%%%
\begin{lema}\label{lema-odes}
Let us consider the scalar non-autonomous purely dissipative problem on the half plane $w\geq 0$ given by the ODE
\begin{equation}\label{ode}
w'=\frac{1}{\theta-1}\,a_0(t)\,w-\frac{1}{\theta-1}\,w^\theta,
\end{equation}
with $\theta>1$ and $a_0:\R\to\R$ is a bounded and uniformly continuous map.
\begin{itemize}
\item[(i)]
If
\begin{equation}\label{integrable}
e^{\int_0^t \! a_0} \in L^1((-\infty,0])\,,
\end{equation}
then equation \eqref{ode} has an entire positive and bounded solution $w_0(t)$, $t\in \R$.
\item[(ii)] If the continuous spectrum of the linear ODE $v'=a_0(t)\,v$ has the form $[\lambda_1,\lambda_2]$ with $0\leq \lambda_1\leq \lambda_2$  and equation~\eqref{ode} has an entire positive and bounded solution $w_0(t)$, then~\eqref{integrable} holds.
\end{itemize}
\end{lema}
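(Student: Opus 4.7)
Both parts will be handled via the pointwise Bernoulli substitution $v = w^{1-\theta}$ (with inverse $w = v^{-1/(\theta-1)}$), under which a positive solution of~\eqref{ode} corresponds exactly to a positive solution of the linear non-homogeneous ODE
\begin{equation*}
v' + a_0(t)\,v = 1,
\end{equation*}
as a direct differentiation shows. The integrating factor $e^{\int_0^t a_0}$ recasts this linear equation as $(v\,e^{\int_0^t a_0})' = e^{\int_0^t a_0}$, and I will exploit this form in both directions.

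For part~(i), my candidate is the natural pullback solution
\begin{equation*}
v_0(t) := \int_{-\infty}^{t} e^{-\int_{s}^{t} a_0(\sigma)\,d\sigma}\,ds \;=\; e^{-\int_0^t a_0}\int_{-\infty}^{t} e^{\int_0^{s} a_0}\,ds.
\end{equation*}
The hypothesis~\eqref{integrable}, combined with continuity of $a_0$ on bounded intervals, makes $v_0(t)$ finite for every $t\in\R$ (split the inner integral at $0$), and a direct computation confirms $v_0' + a_0 v_0 = 1$. Clearly $v_0>0$, and the trivial estimate $\int_{t-1}^t e^{-\int_s^t a_0}\,ds \geq e^{-A}$, where $A:=\sup_\R |a_0|<\infty$, gives $v_0(t)\geq e^{-A}$ uniformly in $t\in\R$. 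Consequently $w_0 := v_0^{-1/(\theta-1)}$ is entire, strictly positive, and bounded above by $e^{A/(\theta-1)}$, as required.

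For part~(ii), set $v_0 := w_0^{1-\theta}$; this function is entire, strictly positive, bounded below by $\|w_0\|_\infty^{-(\theta-1)}>0$, and satisfies $v_0' + a_0 v_0 = 1$. Integrating $(v_0 \,e^{\int_0^t a_0})' = e^{\int_0^t a_0}$ from $t<0$ to $0$ gives
\begin{equation*}
\int_t^{0} e^{\int_0^{s} a_0}\,ds \;=\; v_0(0) - v_0(t)\,e^{\int_0^{t} a_0} \;\leq\; v_0(0),
\end{equation*}
the last inequality using only positivity of $v_0$ and of the exponential. As $t\to -\infty$ the left-hand side is monotone increasing (positive integrand) and bounded above, hence convergent, which is precisely~\eqref{integrable}. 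I expect the spectral hypothesis $\lambda_1\geq 0$ to enter only to control $v_0(t)\,e^{\int_0^t a_0}$ as $t\to-\infty$, sharpening the above inequality to the identity $\int_{-\infty}^0 e^{\int_0^s a_0}\,ds = v_0(0)$. I anticipate no serious technical obstacle; the only points requiring care are checking that the pullback integral in~(i) is finite at every $t$ (not just $t=0$) and selecting the correct limits when applying the integrating-factor formula in~(ii).
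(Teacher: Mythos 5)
Your proof of part (i) is essentially the paper's: the same Bernoulli substitution $v=w^{1-\theta}$, the same pullback formula $v_0(t)=\int_{-\infty}^t e^{-\int_s^t a_0}\,ds$, and a uniform positive lower bound for $v_0$ drawn from the boundedness of $a_0$ (your $v_0(t)\ge e^{-A}$ over a unit window and the paper's $v_0(t)\ge 1/C$ are interchangeable). Part (ii) is where you genuinely diverge, and your route is both correct and markedly simpler. The paper handles $\lambda_1>0$ by exponential dichotomy and, for $\lambda_1=0$, runs an approximation scheme: it builds the hull of $a_0$, perturbs the coefficient to $\varepsilon_n+a(p{\cdot}t)$ so that the perturbed principal spectrum becomes strictly positive, compares the resulting upper boundary maps $b_n$ of the perturbed attractors with that of the original problem, and passes to the limit with Lebesgue's monotone convergence theorem applied to $v_n(0)=\int_{-\infty}^0 e^{\varepsilon_n t+\int_0^t a_0}\,dt$. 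You instead integrate $(v_0\,e^{\int_0^t a_0})'=e^{\int_0^t a_0}$ over $[t,0]$ and use nothing but the positivity of $v_0$ and of the exponential to bound the monotone quantity $\int_t^0 e^{\int_0^s a_0}\,ds$ by $v_0(0)$, forcing convergence as $t\to-\infty$. Note that, contrary to the expectation in your closing sentence, the spectral hypothesis $0\le\lambda_1$ never enters: the one-sided inequality suffices, so your argument establishes (ii) for an arbitrary bounded and uniformly continuous $a_0$, and indeed uses only the positivity of $w_0$ on $(-\infty,0]$, not its boundedness. What the paper's longer route buys is that it exercises the hull/persistence/comparison machinery that reappears in the proof of Theorem~\ref{teor-atractor no trivial}; what yours buys is a two-line elementary proof of a strictly more general statement. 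The only points to make explicit in a final write-up are the ones you already flagged: finiteness of the pullback integral at every $t$ (split the inner integral at $0$ and use continuity of $a_0$ on compact intervals) and the verification that $v_0$ is differentiable with $v_0'=-a_0\,v_0+1$, which follows from the fundamental theorem of calculus once $v_0$ is written in the factored form $e^{-\int_0^t a_0}\int_{-\infty}^t e^{\int_0^s a_0}\,ds$.
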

\begin{proof}
(i) Note that the null map is a solution of equation~\eqref{ode}. For $w>0$ let us make the change of variables $v=w^{1-\theta}$ in~\eqref{ode} in order to obtain the scalar nonhomogeneous linear equation
\begin{equation}\label{v+1}
v'= -a_0(t)\,v+1\,.
\end{equation}
Under condition \eqref{integrable}, it is not difficult to check that the formula
\begin{equation}\label{v}
v(t)=\int_{-\infty}^t e^{-\int_s^t \! a_0} \,ds\,,\quad t\in \R
\end{equation}
defines a positive solution of the previous problem so that, undoing the change of variables, $w_0=v^{\frac{1}{1-\theta}}$, $t\in\R$ is a positive entire solution of equation~\eqref{ode}. Finally, note that $a_0(t)$ is  bounded, and in particular there is a $C>0$ such that $a_0(t)\leq C$ for any $t\in\R$. Then, for any $t\in\R$ and any $s\leq t$, $\int_{s}^t a_0\leq C(t-s)$. From here it easily follows  that $\frac{1}{C}\leq v(t)$ for $t\in\R$, so that the map $w_0(t)$ is bounded on $\R$.
\par
(ii) Note that if $\lambda_1>0$, the continuous spectrum of the linear ODE $v'= -a_0(t)\,v$ is $[-\lambda_2,-\lambda_1]$ and $0$ lies in the resolvent set, that is, this equation has an exponential dichotomy with trivial projectors (more precisely, with full stable subspace and null unstable subspace)  and~\eqref{v} gives the expression of the only bounded solution of~\eqref{v+1} (see Coppel~\cite{copp}). In particular, for $t=0$ we get condition~\eqref{integrable}. In fact, the interesting case is $\lambda_1=0$, but  $\lambda_1>0$ has been included for future convenience.
\par
So, assume that $\lambda_1=0$. We use an approximation method for the proof. Let $P$ be the hull of $a_0(t)$ with the usual translation flow $(t,p)\in\R\times P\mapsto p{\cdot}t\in P$. $P$ is compact and connected and the map $a:P\to \R$, $p\mapsto a(p)=p(0)$ is continuous. Take a sequence $(\varepsilon_n)_{n}\downarrow 0$ and for each $n\geq 1$ consider the family of ODEs
\begin{equation}\label{perturbed}
w'=\frac{1}{\theta -1}\,(\varepsilon_n+a(p{\cdot}t))\,w-\frac{1}{\theta -1}\,w^\theta\,,\quad p\in P\,,
\end{equation}
whose solutions generate a skew-product semiflow $\tau_n$ on $P\times \R_+$. Note that the structure of this family falls in what we have called the purely dissipative context and the dissipative term is strictly sublinear for $w>0$. To determine the continuous spectrum of these problems, we look at the linear part
\begin{equation}\label{perturbed lineal}
w'=\frac{1}{\theta -1}\,(\varepsilon_n+a(p{\cdot}t))\,w\,,\quad p\in P\,.
\end{equation}
If $[0,\lambda_2]$ with $0\leq \lambda_2$ is the continuous spectrum of $v'=a_0(t)\,v$, then $[\frac{\varepsilon_n}{\theta-1},\frac{\lambda_2+\varepsilon_n}{\theta-1}]$ is the continuous spectrum for the whole family~\eqref{perturbed lineal}, since the spectrum is preserved along the hull $P$ of $a_0(t)$ and  $\theta>1$.
Here note that the conclusions of Theorems~\ref{teor-purely diss 1} and \ref{teor-sublineal purely-dis} apply to purely dissipative families of ODEs, such as~\eqref{perturbed}. Since $\frac{\varepsilon_n}{\theta-1}>0$,  for each $n\geq 1$ there is a continuous real map $b_n(p)$ which gives the upper boundary map of the attractor for $\tau_n$, such that for any $p\in P$, $b_n(p{\cdot}t)$, $t\in\R$ is a strictly positive and bounded, both above and away from $0$, entire solution  of problem~\eqref{perturbed} for $p$ and $n$, and any other positive solution of~\eqref{perturbed} tends to it as $t\to \infty$.
\par
Besides, the hypothesis that there exists an entire positive and bounded solution $w_0(t)$ of~\eqref{ode} implies that the  upper boundary map $b(p)$ of the attractor for the extended family from~\eqref{ode} to the hull $P$ satisfies that $b(a_0)>0$. By the usual results of comparison of solutions and formula~\eqref{b(p)} in the ODEs case (see Caraballo et al.~\cite{caloNonl}), $b(p)\leq b_n(p)$ for $p\in P$ and $n\geq 1$ and the sequence $(b_n(p))_n$ is nonincreasing. In particular, there exists the limit $\lim_{n\to\infty} b_n(a_0)=b_0= b(a_0)>0$.
\par
After these arguments, let us do the change of variables $v=w^{1-\theta}$ to transform~\eqref{perturbed} for each $n\geq 1$ and for $p=a_0$ into the nonhomogeneous linear equation
\begin{equation}\label{ode-perturbed}
v'= -(\varepsilon_n +a_0(t))\,v+1\,.
\end{equation}
Now the continuous spectrum of the linear ODE $v'= -a_0(t)\,v$ is $[-\lambda_2,0]$, so that the linear problem $v'= -(\varepsilon_n +a_0(t))\,v$ has an exponential dichotomy with trivial projectors and there exists a unique bounded solution for~\eqref{ode-perturbed} given by the formula
\[
v_n(t)=\int_{-\infty}^t e^{-\varepsilon_n(t-s)-\int_s^t \! a_0} \,ds\,,\quad t\in \R\,,
\]
and any other positive solution $v(t)$ of~\eqref{ode-perturbed} converges to $v_n(t)$ as $t\to \infty$ exponentially fast. Note that  necessarily $v_n(t)$ corresponds to $b_n(a_0{\cdot}t)$ for each $n\geq 1$ by the change of variables.
In particular, $v_n(0)=\int_{-\infty}^0 e^{\varepsilon_n t+\int_0^t \!a_0} \,dt$.
Since the continuous maps $f_n(t)=e^{\varepsilon_n t+\int_0^t \! a_0}$ defined on $(-\infty,0]$ satisfy $0\leq f_n\leq f_{n+1}$ for $n\geq 1$,  Lebesgue's monotone convergence theorem says that
\[
\lim_{n\to\infty} v_n(0) = \int_{-\infty}^0 e^{ \int_0^t \! a_0} \,dt =:\beta\in (0,\infty]\,.
\]
Hence, undoing the change of variables,  $w_n(0)=v_n(0)^{\frac{1}{1-\theta}}$ is the initial value $b_n(a_0)$ of the entire solution $b_n(a_0{\cdot}t)$ and, as seen before, $\lim_{n\to\infty} b_n(a_0)=b_0>0$. Since
\[
\lim_{n\to\infty}w_n(0)=\beta^{\frac{1}{1-\theta}}=b_0>0\Longleftrightarrow \beta\in (0,\infty) \Longleftrightarrow e^{ \int_0^t \! a_0}\in L^1((-\infty,0]) \,,
\]
the proof is finished.
\end{proof}
\begin{notas}\label{nota-odes}
1. The uniform continuity of $a_0(t)$ is required in order to build its hull. Note that the result might have been given with a collective formulation for a family of equations over a general compact and connected flow $P$.
\par
2. It is easy to check that condition~\eqref{integrable} implies that the Lyapunov exponent $\lambda_s^-(a_0)\geq 0$, so that  necessarily $\lambda_2\geq 0$ if it holds. The restriction in (ii) is $\lambda_1\geq 0$.
\par
3. Condition~\eqref{integrable} holds for the almost periodic maps $a_0(t)$ with associated  spectrum $\Sigma=\{0\}$ considered in Example~5.13 in~\cite{calaobsa}. More precisely, whenever $\widetilde a:\R\to\R$ is an almost periodic  function with zero mean value and whose integral $\int_0^t \!\widetilde  a$ grows like (or faster than) $t^\beta$ as $t\to \infty$ for some $0<\beta<1$, the almost periodic map with zero mean value $a_0(t)=\widetilde a(-t)$, $t\in\R$ satisfies the former condition (see Johnson et al.~\cite{joklpa}). Such maps $\widetilde a$ have been explicitly built in the literature by several authors, such as Poincar\'{e}~\cite{poin}, Conley and Miller~\cite{comi}, Zhikov and Levitan~\cite{zhle} and Johnson and Moser~\cite{jomo}.
\end{notas}
The next theorem is fundamental in the paper, due to its later application to much more general problems in Theorem \ref{teor-coro}. We determine the  integrability condition~\eqref{a1} on $c(t,p_0)^{\theta-1}$ for $t\leq 0$, sufficient to guarantee a nontrivial global attractor for a family of purely dissipative problems including Chaffe-Infante equations (for $\theta=3$). Note that $\theta$ is the exponent in the nonlinear term. Condition~\eqref{a1} is also necessary under some restrictions. This result is specially relevant for  $\Sigma_{\text{pr}}=\{0\}$, stating an if and only if condition for $b(p_0)\gg 0$, at least with Neumann or Robin boundary conditions. For simplicity, we reduce attention to the positive cone (see Proposition~\ref{prop-atractor cono positivo}), but note that the dissipative term can be extended to the full line in an odd way,  as in condition (c4).
\begin{teor}\label{teor-atractor no trivial}
Let us consider the family of purely dissipative problems over a continuous flow on a compact and connected metric space $P$, given for $y\geq 0$ by
\begin{equation}\label{chafee-infante}
\left\{\begin{array}{l} \des\frac{\partial y}{\partial t}  =
 \Delta \, y+h(p{\cdot}t,x)\,y-\rho\,y^\theta\,,\quad t>0\,,\;\,x\in U, \;\,\text{for each}\;p\in P, \\[.2cm]
By:=\bar\alpha(x)\,y+\kappa\,\des\frac{\partial y}{\partial n} =0\,,\quad  t>0\,,\;\,x\in \partial U,
\end{array}\right.
\end{equation}
with Neumann, Robin or Dirichlet boundary conditions, where $h\in C(P\times \bar U)$, $\rho>0$ and $\theta>1$, and let $\A_+$ be the global attractor.  Associated to the linear problems~\eqref{linealizada}, let $e(p)\gg 0$, $p\in P$ be the normalized vectors leading the principal bundle in the continuous separation, let $c(t,p)$ be the associated continuous 1-dim linear cocycle and denote by $\Sigma_{\text{pr}}=[\alpha_P,\lambda_P]$ the principal spectrum. Then:
\begin{itemize}
\item[(i)] Whenever for some $p_0\in P$ the following assumption holds,
\begin{equation}\label{a1}
c(t,p_0)^{\theta-1}\in L^1((-\infty,0])\,,
\end{equation}
the  section of the attractor $A_+(p_0)\not=\{0\}$, or equivalently $b(p_0)\gg 0$.
\item[(ii)] If $\Sigma_{\text{pr}}=[\alpha_P,\lambda_P]$ with $0\leq \alpha_P\leq \lambda_P$ and, with either Neumann or Robin boundary conditions, for a certain $p_0\in P$ it is $b(p_0)\gg 0$, then condition {\rm \eqref{a1}} holds.
\end{itemize}
\end{teor}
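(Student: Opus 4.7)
\emph{Overall strategy.} The plan is to lift Lemma \ref{lema-odes} to the PDE setting through the one-dimensional principal bundle. Write formally $c(t, p_0) = \exp \int_0^t a(s)\, ds$; then $e(p_0 \cdot t)$ satisfies $[\Delta + h(p_0 \cdot t, \cdot)]\, e(p_0 \cdot t) - \partial_t e(p_0 \cdot t) = a(t)\, e(p_0 \cdot t)$. Substituting the ansatz $y(t) = v(t)\, e(p_0 \cdot t)$ in the parabolic differential inequality $\partial_t y \lessgtr \Delta y + h y - \rho y^\theta$ converts it, pointwise in $x$, into $v' \lessgtr a\, v - \rho\, v^\theta\, e(p_0 \cdot t)(x)^{\theta - 1}$. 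Replacing $e(x)^{\theta - 1}$ by its supremum, which is $1$ by the sup-norm normalization, produces a \emph{sub}-solution ODE that suffices for (i); replacing it by its infimum $\delta^{\theta - 1} > 0$, available only under Neumann or Robin conditions by compactness and $e(p) \gg 0$, produces a \emph{super}-solution ODE that suffices for (ii). After the homothety $w = ((\theta - 1)\rho)^{1/(\theta - 1)}\, v$, both ODEs are precisely the Bernoulli equation of Lemma \ref{lema-odes} with $a_0 = (\theta - 1)\, a$ and integrability criterion $e^{\int a_0} = c(\cdot, p_0)^{\theta - 1} \in L^1((-\infty, 0])$.

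\emph{Part (i).} Under \eqref{a1}, Lemma \ref{lema-odes}(i)---applied through the explicit formula $v_0(t) = c(t, p_0)\bigl[(\theta - 1)\rho \int_{-\infty}^t c(s, p_0)^{\theta - 1}\, ds\bigr]^{-1/(\theta - 1)}$, which bypasses any actual use of $a$ and is what the Bernoulli substitution $u = v^{1 - \theta}$ produces---supplies a bounded positive entire solution $v_0$ of $v' = a v - \rho v^\theta$. Then $y_0(t) = v_0(t)\, e(p_0 \cdot t)$ is a bounded entire sub-solution of the parabolic problem along the trajectory of $p_0$. Choose $R > 0$ large enough that $R e_0 \geq v_0(-T)\, e(p_0 \cdot (-T))$ for every $T \geq 0$, which is possible because $v_0$ and the family $\{e(p)\}_{p \in P}$ are uniformly bounded. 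Parabolic comparison gives $u(T, p_0 \cdot (-T), R e_0) \geq v_0(0)\, e(p_0)$ for every $T \geq 0$, and letting $T \to \infty$ in formula \eqref{b(p)}---valid for all three types of boundary conditions---yields $b(p_0) \geq v_0(0)\, e(p_0) \gg 0$.

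\emph{Part (ii).} For $\varepsilon > 0$ perturb the linear coefficient to $h_\varepsilon = h + \varepsilon/(\theta - 1)$; the associated one-dimensional cocycle is $c(t, p_0)\, e^{\varepsilon t/(\theta - 1)}$, and the shifted principal spectrum is contained in $(0, \infty)$. By Theorems \ref{teor-purely diss 1}(iv) and \ref{teor-sublineal purely-dis}(iii), the perturbed attractor map $b_\varepsilon$ is continuous and uniformly strongly positive, and a pointwise comparison in the positive cone (the $\varepsilon$-right-hand side dominates the original for $y \geq 0$) gives $b(p_0) \leq b_\varepsilon(p_0)$. Under Neumann or Robin boundary conditions, $\delta := \inf_{(p, x) \in P \times \bar U} e(p)(x) > 0$, so the super-solution ansatz $\psi_\varepsilon(t) = v_\varepsilon(t)\, e(p_0 \cdot t)$ reduces to the ODE $v_\varepsilon' = (a + \varepsilon/(\theta - 1))\, v_\varepsilon - \rho\, \delta^{\theta - 1}\, v_\varepsilon^\theta$, whose Bernoulli transform $u_\varepsilon = v_\varepsilon^{1 - \theta}$ is given by the unique bounded positive solution
\begin{equation*}
u_\varepsilon(0) \;=\; (\theta - 1)\, \rho\, \delta^{\theta - 1} \int_{-\infty}^0 e^{\varepsilon s}\, c(s, p_0)^{\theta - 1}\, ds.
\end{equation*}

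\emph{Closing the argument.} A pullback comparison in the opposite direction to part (i) shows $b_\varepsilon(p_0) \leq v_\varepsilon(0)\, e(p_0)$: the set $\{v_\varepsilon(-T)\, e(p_0 \cdot (-T)) : T \geq 0\}$ is bounded in $X$, hence pullback attraction of $A_\varepsilon(p_0) = \{b_\varepsilon(p_0)\}$ forces $u_\varepsilon(T, p_0 \cdot (-T), v_\varepsilon(-T)\, e(p_0 \cdot (-T))) \to b_\varepsilon(p_0)$, while the super-solution inequality keeps the same quantity $\leq v_\varepsilon(0)\, e(p_0)$ for every $T$. Therefore $\|b(p_0)\| \leq \|b_\varepsilon(p_0)\| \leq v_\varepsilon(0)$, i.e., $u_\varepsilon(0) \leq \|b(p_0)\|^{1 - \theta}$ uniformly in $\varepsilon > 0$. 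Letting $\varepsilon \downarrow 0$ and applying Lebesgue's monotone convergence in the displayed formula yields
\begin{equation*}
(\theta - 1)\, \rho\, \delta^{\theta - 1} \int_{-\infty}^0 c(s, p_0)^{\theta - 1}\, ds \;\leq\; \|b(p_0)\|^{1 - \theta} \;<\; \infty,
\end{equation*}
which is \eqref{a1}. The main technical obstacle is the rigorous verification of the sub- and super-solution inequalities when $c(t, p_0)$ need not be pointwise differentiable in $t$; this is handled by formulating them in the mild-solution sense and by relying on the explicit Bernoulli-transformed integral identities, in which only the continuity of $c(\cdot, p_0)$ and the absolute continuity of $t \mapsto \int_{-\infty}^t c(s, p_0)^{\theta - 1}\, ds$ enter.
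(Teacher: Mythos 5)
Your part (i) is essentially the paper's argument in streamlined form: the product ansatz $v(t)\,e(p_0{\cdot}t)$ on the principal bundle, the Bernoulli reduction, the bound $e(p)^{\theta-1}\leq 1$ to obtain a subsolution, and the pullback formula \eqref{b(p)}. Writing the bounded entire solution directly as $v_0(t)=c(t,p_0)\bigl[(\theta-1)\rho\int_{-\infty}^t c(s,p_0)^{\theta-1}\,ds\bigr]^{-1/(\theta-1)}$ is a genuine simplification: since $v_0(t)\,e(p_0{\cdot}t)=W(t)^{-1/(\theta-1)}\zeta(t)$ with $\zeta(t)=c(t,p_0)\,e(p_0{\cdot}t)$ an entire orbit of $\tau_L$ and $W$ of class $C^1$, the mild subsolution inequality can be verified using only the positivity of $\phi(t,p)$, the cocycle identity and $0\leq e(p)\leq 1$, so you avoid both the paper's passage to a cohomologous smooth cocycle and its approximation of $h$ by regular coefficients. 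You should still write out that verification and invoke a comparison principle for mild sub/supersolutions rather than leaving it as a remark, but the route is sound.

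Part (ii) has a genuine gap. Everything rests on the inequality $b_\varepsilon(p_0)\leq v_\varepsilon(0)\,e(p_0)$, which you justify by asserting that $A_\varepsilon(p_0)=\{b_\varepsilon(p_0)\}$ and that pullback attraction of the bounded family $\{\psi_\varepsilon(-T)\}_{T\geq 0}$ forces $u_\varepsilon(T,p_0{\cdot}(-T),\psi_\varepsilon(-T))\to b_\varepsilon(p_0)$. Neither claim is true: $A_\varepsilon(p_0)$ always contains $0$, $-b_\varepsilon(p_0)$ and connecting orbits, and pullback attraction only says the images approach the whole section $A_\varepsilon(p_0)$, not its supremum. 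What the supersolution property and monotonicity actually give is that $u_\varepsilon(T,p_0{\cdot}(-T),\psi_\varepsilon(-T))$ is nonincreasing in $T$ and converges to some $\beta_\infty\in A_\varepsilon(p_0)$ with $\beta_\infty\leq\psi_\varepsilon(0)$; but a bounded entire positive supersolution need not dominate $b$ --- for the autonomous model $v'=v-v^2$ the heteroclinic from $0$ to $1$ is such a supersolution with $v(0)$ arbitrarily small, while $b\equiv 1$. To conclude $\beta_\infty\geq b_\varepsilon(p_0)$ you would need an additional ingredient, e.g.\ uniqueness of strongly positive entire orbits bounded away from zero for the strongly sublinear perturbed semiflow (plausible here precisely because $\psi_\varepsilon\geq \kappa_1\gg 0$ under Neumann or Robin conditions), and that is a nontrivial statement you neither prove nor cite. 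The paper sidesteps the issue entirely: its supersolution is launched from the \emph{large} initial data $r\,\wit c(-s,p_0)\,e(p_0{\cdot}(-s))\geq \bar r e_0$, so that formula \eqref{b(p)} applies on both sides of the comparison and the question is reduced to the positivity of the scalar ODE attractor $b^*(p_0)$, after which Lemma~\ref{lema-odes}~(ii) (where your $\varepsilon$-perturbation reappears, but at the ODE level, where the Bernoulli change of variables gives exact solvability and hence equality rather than a one-sided estimate) concludes. Either adopt that device or supply the missing uniqueness lemma for entire orbits.
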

\begin{proof}
It is well-known that, with $P$ compact, any continuous cocycle is cohomologous to a smooth one (see Lemma~3.2 in Johnson et al.~\cite{jops}). More precisely, we assert that there exists a smooth cocycle $c_0(t,p)$ which we write for convenience as
\[
c_0(t,p)=e^{\int_0^t \frac{1}{\theta-1}\,a(p{\cdot}s)\,ds}
\]
for some $a\in C(P)$, and a continuous map $f:P\to \R\setminus\{0\}$ such that $c_0(t,p)= f(p{\cdot}t)\,c(t,p)\,f(p)^{-1}$ for $p\in P$ and $t\in\R$. Besides, since $P$ is compact and connected, either $0<c_1\leq f(p)\leq c_2$ for $p\in P$ or $-c_2\leq f(p)\leq -c_1$ for $p\in P$, for some constants $0<c_1<c_2$. We can assume without loss of generality that $f$ takes positive values.
Let us fix a  $p_0\in P$ and write
\[
c_0(t,p_0)^{\theta-1}=e^{\int_0^t a(p_0{\cdot}s)ds}=f(p_0{\cdot}t)^{\theta-1}\,c(t,p_0)^{\theta-1}\,f(p_0)^{1-\theta}\,,\quad t\in\R\,.
\]
Thus, thanks to the boundedness of $f$ over $P$, condition~\eqref{a1} holds if and only if condition~\eqref{integrable} holds for the map $a(p_0{\cdot}t)$, $t\in \R$.
\par
(i) Let $p_0\in P$ be such that~\eqref{a1} holds. Then,  as a consequence of the previous argumentation and according to Lemma~\ref{lema-odes}~(i), there exists  an entire positive and bounded solution $w_0(t)$, $t\in\R$ of the scalar ODE given for $p=p_0$ within the family of purely dissipative and sublinear ODEs on the positive half plane $w\geq 0$,
\begin{equation}\label{ode-w}
w'=\frac{1}{\theta-1}\,a(p{\cdot}t)\,w-\frac{1}{\theta-1}\,w^\theta,\quad {p\in P}.
\end{equation}
We use the notation $w(t,p,r)$ for the value at time $t$ of the solution of the scalar ODE for $p$ with initial condition $r\geq 0$. It is well-known (see Caraballo et al.~\cite{caloNonl}) that this family of equations admits a global attractor which we write as  $\cup_{p\in P} \{p\}\times [0,b^*(p)]\subset P\times \R_+$, and for $r>0$ big enough,
\begin{equation}\label{b odes}
b^*(p)=\lim_{t\to \infty} w(t,p{\cdot}(-t),r)\,,\quad p\in P\,.
\end{equation}
Note that the existence of $w_0(t)$ implies that $b^*(p_0)>0$.
\par
Now, the 1-dim cocycle associated to the linear family for $p\in P$,
\begin{equation}\label{lineal modificada}
\left\{\begin{array}{l} \des\frac{\partial y}{\partial t}  =
 \Delta \, y+\Big(h(p{\cdot}t,x)-\frac{1}{\theta-1}\,a(p{\cdot}t)\Big)\,y\,,\quad t>0\,,\;\,x\in U, \\[.2cm]
By:=\bar\alpha(x)\,y+\kappa\,\des\frac{\partial y}{\partial n} =0\,,\quad  t>0\,,\;\,x\in \partial U,
\end{array}\right.
\end{equation}
is given by $\wit c(t,p)=c(t,p)\,e^{-\int_0^t \frac{1}{\theta-1}\,a(p{\cdot}s)\,ds}=c(t,p)\,c_0(t,p)^{-1}=f(p)\,f(p{\cdot}t)^{-1}$ and $\eta_1\leq \wit c(t,p)\leq \eta_2$ for certain $\eta_1,\eta_2>0$, for any $p\in P$ and $t\in \R$. In fact this family has the same principal bundle as the one for  the linear coefficient $h$ (see~\cite{calaobsa}). Then, denoting by $\wit\phi(t,p)$ the linear cocycle providing the solutions of \eqref{lineal modificada}, $z(t,p)=\wit \phi(t,p)\,e(p)=\wit c(t,p)\,e(p{\cdot}t)$, $t\geq 0$ is the solution of the previous linear problem for $p$ starting at $e(p)$ and  $z(s,p):=\wit c(s,p)\,e(p{\cdot}s)$ for $s\leq 0$ defines a backward extension, since for each $s\leq 0$ and $0\leq t\leq -s$, $\wit\phi(t,p{\cdot}s)\,z(s,p)=\wit c(s,p)\,\wit\phi(t,p{\cdot}s)\,e(p{\cdot}s)=\wit c(s,p)\,\wit c(t,p{\cdot}s)\,e(p{\cdot}(s+t))=\wit c(s+t,p)\,e(p{\cdot}(s+t))=z(s+t,p)$.
\par
Back to the fixed $p_0$, for each  $s\geq 0$, and $r>0$ big enough, we consider the map
\begin{align*}
y_s(t,x)=&w(t,p_0{\cdot}(-s),r)\,\wit c(t-s,p_0)\, e(p_0{\cdot}(t-s))(x)\\
=& w(t,p_0{\cdot}(-s),r)\,\wit c(-s,p_0)\,z(t,p_0{\cdot}(-s))(x)\,,\quad t\geq 0\,,\;\, x\in\bar U.
\end{align*}
Let us assume for the moment that $h(p_0{\cdot}t,x)$ and $a(p_0{\cdot}t)$ are smooth enough with respect to $t$ and $x$, so that mild solutions of the linear problem~\eqref{lineal modificada} for $p_0{\cdot}(-s)$ become classical solutions (in fact a H\"{o}lder-continuity condition is enough, see Friedman~\cite{frie}).
If this is the case, we can write:
\begin{equation*}
\left\{\begin{array}{l} \des\frac{\partial y_s}{\partial t}  = \Delta \, y_s+h(p_0{\cdot}(t-s),x)\,y_s-\frac{w(t,p_0{\cdot}(-s),r)^{\theta}\,\wit c(t-s,p_0)\,e(p_0{\cdot}(t-s))(x)}{\theta-1} ,\\[.1cm] \hspace{9,5cm} \quad t>0\,,\;\,x\in U, \\
y_s(0,x)=r\,\wit c(-s,p_0)\, e(p_0{\cdot}(-s))(x)\,,\quad x\in \bar U,
\\[.1cm]
By_s:=\bar\alpha(x)\,y_s+\kappa\,\des\frac{\partial y_s}{\partial n} =0\,,\quad  t>0\,,\;\,x\in \partial U.
\end{array}\right.
\end{equation*}
Note that for any $s\geq 0$, for  $t\geq 0$ and $x\in \bar U$ we can bound
\[
-\frac{w(t,p_0{\cdot}(-s),r)^{\theta}\,\wit c(t-s,p_0)\,e(p_0{\cdot}(t-s))(x)}{\theta-1} \leq -\frac{1}{(\theta-1)\,
\eta_2^{\theta-1}}\,y_s(t,x)^{\theta}
\]
for $\eta_2>0$ the upper bound for the 1-dim cocycle $\wit c$ and $1$ the upper bound for $e(p)^{\theta-1}(x)$ for $p\in P$, $x\in\bar U$. Then, for each $s\geq 0$ fixed, $y_s(t,x)$ is a regular subsolution of
the problem for $p=p_0{\cdot}(-s)$ within the family on the positive cone
\begin{equation}\label{casi el mismo}
\left\{\begin{array}{l} \des\frac{\partial y}{\partial t}  =
 \Delta \, y+h(p{\cdot}t,x)\,y-\frac{1}{(\theta-1)\, \eta_2^{\theta-1}}\,y^{\theta}\,,\quad t>0\,,\;\,x\in U,  \\[.2cm]
By=0\,,\quad  t>0\,,\;\,x\in \partial U,
\end{array}\right.
\end{equation}
and with the regularity assumptions on $h(p_0{\cdot}t,x)$,  at least for the problems along the orbit of $p_0$, mild solutions are classical.
Then, denoting by $\bar\tau(t,p,z)=(p{\cdot}t,\bar u(t,p,z))$ the induced skew-product semiflow by these last problems, for $p=p_0{\cdot}(-s)$ we can conclude, using well-known comparison principles of classical solutions (for instance, see Theorem~1 in Fife and Tang~\cite{fita} in a larger context), that
\[
y_s(t,x)\leq \bar u(t,p_0{\cdot}(-s),r\,\wit c(-s,p_0) \, e(p_0{\cdot}(-s)))(x)\,, \quad t\geq 0\,, \; x\in \bar U.
\]
Now, fixed $e_0\gg 0$ the one in \eqref{bvp}, since $\wit c(t,p_0)$ is bounded above for $t\in\R$, we have that $r\,\wit c(-s,p_0) \, e(p_0{\cdot}(-s))\leq \bar r e_0$ for $s\geq 0$, for a large enough $\bar r$ so that~\eqref{b(p)} applies for $\bar\tau$, and then $y_s(t,x)\leq \bar u(t,p_0{\cdot}(-s),\bar r e_0)(x)$ for  $s, t\geq 0$, $x\in\bar U$. Taking $s=t$ we get that $y_t(t,x)=w(t,p_0{\cdot}(-t),r)\, e(p_0)(x)\leq \bar u(t,p_0{\cdot}(-t),\bar r e_0)(x)$ for any $t\geq 0$, $x\in\bar U$; that is, $w(t,p_0{\cdot}(-t),r)\,e(p_0)\leq \bar u(t,p_0{\cdot}(-t),\bar r e_0)$ for  $t\geq 0$. At this point we apply~\eqref{b odes} and~\eqref{b(p)}  to deduce, taking limits as $t\to\infty$, that $b^*(p_0)\, e(p_0)\leq \bar b(p_0)$, for $\bar b$ the upper boundary map of the global attractor for $\bar\tau$. Thus, $\bar b(p_0)\gg 0$ and the global attractor for problems~\eqref{casi el mismo} is nontrivial.
\par
Note that the only difference between this  family of problems and the family~\eqref{chafee-infante} in the statement  is the constant appearing in the dissipative term. But this is unimportant, since it is easy to check that if $y(t,x)$ is a strongly positive entire bounded solution of~\eqref{casi el mismo} for $p=p_0$, then $\beta\,y(t,x)$ is a solution of~\eqref{chafee-infante} for $p=p_0$ just by taking the appropriate value of $\beta>0$. Therefore, $b(p_0)\gg 0$, that is, there is a  nontrivial section at $p_0$ in the attractor, as we wanted to prove.
\par
It only remains to withdraw the hypothesis that $h(p_0{\cdot}t,x)$ and $a(p_0{\cdot}t)$ have further regularity properties with respect to $t$ and $x$. If just continuity is assumed, we can  approximate the maps $h(p_0{\cdot}t,x)$ and $a(p_0{\cdot}t)$ by respective sequences $(h_{n})_n,\,(a_n)_n$ of sufficiently regular maps; more precisely, $h_n:\R\times \bar U\to \R$ of class $C^1$ in both arguments and $a_n:\R\to\R$ of class $C^{1\!}$ in $t$ so that  $h_n(t,x)\to h(p_0{\cdot}t,x)$ and $a_n(t)\to a(p_0{\cdot}t)$ as $n\to\infty$ uniformly on compact sets.  Now, for each  $s\geq 0$ and $n\geq 1$, let $w_{s,n}(t,r)$ denote the value at time $t$ of the solution of the shifted by $-s$ scalar ODE
\begin{equation*}
w'=\frac{1}{\theta-1}\,a_n(t-s)\,w-\frac{1}{\theta-1}\,w^\theta
\end{equation*}
with initial condition $r> 0$. Under the former conditions it is well-known that $w_{s,n}(t,r)\to w(t,p_0{\cdot}(-s),r)$ as $n\to\infty$, uniformly for $t$ on compact sets $[0,t_0]$ ($t_0>0$). Analogously, let $z_{s,n}(t,x)$ denote the classical solution of the shifted by $-s$ scalar linear regular PDEs problem
\begin{equation*}
\left\{\begin{array}{l} \des\frac{\partial y}{\partial t}  =
 \Delta \, y+\Big(h_n(t-s,x)-\frac{1}{\theta-1}\,a_n(t-s)\Big)\,y\,,\quad t>0\,,\;\,x\in U, \\[.2cm]
 y(0,x)=e(p_0{\cdot}(-s))(x) \,,\quad x\in\bar U, \\[.1cm]
By =0\,,\quad  t>0\,,\;\,x\in \partial U,
\end{array}\right.
\end{equation*}
Once more, $z_{s,n}(t,x)\to z(t,p_0{\cdot}(-s))(x)$ uniformly on compact sets of  $[0,\infty)\times \bar U$.
Now, since $z(t,p)=\wit c(t,p)\,e(p{\cdot}t)$, $t\geq 0$, we have that $\sup\{z(t,p)(x)\mid t\geq 0, x\in \bar U, p\in P\}\leq \eta_2$. Let us fix an $\eta_2'>\eta_2$. Then, for a fixed $s\geq 0$, given a $t_0>0$ there is an $n_0=n_0(t_0,s)$ such that $z_{s,n}(t,x)\leq \eta_2'$ for $t\in [0,t_0]$, $x\in\bar U$, for any $n\geq n_0$. Then, for $n\geq n_0$ it is easy to check that the regular map
\[
y_{s,n}(t,x)=w_{s,n}(t,r)\,\wit c(-s,p_0)\,z_{s,n}(t,x)\,,\quad t\geq 0\,,\;\, x\in\bar U
\]
is a subsolution  in $[0,t_0]\times \bar U$ of the regular problem
\begin{equation*}
\left\{\begin{array}{l} \des\frac{\partial y}{\partial t}  =
 \Delta \, y+h_n(t-s,x)\,y-\frac{1}{(\theta-1)\, \eta_2^{\theta-1}\,{(\eta_2')}^{\theta-1}}\,y^{\theta}\,,\quad t>0\,,\;\,x\in U,  \\[.2cm]
By=0\,,\quad  t>0\,,\;\,x\in \partial U,
\end{array}\right.
\end{equation*}
so that again the standard comparison principles imply that
\[
y_{s,n}(t,x)\leq \wit y_{s,n}(t,x) \,, \quad t\in[0,t_0]\,, \; x\in \bar U,
\]
where $\wit y_{s,n}(t,x)$ is the classical solution of the previous regular problem with initial value the map $r\,\wit c(-s,p_0) \, e(p_0{\cdot}(-s))(x)$, $x\in \bar U$. Then, taking limits as $n\to \infty$,
\[
y_s(t,x)\leq \wit u(t,p_0{\cdot}(-s),r\,\wit c(-s,p_0) \, e(p_0{\cdot}(-s)))(x)\,, \quad t\in[0,t_0]\,, \; x\in \bar U
\]
for $\wit u$ the nonlinear cocycle associated to the family for $p\in P$ on the positive cone
\begin{equation*}%\label{casi el mismo}
\left\{\begin{array}{l} \des\frac{\partial y}{\partial t}  =
 \Delta \, y+h(p{\cdot}t,x)\,y-\frac{1}{(\theta-1)\, \eta_2^{\theta-1}\,{(\eta_2')}^{\theta-1}}\,y^{\theta}\,,\quad t>0\,,\;\,x\in U,  \\[.2cm]
By=0\,,\quad  t>0\,,\;\,x\in \partial U.
\end{array}\right.
\end{equation*}
Since $t_0$ can be any, we can conclude that for any $s\geq 0$,
\[
y_s(t,x)\leq \wit u(t,p_0{\cdot}(-s),r\,\wit c(-s,p_0) \, e(p_0{\cdot}(-s)))(x)\,, \quad t\geq 0\,, \; x\in \bar U,
\]
and the proof is finished just as before.
\par
(ii) We maintain all the former notation in the proof. This time we only write the proof assuming enough regularity on the coefficients so that mild solutions are classical solutions, but note that in the general case it is necessary to return to the approximation arguments explained in (i).
\par
Let $p_0\in P$ be such that $b(p_0)\gg 0$. First of all, by the arguments in the initial paragraph,  $\Sigma_{\text{pr}}=[\alpha_P,\lambda_P]$ with $0\leq \alpha_P\leq \lambda_P$ is precisely the continuous spectrum of the scalar family of ODEs $v'=\frac{1}{\theta-1}\,a(p{\cdot}t)\,v$, $p\in P$. Thus, that of the family $v'=a(p{\cdot}t)\,v$, $p\in P$ is $[(\theta-1)\alpha_P,(\theta-1)\lambda_P]$  because $\theta>1$.
Besides, by the classical spectral theory of Sacker and Sell~\cite{sase}, the continuous spectrum of the equation $v'=a(p_0{\cdot}t)\,v$ is a compact interval $[\lambda_1,\lambda_2]\subseteq [(\theta-1)\alpha_P,(\theta-1)\lambda_P]$, so that $0\leq \lambda_1\leq\lambda_2$. So, it suffices to check that equation~\eqref{ode-w} for $p_0$ has an entire positive and bounded solution $w_0(t)$, in order to apply Lemma~\ref{lema-odes}~(ii) to get that~\eqref{integrable} holds for the map $a(p_0{\cdot}t)$, $t\in \R$, which is equivalent to condition~\eqref{a1}.
\par
To do so, this time for any $s\geq 0$, for  $t\geq 0$ and $x\in \bar U$ we can write
\[
-\frac{w(t,p_0{\cdot}(-s),r)^{\theta}\,\wit c(t-s,p_0)\,e(p_0{\cdot}(t-s))(x)}{\theta-1}\geq - \frac{\eta}{\theta-1}\,y_s(t,x)^{\theta}
\]
for an appropriate $\eta>0$. Note that here the lower bound $0<\eta_1\leq \wit c(t,p)$ has been used, together with the fact that with Neumann or Robin boundary conditions there exists a $c_0>0$ such that $c_0\leq e(p)(x)$ for any $p\in P$ and $x\in\bar U$ (however, this fails to be true in the Dirichlet case). Then, for each $s\geq 0$, this time $y_s(t,x)$ is a regular supersolution for the problem for $p=p_0{\cdot}(-s)$ in the positive cone  within the family for $p\in P$,
\begin{equation*}
\left\{\begin{array}{l} \des\frac{\partial y}{\partial t}  =
 \Delta \, y+h(p{\cdot}t,x)\,y-   \frac{\eta}{\theta-1}\,y^{\theta}\,,\quad t>0\,,\;\,x\in U,   \\[.2cm]
By=0\,,\quad  t>0\,,\;\,x\in \partial U.
\end{array}\right.
\end{equation*}
Denoting by $\bar\tau(t,p,z)=(p{\cdot}t,\bar u(t,p,z))$  the induced skew-product semiflow for these last problems, we can conclude by the standard comparison principles that
\[
y_s(t,x)\geq \bar u(t,p_0{\cdot}(-s),r\,\wit c(-s,p_0) \, e(p_0{\cdot}(-s)))(x)\,, \quad t\geq 0\,, \; x\in \bar U.
\]
Taking $s=t$ we get that $w(t,p_0{\cdot}(-t),r)\geq y_t(t,x)=w(t,p_0{\cdot}(-t),r)\, e(p_0)(x)\geq \bar u(t,p_0{\cdot}(-t),\bar r e_0)(x)$ for any $t\geq 0$, $x\in\bar U$, for an appropriate $\bar r$ which can be taken as big as necessary so that the pullback formula~\eqref{b(p)} holds, provided that $r$ is big too. That is, $w(t,p_0{\cdot}(-t),r)\geq \bar u(t,p_0{\cdot}(-t),\bar r e_0)(x)$ for  $t\geq 0$, $x\in\bar U$. Taking limits as $t\to \infty$, $b^*(p_0)\geq \bar b (p_0)(x)$, $x\in\bar U$ for the upper boundary  map $\bar b$ of the attractor for $\bar \tau$. Since the hypothesis $b(p_0)\gg 0$ for the family~\eqref{chafee-infante} is independent of the positive constant in the dissipative term, it is clear that $\bar b (p_0)\gg 0$. Then, $b^*(p_0)>0$ and  $b^*(p_0{\cdot}t)$, $t\in\R$ provides us with the desired entire positive and bounded solution of equation~\eqref{ode-w} for $p_0$. The proof is complete.
\end{proof}
For further insight into condition \eqref{a1}, we prove that it presumes an asymptotic behaviour of the 1-dim cocycle as $t\to -\infty$.
\begin{prop}\label{prop-puntos asintoticos}
Assume that $c(t,p)$ is the 1-dim linear cocycle associated to a linear family~\eqref{linealizada} over a compact base flow $P$. If for some $\beta>0$ and for some $p_0\in P$, $c(t,p_0)^{\beta}\in L^1((-\infty,0])$, then $p_0$ is an asymptotic point at $-\infty$, i.e., $\lim_{t\to -\infty} c(t,p_0)=0$, and $c(t,p_0)^{\beta'}\in L^1((-\infty,0])$ for any $\beta'>\beta$.
\end{prop}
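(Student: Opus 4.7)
The plan rests on two key inputs. First, since $P$ is compact and $c(t,p)$ is continuous and strictly positive on $\R\times P$ (the positivity on $t\ge 0$ comes from~\eqref{c} and the strong positivity of $\phi(t,p)$, and it extends to $t<0$ via the cocycle identity $c(-t,p)=1/c(t,p\cdot(-t))$), it follows that for any compact interval $J\subset \R$ there exist constants $0<m_J\le M_J<\infty$ with $m_J\le c(t,q)\le M_J$ for all $(t,q)\in J\times P$. In particular, fixing $J=[0,1]$, there is a constant $m>0$ such that $c(s,q)\ge m$ for every $s\in[0,1]$ and every $q\in P$. Second, the cocycle identity reads $c(t+s,p_0)=c(s,p_0\cdot t)\,c(t,p_0)$ for all $t,s\in\R$.

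First I would establish that $\lim_{t\to-\infty} c(t,p_0)=0$. The natural route is by contradiction: suppose there is $\delta>0$ and a sequence $t_n\to-\infty$ with $c(t_n,p_0)\ge \delta$. Using the cocycle identity together with the uniform lower bound $c(s,q)\ge m$ for $(s,q)\in[0,1]\times P$, one gets
\[
c(t_n+s,p_0)=c(s,p_0\cdot t_n)\,c(t_n,p_0)\ge m\,\delta\quad\text{for every } s\in[0,1].
\]
Passing to a subsequence one may assume the intervals $[t_n,t_n+1]$ are pairwise disjoint and contained in $(-\infty,0]$. Integrating the lower bound $c(t,p_0)^\beta\ge (m\delta)^\beta$ over each of these intervals contradicts the hypothesis $c(t,p_0)^\beta\in L^1((-\infty,0])$. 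This step is the main (and only mildly delicate) obstacle: everything hinges on the uniform lower bound of $c(s,q)$ on $[0,1]\times P$, which in turn relies on compactness of $P$ together with the continuity and positivity of $c$.

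With $\lim_{t\to-\infty}c(t,p_0)=0$ in hand, the second assertion is immediate. Pick $T\le 0$ such that $c(t,p_0)\le 1$ for every $t\le T$. For any $\beta'>\beta$ and $t\le T$, since $c(t,p_0)\in(0,1]$ and $\beta'-\beta>0$,
\[
c(t,p_0)^{\beta'}=c(t,p_0)^{\beta'-\beta}\,c(t,p_0)^{\beta}\le c(t,p_0)^{\beta},
\]
so $c(\,\cdot\,,p_0)^{\beta'}\in L^1((-\infty,T])$ by hypothesis. On the bounded interval $[T,0]$ the continuous function $c(t,p_0)^{\beta'}$ is of course integrable, and combining both pieces yields $c(t,p_0)^{\beta'}\in L^1((-\infty,0])$, completing the argument.
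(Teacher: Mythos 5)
Your proof is correct, but it takes a genuinely different route to the key step. The paper first invokes the cohomology lemma (Lemma~3.2 in Johnson, Palmer and Sell) to replace $c(t,p)$ by a smooth cocycle $e^{\int_0^t a(p\cdot s)\,ds}$ with $a\in C(P)$, and then derives the contradiction from the uniform continuity of $t\mapsto \ln c(t,p_0)$ (its derivative $a(p_0\cdot t)$ is bounded), which forces $c(\cdot,p_0)^\beta$ to stay above a fixed positive constant on small intervals $[t_n,t_n+\delta]$. You instead exploit the compactness of $P$ together with the joint continuity and positivity of $c$ on $[0,1]\times P$ to obtain a uniform lower bound $c(s,q)\ge m>0$ there, and then use the cocycle identity $c(t_n+s,p_0)=c(s,p_0\cdot t_n)\,c(t_n,p_0)$ to propagate the assumed lower bound $c(t_n,p_0)\ge\delta$ over the whole unit interval $[t_n,t_n+1]$. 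Both arguments reach the same contradiction with integrability; yours is somewhat more self-contained, as it avoids the reduction to a smooth cocycle and works directly with the continuity of $c$ guaranteed by the continuous separation, while the paper's version has the side benefit that the smooth-cocycle normalization is reused elsewhere (notably in the proof of Theorem~\ref{teor-atractor no trivial}). The second assertion is handled essentially as in the paper — writing $c(t,p_0)^{\beta'}=c(t,p_0)^{\beta'-\beta}c(t,p_0)^{\beta}$ and using that $c(t,p_0)\to 0$ makes the first factor bounded near $-\infty$ — with your added (and harmless) remark about integrability on the compact remainder $[T,0]$.
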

\begin{proof}
As it has already been mentioned in the proof of Theorem~\ref{teor-atractor no trivial}, every continuous linear cocycle is cohomologous to a smooth one. By the relation between them, it is enough to prove the result assuming that $c(t,p)$ is a smooth cocycle, that is, $c(t,p)=e^{\int_0^t a(p{\cdot}s)ds}$, $p\in P$, $t\in\R$ for some map $a\in C(P)$. We argue by contradiction and assume that $c(t,p_0)$ does not tend to $0$ as $t\to -\infty$. Then, there exists an $0<\varepsilon_0<1$ and a sequence $(t_n)_n\downarrow -\infty$ such that $c(t_n,p_0)\geq \varepsilon_0$, $n\geq 1$, so that $\ln c(t_n,p_0)\geq \ln\varepsilon_0$. Now, the map $\ln c(t,p_0)=\int_0^t a(p_0{\cdot}s)\,ds$ is uniformly continuous on $t\in \R$ due to its bounded derivative, and then, we can take a $\delta>0$ such that $\ln c(t,p_0)\geq 2\ln\varepsilon_0$ for $t\in [t_n,t_n+\delta]$ for any $n\geq 1$. That is, $c(t,p_0)^\beta\geq \varepsilon_0^{2\beta}$ for $t\in [t_n,t_n+\delta]$ for any $n\geq 1$. But this is absurd, since then $\int_{-\infty}^0 c(t,p_0)^\beta\,dt\geq \sum_{n\geq 1} \delta\,\varepsilon_0^{2\beta}=\infty$. Consequently, $\lim_{t\to -\infty} c(t,p_0)=0$.
Finally, if $\beta'>\beta$, fixed an $\varepsilon>0$ there exists a $t_0<0$ such that for $t\leq t_0$ we can write $c(t,p_0)^{\beta'}=c(t,p_0)^{\beta'-\beta}\,c(t,p_0)^\beta\leq \varepsilon^{\beta'-\beta}\,c(t,p_0)^\beta\in L^{1}((-\infty,0])$. The proof is finished.
\end{proof}
\begin{notas}\label{nota-pullback exponent}
1.  We fall into the framework of Theorem~\ref{teor-atractor no trivial} whenever we start with a single non-autonomous parabolic problem given for $y\geq 0$ by
\begin{equation*}%\label{pdefamily}
\left\{\begin{array}{l} \des\frac{\partial y}{\partial t}  =
 \Delta \, y+h_0(t,x)\,y-\rho\,y^\theta\,,\quad t>0\,,\;\,x\in U, \\[.2cm]
By:=\bar\alpha(x)\,y+\kappa\,\des\frac{\partial y}{\partial n} =0\,,\quad  t>0\,,\;\,x\in \partial U,
\end{array}\right.
\end{equation*}
with $h_0:\R\times\bar U\to \R$ bounded and uniformly continuous, $\rho>0$ and $\theta>1$. Then, one can build the hull $P$  of $h_0$ and define $p{\cdot}t$ as the shift flow on $P$, which is compact and connected, and consider the family of problems over the hull~\eqref{chafee-infante}, where $h:P\times\bar U\to \R$ is defined by $h(p,x)=p(0,x)$ for $p\in P$, $x\in \bar U$, in such a way that we recover the initial problem by taking the element $p(t,x)=h_0(t,x)$.
\par
In particular, almost periodic problems provide us with examples of different dynamical behaviours. Let $h_0(t)$ be an almost periodic function with zero mean value and let $P$ be its hull. If $e^{(\theta-1)\int_0^t h_0(s)ds}\in L^1((-\infty,0])$  we can assure that the family~\eqref{chafee-infante}   with linear coefficient $h(p{\cdot}t)+\gamma_0$ ($\gamma_0$ is the first eigenvalue of the BVP~\eqref{bvp}) has a nontrivial attractor. Note that  $c(t,p)=e^{\int_0^t h(p{\cdot}s)ds}$ is the associated 1-dim cocycle and, by Proposition~\ref{prop-puntos asintoticos}, it satisfies $\lim_{t\to -\infty}c(t,h_0)=0$.  On the other extreme, $h_0$ might be such that  every point $p\in P$ is recurrent at $\pm\infty$: see Example 4.20 in~\cite{laos} for more details. Since the recurrence property is incompatible with the asymptotic character of the cocycle at $-\infty$, at least with Neumann and Robin boundary conditions, we can deduce from Theorem~\ref{teor-atractor no trivial} that $\A=P\times\{0\}$ in those cases.
\par
 2. If for some $p_0$ the pullback Lyapunov exponent $\lambda_{i}'(p_0,z)>0$ for some $z\gg 0$ (equivalently, for any $z\gg 0$), then~\eqref{a1} holds. To see it, apply Proposition~\ref{prop-exponentes Lyap} to get that $0<\lambda_{i}'(p_0,z)=\liminf_{t\to -\infty} \ln c(t,p_0)/t$. Therefore, given $0<\lambda<\lambda_{i}'(p_0,z)$, there exists a $t_0<0$ ($t_0=t_0(p_0,\lambda)$) such that $\ln c(t,p_0)/t> \lambda$ for all $t\leq t_0$. From this, $c(t,p_0)< e^{\lambda t}$ for all $t\leq t_0$, so that the integrability condition in~\eqref{a1} is fulfilled for any $\theta>1$. The  advantage is that this exponent might be numerically computed, since it depends on the behaviour of the positive solutions of the associated linear problems. But note that $\lambda_{i}'(p_0,z)>0$ presumes that $\lambda_P>0$, whereas~\eqref{a1} is compatible with $\lambda_P=0$.
\end{notas}
Our next goal is to extend the application of (i) and (ii) in Theorem~\ref{teor-atractor no trivial} to a class as big as possible of families~\eqref{pdefamilynl}. Theorem~\ref{teor-coro} states to what extent this can be done.  Summing up, the sufficient condition~\eqref{a1} $c(t,p_0)^{\theta-1}\in L^1((-\infty,0])$ for a nontrivial section $A(p_0)$ is applicable whenever  some  additional regularity is assumed on the nonlinear term $g$, the appropriate value of $\theta$ being linked to the regularity of $g$. We first give an easy but still fundamental result for our purposes. The spaces of functions here used have been introduced in Section~\ref{sect-the problem}.
\begin{prop}\label{prop-analisis 1}
{\rm 1.} Assume that $g(p,x,y)\in C^{0,0,n+\beta}(P\times \bar U\times [0,\delta])$ for an integer $n\geq 1$, a $\beta\in (0,1^-]$ and a $\delta>0$, and
\begin{equation}\label{derivadas}
\des\frac{\partial^k g}{\partial y^k}(p,x,0)=0\quad\text{for any}\;\, p\in  P\;\text{and}\;\,  x\in \bar U,\;\,\text{for}\;\, 0\leq k\leq n\,.
\end{equation}
Then, there exists a $\rho_0>0$ such that
\begin{equation}\label{holder}
g(p,x,y)\geq -\rho_0\,y^{n+\beta}\quad \text{for}\;\, p\in P\,,\;x\in \bar U,\; y\in [0,\delta]\,.
\end{equation}
{\rm 2.} If  $g\in C^{0,0,n+1}(P\times \bar U\times [0,\delta])$ satisfies~\eqref{derivadas} and
\begin{equation}\label{n+1}
\des\frac{\partial^{n+1} g}{\partial y^{n+1}}(p,x,0)\leq C<0\quad\text{for any}\;\, p\in  P\;\text{and}\;\,  x\in \bar U,
\end{equation}
then there exist  a $0<\delta_1\leq \delta$ and a  $\wit \rho_0>0$ such that
\begin{equation}\label{holder-2}
g(p,x,y)\leq -\wit \rho_0\,y^{n+1}\quad \text{for}\;\, p\in P\,,\;x\in \bar U,\; y\in [0,\delta_1]\,.
\end{equation}
\end{prop}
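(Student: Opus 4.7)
My plan is to prove both assertions by exploiting Taylor's formula with integral remainder, leveraging the fact that all lower-order derivatives vanish at $y=0$ so the remainder carries the full information.

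For part 1, since $\frac{\partial^k g}{\partial y^k}(p,x,0)=0$ for $0\leq k\leq n$, Taylor's formula with integral remainder at order $n-1$ collapses to
\[
g(p,x,y)=\int_0^y \frac{(y-s)^{n-1}}{(n-1)!}\,\frac{\partial^n g}{\partial y^n}(p,x,s)\,ds\,,\quad y\in[0,\delta]\,.
\]
The hypothesis $g\in C^{0,0,n+\beta}$ together with the vanishing $\frac{\partial^n g}{\partial y^n}(p,x,0)=0$ provides a constant $M>0$ (the H\"older/Lipschitz seminorm) with
\[
\Big|\frac{\partial^n g}{\partial y^n}(p,x,s)\Big|=\Big|\frac{\partial^n g}{\partial y^n}(p,x,s)-\frac{\partial^n g}{\partial y^n}(p,x,0)\Big|\leq M\,s^{\beta}
\]
uniformly in $(p,x)$, for $s\in[0,\delta]$. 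Inserting this bound and computing the Beta-function integral $\int_0^y (y-s)^{n-1}s^\beta\,ds = B(n,\beta+1)\,y^{n+\beta}$ yields
\[
|g(p,x,y)|\leq \frac{M\,B(n,\beta+1)}{(n-1)!}\,y^{n+\beta}\,,
\]
so the choice $\rho_0 := M\,B(n,\beta+1)/(n-1)!$ gives \eqref{holder}. I expect no real obstacle here beyond correctly identifying the remainder order so that the vanishing conditions eliminate the Taylor polynomial.

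For part 2, the reasoning is analogous but one order higher: since now derivatives up to order $n$ vanish at $y=0$ and the $(n{+}1)$-th derivative exists and is continuous on the compact set $P\times\bar U\times[0,\delta]$, Taylor's formula with integral remainder at order $n$ becomes
\[
g(p,x,y)=\int_0^y \frac{(y-s)^n}{n!}\,\frac{\partial^{n+1} g}{\partial y^{n+1}}(p,x,s)\,ds\,.
\]
By hypothesis $\frac{\partial^{n+1} g}{\partial y^{n+1}}(p,x,0)\leq C<0$ uniformly in $(p,x)\in P\times\bar U$, so the uniform continuity of $\frac{\partial^{n+1} g}{\partial y^{n+1}}$ on the compact set $P\times\bar U\times[0,\delta]$ yields a $\delta_1\in(0,\delta]$ with
\[
\frac{\partial^{n+1} g}{\partial y^{n+1}}(p,x,s)\leq \frac{C}{2}<0\quad\text{for all}\;(p,x)\in P\times\bar U,\;s\in[0,\delta_1]\,.
\]
Plugging this bound into the remainder and integrating, $g(p,x,y)\leq \frac{C}{2\,(n+1)!}\,y^{n+1}$ for $y\in[0,\delta_1]$, so \eqref{holder-2} follows with $\wit\rho_0:=-C/(2\,(n+1)!)>0$.

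The only mildly delicate step is the passage from the pointwise inequality $\frac{\partial^{n+1} g}{\partial y^{n+1}}(p,x,0)\leq C$ to a uniform strict negativity on a strip $[0,\delta_1]$, but the compactness of $P\times\bar U$ combined with continuity makes this routine. No strong monotonicity or dynamical argument is needed; the whole proof is essentially Taylor's theorem plus the uniform estimates dictated by the regularity class.
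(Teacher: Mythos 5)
Your proof is correct and follows essentially the same route as the paper: Taylor's theorem with all lower-order terms killed by~\eqref{derivadas}, the H\"{o}lder/Lipschitz seminorm of $\frac{\partial^n g}{\partial y^n}$ for part 1, and uniform continuity on the compact set $P\times\bar U\times[0,\delta]$ to push the strict negativity of $\frac{\partial^{n+1}g}{\partial y^{n+1}}$ onto a strip $[0,\delta_1]$ for part 2. The only (immaterial) difference is that you use the integral form of the remainder where the paper uses the Lagrange form, which merely changes the explicit constant $\rho_0$.
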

\begin{proof}
The proof relies on Taylor's theorem. In the first situation, looking at \eqref{derivadas}, for any $p\in P$, $x\in \bar U$ and $y\in [0,\delta]$ there is an  $\omega(p,x,y)\in (0,1)$ such that
\[
|g(p,x,y)|=\Big| \frac{1}{n!}\,\frac{\partial^ng}{\partial y^n}(p,x,\omega(p,x,y)y)\,y^n - \frac{1}{n!}\,\frac{\partial^ng}{\partial y^n}(p,x,0)\,y^n\Big| \leq \frac{1}{n!}\, \eta \,y^{n+\beta}
\]
for $\eta>0$ given by the $\beta$-H\"{o}lder/Lipschitz continuity of the  $n^{\rm{th}}$-order partial derivative. Then, it suffices to take $\rho_0=\eta/n!$.
\par
In the second situation, we can take a $0<\delta_1\leq \delta$ such that
$\frac{\partial^{n+1} g}{\partial y^{n+1}}(p,x,y)\leq C/2<0$ for $p\in  P$, $x\in \bar U$ and $y\in [0,\delta_1]$. This time, for any  $p\in P$, $x\in \bar U$ and $y\in [0,\delta_1]$ there is an  $\omega(p,x,y)\in (0,1)$ such that
\[
g(p,x,y)= \frac{1}{(n+1)!}\,\frac{\partial^{n+1}g}{\partial y^{n+1}}(p,x,\omega(p,x,y)y)\,y^{n+1}\leq \frac{1}{(n+1)!}\,\frac{C}{2} \,y^{n+1}\,,
\]
and it suffices to take $\wit \rho_0=-C/(2(n+1)!)>0$.  The proof is finished.
\end{proof}
Just note that when $g$  is $n+1$ times continuously differentiable with respect to $y$ and~\eqref{derivadas} holds, if condition (c2) is assumed so that $g(p,x,y)\leq 0$ for $p\in P$, $x\in\bar U$ and $y\geq 0$, then $\frac{\partial^{n+1} g}{\partial y^{n+1}}(p,x,0)\leq 0$ for $p\in P$ and $x\in \bar U$. This means that condition~\eqref{n+1} is not too restrictive in our context or work.
\begin{teor}\label{teor-coro}
Assume conditions $(c1)$-$(c5)$ on $g$ with $r_0=0$ in $(c5)$. Assume first that $g\in C^{0,0,n+\beta}(P\times \bar U\times [0,\delta])$ for an $n\geq 1$, a $\beta\in (0,1^-]$ and a $\delta>0$ and it satisfies~\eqref{derivadas}. Then:
\begin{itemize}
\item[(i)] If $c(t,p_0)^{n-1+\beta}\in L^1((-\infty,0])$ holds for some $p_0\in P$, then  $b(p_0)\gg 0$.
\item[(ii)] If  $\lambda_P>0$, there is an ergodic measure $\mu$ such that $b(p)\gg 0$ for almost every $p$ with respect to $\mu$.
\end{itemize}
Conversely, assume that $g\in C^{0,0,n+1}(P\times \bar U\times [0,\delta])$ for an $n\geq 1$ and a $\delta>0$, and it satisfies~\eqref{derivadas} and~\eqref{n+1};
the principal spectrum $\Sigma_{{\rm pr}}\subset [0,\infty)$; and the boundary conditions are of either Neumann or Robin type. If $b(p_0)\gg 0$ for some $p_0\in P$, then $c(t,p_0)^n\in L^1((-\infty,0])$.
\end{teor}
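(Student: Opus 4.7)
The strategy for all three parts is to reduce to Theorem~\ref{teor-atractor no trivial} applied to pure-power auxiliary families, with $g$ replaced by $-\rho_0\, y^{n+\beta}$ (i.e.~$\theta=n+\beta$) for (i)-(ii) and by $-\wit\rho_0\, y^{n+1}$ (i.e.~$\theta=n+1$) in (iii); Proposition~\ref{prop-analisis 1} is the bridge that supplies the pointwise comparison between $g$ and these power nonlinearities near $y=0$. Part (ii) then follows immediately from (i): by \eqref{medidas erg} there exists an ergodic measure $\mu$ for which $\ln c(t,p)/t\to\lambda_P>0$ as $t\to-\infty$ $\mu$-almost everywhere, and then $c(t,p)$ decays exponentially in backward time, so the integrability hypothesis of (i) is met for $\mu$-a.e.~$p$.

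For part (i), Proposition~\ref{prop-analisis 1}.1 gives $g(p,x,y)\geq -\rho_0\,y^{n+\beta}$ on $[0,\delta]$. I would mirror the subsolution construction of the proof of Theorem~\ref{teor-atractor no trivial}(i): via Lemma~\ref{lema-odes}(i), the hypothesis $c(t,p_0)^{n-1+\beta}\in L^1((-\infty,0])$ yields a positive entire bounded solution $w$ of the scalar ODE $w'=\tfrac{a(p_0\cdot t)}{n-1+\beta}\,w-\gamma\,w^{n+\beta}$ (for any prescribed $\gamma>0$, after the standard rescaling $w\mapsto \mu w$), where $a\in C(P)$ is the smooth generator cohomologous to $c$. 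Form the candidate $y_s(t,x)=w(t,p_0\cdot(-s),r)\,\wit c(t-s,p_0)\,e(p_0\cdot(t-s))(x)$, with $\wit c$ the 1-dim cocycle associated with the linear family of coefficient $h-\tfrac{a}{n-1+\beta}$ (bounded: $\eta_1\leq \wit c\leq \eta_2$). A direct computation shows that while $y_s$ remains in $[0,\delta]$, $y_s$ is a subsolution of the original PDE along the orbit of $p_0$ provided $\gamma\geq \rho_0\,\eta_2^{n-1+\beta}$; taking $\gamma$ large (and $r$ correspondingly small) shrinks the ODE's absorbing interval and confines $y_s$ to $[0,\delta]$ for every $t\geq 0$. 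Comparison then gives $y_s\leq u$; setting $s=t$ and letting $t\to\infty$, the pullback formula~\eqref{b(p)} transfers the positivity of $w$ to $b(p_0)\gg 0$.

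For the converse (iii) the roles dualize. Proposition~\ref{prop-analisis 1}.2 supplies $g(p,x,y)\leq -\wit\rho_0\,y^{n+1}$ on $[0,\delta_1]$, and the goal is to produce a positive entire bounded solution of the ODE $w'=\alpha\,a(p_0\cdot t)\,w-\gamma\,w^{n+1}$ for suitably chosen $\alpha,\gamma>0$; once obtained, Lemma~\ref{lema-odes}(ii) applies (the hypothesis $\Sigma_{\mathrm{pr}}\subset[0,\infty)$ guarantees that the relevant rescaled linear scalar ODE has nonnegative spectrum) and yields $c(t,p_0)^{n\alpha}\in L^1((-\infty,0])$, which by Proposition~\ref{prop-puntos asintoticos} upgrades to $c(t,p_0)^n\in L^1((-\infty,0])$ as soon as $\alpha\leq 1$. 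The construction parallels (i): build $y_s$ analogously, and verify that $y_s$ is now a \emph{super}solution of the original PDE on $\{y_s\leq\delta_1\}$ provided $\gamma\leq \wit\rho_0\,(\eta_1 c_\ast)^n$, where $c_\ast=\min_{p\in P,\,x\in\bar U}e(p)(x)>0$ --- this strict positivity being exactly why Neumann or Robin boundary conditions are required. Confinement $y_s\leq\delta_1$ is then secured by picking $\alpha>0$ sufficiently small relative to $\gamma$ so that the ODE's absorbing interval $[0,(\alpha\|a\|_\infty/\gamma)^{1/n}]$ lies inside $[0,\delta_1/\eta_2]$. With these choices comparison yields $y_s\geq u$; setting $s=t$, combining with the pullback formula~\eqref{b(p)} (for $r$ large) and the hypothesis $b(p_0)\gg 0$ rules out $w(t,p_0\cdot(-t),r)\to 0$, hence the ODE's pullback attractor at $p_0$ is strictly positive, supplying the needed entire bounded solution.

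The main technical obstacle is the simultaneous calibration of $\alpha$ and $\gamma$ in (iii): the supersolution inequality bounds $\gamma$ from above, whereas the confinement bounds $\alpha/\gamma$ from above, so the two requirements pull the parameters in independent directions and admit a common solution. A secondary concern will be verifying that the pullback of the original PDE with initial data $r\,\wit c(-t,p_0)\,e(p_0\cdot(-t))$ converges to $b(p_0)$; this uses that with Neumann or Robin boundary conditions $\wit c\,e(p)$ is trapped between two strictly positive multiples of any fixed strongly positive vector, so the monotone pullback argument underlying~\eqref{b(p)} applies uniformly.
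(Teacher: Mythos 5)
Your overall plan---reduce to Theorem~\ref{teor-atractor no trivial} via the Taylor bounds of Proposition~\ref{prop-analisis 1}, and get (ii) from (i) through~\eqref{medidas erg}---matches the paper's, and part~(ii) is exactly right. But the paper does \emph{not} redo the sub/supersolution construction of Theorem~\ref{teor-atractor no trivial} for the general nonlinearity, as you propose. It first extends the pointwise bounds from $[0,\delta]$ to the whole range $[0,r]$ needed by the pullback formula~\eqref{b(p)}: taking $\rho\geq\rho_0$ \emph{large} gives $g\geq-\rho\,y^{n+\beta}$ on $[0,r]$, and taking $\wit\rho\leq\wit\rho_0$ \emph{small} gives $g\leq-\wit\rho\,y^{n+1}$ on $[0,r]$ (possible because $-g>0$ on the compact set $P\times\bar U\times[\delta_1,r]$ by (c2) and (c5) with $r_0=0$). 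It then compares the full semiflows (Theorem~3.1 of~\cite{calaobsa}) to obtain $b\geq b_\theta$ with $\theta=n+\beta$, respectively $b\leq b_\theta$ with $\theta=n+1$, for the pure-power boundary maps, and only then invokes Theorem~\ref{teor-atractor no trivial} as a black box. This removes any need to confine trajectories to the region where the Taylor bound holds, which is precisely where your argument breaks down.

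The gap is in your converse. First, the parameter $\alpha$ is not free: for the modified cocycle $\wit c$ of the family with coefficient $h-\alpha a$ to satisfy $\eta_1\leq\wit c\leq\eta_2$ (which you use both in the supersolution inequality and in the confinement), note that $\wit c(t,p_0)=c(t,p_0)\,e^{-\alpha\int_0^t a}\sim c(t,p_0)^{1-n\alpha}$ up to bounded factors, so boundedness forces $\alpha=1/n$, exactly the paper's normalization. With $\alpha$ pinned, shrinking the absorbing interval into $[0,\delta_1/\eta_2]$ forces $\gamma$ \emph{large}, while your supersolution inequality forces $\gamma\leq\wit\rho_0(\eta_1 c_*)^n$, i.e.\ $\gamma$ \emph{small}; these are genuinely incompatible in general, not merely "pulling in independent directions." Second, the absorbing interval only confines after a transient unless $r$ is already small, whereas identifying $\lim_t u(t,p_0{\cdot}(-t),\,\cdot\,)$ with $b(p_0)$ via~\eqref{b(p)} requires initial data dominating $\bar r e_0$ with $\bar r$ large; for the non-sublinear semiflow $u$ the pullback limit from small data need not reach $b(p_0)$. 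A symptom that the scheme cannot work as stated: were $\alpha$ genuinely free to be small, your chain would yield $c(t,p_0)^{\varepsilon}\in L^1((-\infty,0])$ for every $\varepsilon>0$, contradicting Example~\ref{eje p_0}(i) with $\theta=2$. Your part~(i) suffers only a mild version of this tension (both constraints on $\gamma$ are lower bounds, and the pullback limit of the sublinear scalar ODE is independent of $r>0$, a point you should still verify since~\eqref{b odes} is stated for $r$ large), so it is repairable; but the clean fix for both directions is the paper's global comparison.
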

\begin{proof}
The proof relies on standard comparison methods in monotone skew-product semiflows.
As usual, denote by  $\tau(t,p,z)=(p{\cdot}t,u(t,p,z))$  the  skew-product semiflow induced by~\eqref{pdefamilynl}.
If $g\in C^{0,0,n+\beta}(P\times \bar U\times [0,\delta])$ satisfies~\eqref{derivadas}, Proposition~\ref{prop-analisis 1}.1 implies that there exists a $\rho_0>0$ such that~\eqref{holder} holds. Let us fix an $r>\delta>0$ big enough so that formula~\eqref{b(p)} is applicable for $\tau$ and also for the skew-product semiflow induced by the solutions of~\eqref{chafee-infante} with nonlinear term $-y^{n+\beta}$ for $y\geq 0$. In particular this implies that formula~\eqref{b(p)} also works for the skew-product semiflow $\tau_\theta(t,p,z)=(p{\cdot}t,u_\theta(t,p,z))$ associated with the family~\eqref{chafee-infante} with $\theta=n+\beta>1$, and any $\rho>1$. Now, we fix the appropriate value of $\rho$. It is easy to choose a $\rho\geq \max\{\rho_0,1\}$ so that $g(p,x,y)\geq -\rho\,y^{n+\beta}$ for $p\in P$, $x\in \bar U$ and $y\in [0,r]$.  Then, by Theorem~3.1 in~\cite{calaobsa} (which also applies with Dirichlet boundary conditions), $u(t,p{\cdot}(-t),r e_0)\geq u_\theta(t,p{\cdot}(-t),r e_0)$ for $t\geq 0$, $p\in P$ and $e_0$ the one in~\eqref{b(p)}. Thus, taking limits as $t\to\infty$, $b(p)\geq b_\theta(p)$ for $p\in P$, for $b_\theta$ the upper boundary map of the global attractor for $\tau_\theta$.
Then, to prove (i) just apply Theorem~\ref{teor-atractor no trivial} to ensure that $b_\theta(p_0)\gg 0$, so that also $b(p_0)\gg 0$.
As for (ii), having in mind relations~\eqref{medidas erg}, we can take an ergodic measure $\mu$ such that $\lim_{t\to -\infty} \ln c(t,p)/t=\lambda_P>0$ for almost every $p$ with respect to  $\mu$. Then, for all such $p$, $c(t,p)^{\beta'}\in L^1((-\infty,0])$ for any $\beta'>0$. Taking $\beta'=n-1+\beta$, the result just proved in (i) applies.
\par
Now we focus on the converse result. Proposition~\ref{prop-analisis 1}.2 implies that there exist a $0<\delta_1<\delta$ and a  $\wit \rho_0>0$ such that~\eqref{holder-2} holds. Fix $r>\delta_1>0$ big enough so that formula~\eqref{b(p)} is applicable. This time, it is easy to check that we can take a  $0<\wit \rho\leq \wit \rho_0$ sufficiently small so that $g(p,x,y)\leq -\wit\rho\,y^{n+1}$ for $p\in P$, $x\in \bar U$ and $y\in [0,r]$. Here $\tau_\theta(t,p,z)=(p{\cdot}t,u_\theta(t,p,z))$ is the skew-product semiflow associated with~\eqref{chafee-infante} for this constant $\wit \rho>0$ and $\theta=n+1$. Once more applying Theorem~3.1 in~\cite{calaobsa}, now $u(t,p{\cdot}(-t),r e_0)\leq u_\theta(t,p{\cdot}(-t),r e_0)$ for $t\geq 0$ and $p\in P$. It might be necessary to take a bigger $\wit r>r$ so that formula~\eqref{b(p)} holds for $b_\theta$. In that case, the monotonicity of $\tau_\theta$ is applied to get that $u(t,p{\cdot}(-t),r e_0)\leq u_\theta(t,p{\cdot}(-t),r e_0)\leq u_\theta(t,p{\cdot}(-t),\wit r e_0)$ for $t\geq 0$  and $p\in P$. Taking limits as $t\to\infty$, $b(p)\leq b_\theta(p)$ for any $p\in P$. If $b(p_0)\gg 0$ for some $p_0\in P$, also $b_\theta(p_0)\gg 0$ and  then Theorem~\ref{teor-atractor no trivial} implies that $c(t,p_0)^n\in L^1((-\infty,0])$, as we wanted to prove.
\end{proof}
We remark that a similar result to the one in (ii) can be found in  Theorem~A~2) in Mierczy{\'n}ski and Shen~\cite{mish04} for sublinear parabolic scalar equations. But note that the context above is much more general. Also note that, by Proposition~\ref{prop-puntos asintoticos}, the bigger $\beta>0$ is, the easier it is that $c(t,p_0)^\beta\in L^1((-\infty,0])$.
\par
To finish this section, we state the counterpart of Theorem~\ref{teor-principal spectrum}, collecting  the information we have on the structure of the attractor for a family~\eqref{pdefamilynl} of purely dissipative problems, depending on the  principal spectrum.
\begin{teor}\label{teor-purely diss final}
Assume conditions $(c1)$-$(c5)$ on $g$ with $r_0=0$ in $(c5)$. Then:
\begin{itemize}
\item[(s1)] If $\alpha_P\leq\lambda_P < 0$, then $\A=P\times \{0\}$ and it is uniformly exponentially stable.
\item[(s2)]  If $\alpha_P<0=\lambda_P$, then $P_0=\{p\in P\mid b(p)=0\}$ is a set of complete measure.
\item[(s3)] If  $\alpha_P < 0 <\lambda_P$, then there exists an ergodic measure $\nu$ such that $b(p)=0$ for almost every $p$ with respect to $\nu$;  and, if besides $g\in C^{0,0,1+\beta}(P\times \bar U\times [0,\delta])$ for a $\beta\in (0,1^-]$ and a $\delta>0$, there exists an ergodic measure $\mu$  such that $b(p)\gg 0$ for almost every $p$ with respect to $\mu$. The dynamical description in Theorem~$\ref{teor-sublineal purely-dis}$~{\rm(ii)} applies for all such $p$, provided that also the sublinear condition $(c6)$ holds.
\item[(s4)] If  $\alpha_P = 0 \leq \lambda_P$, then there exists an ergodic measure $\nu$  such that $b(p)=0$ for almost every $p$ with respect to $\nu$. Besides, if $\lambda_P=0$, $P_0$ is a set of complete measure, whereas if $\lambda_P>0$ and $g\in C^{0,0,1+\beta}(P\times \bar U\times [0,\delta])$ for a $\beta\in (0,1^-]$ and a $\delta>0$, there is an ergodic measure $\mu$ as in {\rm(s3)}.
\item[(s5)] If $0<\alpha_P\leq  \lambda_P$, then the semiflow $\tau$ is  uniformly persistent in the interior of the positive cone and $b$ is uniformly strongly positive. If besides $(c6)$ is satisfied, the dynamical description in Theorem~$\ref{teor-sublineal purely-dis}$~{\rm(ii)} applies for all $p$, and $b$ is continuous.
\end{itemize}
\end{teor}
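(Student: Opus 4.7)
The plan is to assemble the five cases by invoking the results already established in this section. The heavy lifting was done in Theorems~\ref{teor-purely diss 1}, \ref{teor-sublineal purely-dis} and~\ref{teor-coro}; what remains is to match each case with the appropriate ingredient and, in (s3)--(s4), to verify the hypotheses of Theorem~\ref{teor-sublineal purely-dis}(ii) to obtain the pullback description.

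First I would dispatch the easy items. For (s1), Theorem~\ref{teor-purely diss 1}(i) gives $\A=P\times\{0\}$ and its uniform exponential stability, since $\lambda_P<0$. For (s2), Theorem~\ref{teor-purely diss 1}(ii) applies verbatim, as $\lambda_P=0$ is its only assumption. For (s5), Theorem~\ref{teor-purely diss 1}(iv) delivers the uniform persistence and the uniform lower bound $b(p)\geq z_0\gg 0$; adding (c6), the continuity of $b$ and the complete dynamical description come from Theorem~\ref{teor-sublineal purely-dis}(iii).

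Cases (s3) and (s4) need more assembly. The existence of an ergodic measure $\nu$ with $b(p)=0$ $\nu$-almost everywhere is Theorem~\ref{teor-purely diss 1}(iii), using $\alpha_P\leq 0$; in (s4) with $\lambda_P=0$, the same Theorem~\ref{teor-purely diss 1}(ii) as in (s2) upgrades $P_0$ to a set of complete measure. For the existence of an ergodic measure $\mu$ with $b(p)\gg 0$ $\mu$-almost everywhere, under the regularity $g\in C^{0,0,1+\beta}$, I would quote Theorem~\ref{teor-coro}(ii), whose argument picks $\mu$ from~\eqref{medidas erg} at $\lambda_P>0$ and then exploits $c(t,p)^{\beta}\in L^1((-\infty,0])$ for every $\beta>0$ on the $\mu$-generic orbit.

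It remains, under (c6), to check the assumption $\limsup_{t\to\infty}\|b(p\cdot t)\|>0$ required by Theorem~\ref{teor-sublineal purely-dis}(ii). My plan is this: the set $\{p\in P\mid b(p)\gg 0\}$ is invariant and has full $\mu$-measure, and since $p\mapsto\|b(p)\|$ is Borel measurable one can choose $\varepsilon>0$ so that $K:=\{p\mid \|b(p)\|\geq\varepsilon\}$ has positive $\mu$-measure. Applying Birkhoff's ergodic theorem (or merely Poincar\'{e} recurrence) to $\chi_K$, the orbit of $\mu$-almost every $p$ meets $K$ along a sequence $t_n\uparrow\infty$, whence $\limsup_{t\to\infty}\|b(p\cdot t)\|\geq\varepsilon$; combined with $b(p)\gg 0$, this triggers Theorem~\ref{teor-sublineal purely-dis}(ii) and yields the stated pullback description. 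I expect this lift from ``fiberwise positivity $\mu$-a.e.'' to ``recurrent quantitative positivity along forward orbits'' to be the only step beyond routine bookkeeping; the possible mild obstacle is the fact that $b$ is only semicontinuous, but Borel measurability together with the invariance of the superlevel sets of $\|b\|$ under the flow are enough to run the recurrence argument.
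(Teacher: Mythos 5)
Your proposal is correct and follows essentially the same route as the paper: all five cases are assembled from Theorems~\ref{teor-purely diss 1}, \ref{teor-sublineal purely-dis} and~\ref{teor-coro} exactly as you indicate, and the hypothesis $\limsup_{t\to\infty}\|b(p{\cdot}t)\|>0$ needed for Theorem~\ref{teor-sublineal purely-dis}~(ii) is obtained by applying Birkhoff's ergodic theorem to the indicator of a positive-$\mu$-measure set on which $\|b\|$ is bounded below. The only deviations are cosmetic: the paper produces that set via Lusin's theorem and the regularity of $\mu$, whereas you take a superlevel set $\{p\mid \|b(p)\|\geq\varepsilon\}$ of the measurable map $\|b\|$ directly, which works just as well — note only that, contrary to your closing remark, these superlevel sets are not flow-invariant (only $P_+$ is), and that Poincar\'{e} recurrence alone would give the conclusion only for a.e.\ $p$ in the set rather than for $\mu$-a.e.\ $p\in P$, so Birkhoff is the tool actually needed; neither point affects your argument.
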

\begin{proof}
(s1), (s2),  the existence of $\nu$ in (s3) and (s4),  and part of (s5) have been proved in the general purely dissipative setting in Theorem~\ref{teor-purely diss 1}. Sublinearity is required in (s5) in order to apply Theorem~\ref{teor-sublineal purely-dis}~(iii).
\par
The existence of $\mu$ in (s3) or (s4) when $\lambda_P>0$ and a $\beta$-H\"{o}lder/Lipschitz continuous variation is assumed on the first partial derivative of $g$ with respect to $y$, has been proved in Theorem~\ref{teor-coro}~(ii): just note that condition~\eqref{derivadas} with $n=1$ reduces to condition (c1).  In this case $\mu(P_+)=1$ for the invariant set $P_+=\{p\in P\mid b(p)\gg 0\}$.
Since   $\mu$ is a regular Borel measure,  we can apply  Lusin's theorem to the semicontinuous, thus measurable function  $\|b\|:P\to \R$, $p\mapsto \|b(p)\|$  to affirm that, fixed an $\varepsilon>0$, there exists a continuous map $\wit b:P\to \R$  such that $\mu(\{p\in P\mid \|b(p)\|=\wit b(p)\})>1-\varepsilon$. Since $\mu$ is regular, we can take  a compact set $E_0\subset P_+\cap \{p\in P\mid \|b(p)\|=\wit b(p)\}$ with $\mu(E_0)>0$. In particular, there exists an $\eta_0> 0$ such that $\inf\{ \|b(p)\|\mid p\in E_0\}\geq \eta_0$.
Finally, an application of Birkhoff's ergodic theorem to the characteristic function of $E_0$ implies that for almost every $p\in P$ with respect to $\mu$ there exists a real sequence $(t_n)_n\uparrow \infty$ such that $p{\cdot}t_n\in E_0$ for every $n\geq 1$. That is, $\|b(p{\cdot}t_n)\|\geq \eta_0$ for $n\geq 1$ and $\limsup_{t\to \infty} \|b(p{\cdot}t)\|>0$,  so that whenever condition (c6) is assumed, Theorem~\ref{teor-sublineal purely-dis}~(ii) applies.
The proof is finished.
\end{proof}
%%%%%%%%%%%%%%%%%%%%%%%%%%%%%%%%%%%%%%%%%%%%%%%%%%%%%%%%%%%%%%%%%%%%%%%%%%%%%%%%%%%%%%%%%%%%%%%%%%%%%%%%%%%%%%%%%
%%%%%%%%%%%%%%%%%%%%%%%%%%%%%%%%%%%%%%%%%%%%%%%%%%%%%%%%%%%%%%%%%%%%%%%%%%%%%%%%%%%%%%%%%%%%%%%%%%%%%%%%%%%%
\section{Some illustrative examples}\label{sec-examples}
Note that the disadvantage of condition \eqref{a1} is that normally we do not know the explicit expression for the 1-dim linear cocycle $c(t,p)$.
In this last section we include some easy examples in the linear-dissipative and the purely dissipative contexts, in which we can calculate everything and draw nice conclusions.
\begin{eje}\label{eje p_0}
Consider the odd, bounded and uniformly continuous map $p_0:\R\to\R$ given by
\[
p_0(t)=\left\{\begin{array}{ll} -\des\frac{2}{t}\,, & \text{if} \;\, t\leq -1\\[0.25cm]
-2\,(t-1)-2\,, & \text{if} \;\, -1\leq t \leq 1\\[0.1cm]
-\des\frac{2}{t}\,, & \text{if} \;\, t\geq 1
\end{array}\right.
\]
which satisfies that the continuous spectrum for $v'=p_0(t)\,v$ is $\Sigma=\{0\}$, and also $e^{\int_0^t \!p_0}\in L^1((-\infty,0])$. Besides, the integral $\int_0^t -p_0=1+2\,\ln t$ grows slower than any $t^\beta$ ($0<\beta<1$) as $t\to\infty$ (this comment comes in relation with Remark~\ref{nota-odes}.3). Also note that the hull of $p_0$ is the compact and connected set $P$ formed by all the translated maps of $p_0$, which we denote as usual by $p_0{\cdot}t$, $t\in \R$  plus the identically null map. Thus, the shift flow on $P$ is not a minimal flow, the only minimal set in it is $\{0\}$ and $\delta_{\{0\}}$, the  Dirac measure at $p=0$, is the only ergodic measure.
\par
(i) {\it A purely dissipative example with $\Sigma_{{\rm pr}}=\{0\}$ and a homoclinic orbit\/}. For $\rho>0$, $\theta>1$ and $a:P\to \R$, $p\mapsto a(p)=p(0)$, let $\tau (t,p,z)=(p{\cdot}t,u(t,p,z))$ be the skew-product semiflow induced by the solutions of the family of purely dissipative problems for $p\in P$, given in the positive cone by
\begin{equation}\label{pdefamily}
\left\{\begin{array}{l} \des\frac{\partial y}{\partial t}  =
 \Delta \, y+(\gamma_0+a(p{\cdot}t))\,y-\rho\,y^\theta\,,\quad t>0\,,\;\,x\in U, \\[.2cm]
By:=\bar\alpha(x)\,y+\kappa\,\des\frac{\partial y}{\partial n} =0\,,\quad  t>0\,,\;\,x\in \partial U,
\end{array}\right.
\end{equation}
where $\gamma_0$ is the first eigenvalue of the BVP~\eqref{bvp} with associated normalized eigenfunction $e_0\gg 0$. It is easy to check that the principal bundle is given by $X_1(p)=\langle e_0\rangle$ for all $p\in P$ and $c(t,p)=e^{\int_0^t a(p{\cdot}s)ds}$ is the 1-dim linear cocycle, so that the principal spectrum is the degenerate interval $\{0\}$, i.e., $\alpha_P=\lambda_P=0$ and Theorem~\ref{teor-purely diss final}~(s4) applies. Thus, there exists an ergodic measure $\nu$ in $P$ such that $b(p)=0$ for almost all $p$ with respect to $\nu$. Since necessarily $\nu=\delta_{\{0\}}$, it follows that $b(0)=0$. By Proposition~\ref{prop-0 o positivo}, $p=0$ is a point of continuity for $b$. Since $\lim_{t\to \pm\infty} p_0{\cdot}t=0$, then  $\lim_{t\to \pm\infty} b(p_0{\cdot}t)=0$.  Thus, looking at $(0,b(0))=(0,0)$ as an equilibrium point for the skew-product semiflow, $\{(p_0{\cdot}t,b(p_0{\cdot}t))\mid t\in\R\}$ is a homoclinic orbit.
\par
Now, for the initial map $p_0\in P$ we can calculate the values $\theta>1$ for which $c(t,p_0)^{\theta-1}\in L^1((-\infty,0])$, in order to guarantee that $b(p_0)\gg 0$ by Theorem~\ref{teor-atractor no trivial}~(i). Some routine calculations show that the previous condition holds if and only if $\theta>3/2$. Therefore, for any $\theta>3/2$, $b(p_0)\gg 0$, and thus also $b(p_0{\cdot}t)\gg 0$ for any $t\in \R$, and the homoclinic orbit is nontrivial. However, for $1<\theta\leq 3/2$, at least with Neumann or Robin boundary conditions, by Theorem~\ref{teor-atractor no trivial}~(ii) we know that $b(p_0)=0$ and the attractor is the trivial one, $\A=P\times\{0\}$.
\par
(ii) {\it A linear-dissipative example with $\Sigma_{{\rm pr}}=\{0\}$ and strongly positive upper boundary map, not bounded away from $0$ and discontinuous\/}. Based on the previous purely dissipative example, we  build a linear-dissipative family of problems for $p\in P$ sharing the linear term, and thus the associated 1-dim cocycle $c(t,p)$,
\begin{equation*}
\left\{\begin{array}{l} \des\frac{\partial y}{\partial t}  =
 \Delta \, y+(\gamma_0+a(p{\cdot}t))\,y+g(y)\,,\quad t>0\,,\;\,x\in U, \\[.2cm]
By:=\bar\alpha(x)\,y+\kappa\,\des\frac{\partial y}{\partial n} =0\,,\quad  t>0\,,\;\,x\in \partial U,
\end{array}\right.
\end{equation*}
for a map $g:\R\to\R$ of class $C^1$ which satisfies (c1)-(c5) with $r_0>0$ in (c5) and such that $ g(y)\geq -\rho\,y^\theta $ for $y\geq 0$, for $\rho>0$ and $\theta>3/2$. Then, denoting by $\wit \tau (t,p,z)=(p{\cdot}t,\wit u(t,p,z))$ the induced skew-product semiflow and by $\wit b(p)$ the upper boundary map of the attractor,  since $\wit u(t,p,z) \geq u(t,p,z)$ for $t\geq 0$, $p\in P$ and  $z\geq 0$,  then $\wit b(p)\geq b(p)$ for $p\in P$ (once more, see~\eqref{b(p)}), so that in particular  $\wit b(p_0{\cdot}t)\gg 0$ for any $t\in \R$. This time also $\wit b(0)\gg 0$, since for $p=0$, $c(t,0)\equiv 1$, and Proposition~\ref{prop-equivalentes} applies. Besides, taking $\beta>0$ such that $\wit b(p_0)\leq \beta\,e_0$ and comparing the solutions of the nonlinear problem and the linear one, $\wit b(p_0{\cdot}t)=\wit u(t,p,\wit b(p_0))\leq \phi(t,p)\,\beta\,e_0=\beta\, c(t,p_0)\,e_0=\beta\, e^{-1-2\ln t}e_0\to 0$ as $t\to \infty$. In all, we have an example in which $\wit b(p)\gg 0$ for any $p\in P$, but $\wit b$ is not uniformly strongly above $0$ and it is discontinuous at $p=0$.
\end{eje}
Next, inspired in an example in Mierczy{\'n}ski and Shen~\cite{mish} (p.~203-204), we modify the map $p_0(t)$ in the previous example in order to obtain a heteroclinic orbit in the global attractor. For convenience, we restrict ourselves to the case of Chafee-Infante equations with  Neumann boundary conditions.
\begin{eje}\label{eje a_0}
Consider the bounded and uniformly continuous map $p_1:\R\to\R$ given by
\[
p_1(t)=\left\{\begin{array}{rl} -\des\frac{2}{t}\,, & \text{if} \;\, t\leq -1\\[0.25cm]
2\,, & \text{if} \;\, t\geq -1\,.
\end{array}\right.
\]
This time, the hull of $p_1$ is the compact and connected set $P$ formed by all the translated maps of $p_1$, which we denote by $p_1{\cdot}t$, $t\in \R$,  plus the identically null map and the identically equal to $2$ map. More precisely, $p_1{\cdot}t\to 0$ as $t\to -\infty$ and $p_1{\cdot}t\to 2$ as $t\to \infty$. Again, the shift flow on $P$ is not minimal and the only minimal sets are $\{0\}$ and $\{2\}$, as well as the only ergodic measures are the Dirac measures  $\delta_{\{0\}}$ and $\delta_{\{2\}}$. Besides, for $a:P \to \R$, $p\mapsto a(p)=p(0)$, the continuous spectrum or Sacker-Sell spectrum of the family of linear equations over the hull, $v'= a(p{\cdot}t)\, v$, $p\in P$ is the interval $[0,2]$.
\par
(i)  {\it A purely dissipative family with $\Sigma_{{\rm pr}}=[0,1]$ and a heteroclinic orbit\/}.
Let us consider the family for $p\in P$,
\begin{equation*}
\left\{\begin{array}{l} \des\frac{\partial y}{\partial t}  =
 \Delta \, y+\frac{1}{2}\,a(p{\cdot}t)\,y-\frac{1}{2}\,y^3\,,\quad t>0\,,\;\,x\in U,
  \\[.2cm]
\des\frac{\partial y}{\partial n} =0\,,\quad  t>0\,,\;\,x\in \partial U,
\end{array}\right.
\end{equation*}
and denote by $\tau$ the induced skew-product semiflow. Note that in the Neumann case $\gamma_0=0$ and the associated 1-dim cocycle is $c(t,p)=e^{\int_0^t \frac{1}{2}\,a(p{\cdot}s)ds}$ with principal spectrum $\Sigma_{{\rm pr}}=[0,1]$. Besides, for $p=0$ and $p=2$ we have two equilibrium points, namely $(0,0)$ and $(2,\sqrt{2})$, for the skew-product semiflow. In particular this implies that  $b(2)\gg 0$. In what respects to $p_1$, since it coincides with the map $p_0$ in Example~\ref{eje p_0} for times $t\leq -1$, $e^{\int_0^t \!p_1}\in L^1((-\infty,0])$. Then, as $c(t,p_1)^2=e^{\int_0^t \!p_1}$, Theorem~\ref{teor-atractor no trivial}~(i) with $\theta=3$ implies that $b(p_1)\gg 0$, so that $b(p_1{\cdot}t)\gg 0$ for $t\in\R$.  Besides, Theorem~\ref{teor-purely diss final}~(s4) applies and, with the former description, it must be $\nu=\delta_{\{0\}}$ and $b(0)=0$. In particular $p=0$ is a continuity point for $b$ and $\lim_{t\to -\infty} (p_1{\cdot}t,b(p_1{\cdot}t))=(0,0)$. Finally,  $\lim_{t\to \infty} (p_1{\cdot}t,b(p_1{\cdot}t))=(2,\sqrt{2})$, so that the orbit $\{(p_1{\cdot}t,b(p_1{\cdot}t))\mid t\in\R\}$ connects both equilibria. To see this, we turn to the explicit expression of $b(p_1{\cdot}t)$ which can be extracted from the proof of Lemma~\ref{lema-odes} and we do some routine calculations.
\par
(ii) {\it A linear-dissipative example with $\Sigma_{{\rm pr}}=[0,1]$ and strongly positive upper boundary map\/}. As in Example \ref{eje p_0} (ii), we now build a linear-dissipative family for $p\in P$, majorating the previous purely dissipative family in the positive cone and sharing its linear part, so that in particular $\Sigma_{\rm{pr}}=[0,1]$. Then, if $\wit b(p)$ denotes the upper boundary map of the attractor, $\wit b(p)\gg 0$ for any $p\in P$: the only doubt might be for $\wit b(0)$, but the cocycle for $p=0$ is  $c(t,0)\equiv 1$, and Proposition~\ref{prop-equivalentes} applies in the linear-dissipative setting.
\end{eje}
To finish, we present a final example to show that in the purely dissipative case, with principal spectrum $\Sigma_{{\rm pr}}=[\alpha_P,0]$ with $\alpha_P<0$, the attractor might be the trivial one $\A=P\times \{0\}$: see Theorem~\ref{teor-purely diss final}~(s2) and compare with Theorem~\ref{teor-principal spectrum}~(s2), saying that this cannot happen in the linear-dissipative context.
\begin{eje}\label{eje p_2}
 {\it A purely dissipative family with $\Sigma_{{\rm pr}}=[-1,0]$ and a trivial attractor\/}.
Consider the bounded and uniformly continuous map $p_2:\R\to\R$ given by
\[
p_2(t)=\left\{\begin{array}{rl} \des\frac{1}{t}\,, & \text{if} \;\, t\leq -1\\[0.25cm]
-1\,, & \text{if} \;\, t\geq -1\,.
\end{array}\right.
\]
This time, the hull of $p_2$ is the compact and connected set $P$ formed by all the translated maps of $p_2$,  $\{p_2{\cdot}t\mid t\in \R\}$,  plus the identically null map and the identically equal to $-1$ map. Moreover, $p_2{\cdot}t\to 0$ as $t\to -\infty$ and $p_2{\cdot}t\to -1$ as $t\to \infty$. Again, the shift flow on $P$ is not minimal and the only minimal sets are $\{0\}$ and $\{-1\}$, as well as the only ergodic measures are the Dirac measures  $\delta_{\{0\}}$ and $\delta_{\{-1\}}$. Besides, for $a:P \to \R$, $p\mapsto a(p)=p(0)$, the Sacker-Sell spectrum of the family of linear equations over the hull, $v'= a(p{\cdot}t)\, v$, $p\in P$ is the interval $[-1,0]$.
\par
For $\rho>0$ and $\theta>1$, let $\tau$ be the skew-product semiflow induced by the solutions of the family of purely dissipative problems for $p\in P$, given in the positive cone by \eqref{pdefamily},
where $\gamma_0$ is the first eigenvalue of the BVP~\eqref{bvp}. Recall that then $c(t,p)=e^{\int_0^t a(p{\cdot}s)ds}$ is the associated 1-dim linear cocycle, so that the principal spectrum is
$\Sigma_{{\rm pr}}=[-1,0]$. Let us see that $b(p)=0$ for all $p\in P$. First of all note that for $p=0$, the restricted family over the minimal set $M=\{0\}$ which is uniquely ergodic, has trivial bounded cocycle $c(t,0)=1$, $t\in\R$ and null Lyapunov exponent. Thus, by Proposition~5.12 in~\cite{calaobsa} we know that $b(0)=0$.
\par
Finally, it is straightforward that both  $c(t,-1)$ and $c(t,p_2)$ tend to $\infty$ as $t\to -\infty$. Then, by Remark \ref{nota-1 purely},
$b(-1)= b(p_2)=0$, and then also  $b(p_2{\cdot}t)=0$ for $t\in \R$ and we are done.
\end{eje}
%%%%%%%%%%%%%%%%%%%%%%%%%%%%%%%%%%%%%%%%%%%%%%%%%%%%%%%%%%%%%%%%%%%%%%%%%%%%%%%%%%%%%%%

\end{document}